\newtheorem{thm}{Theorem}
\newtheorem{cor}{Corollary}[section]
\newtheorem{prop}{Proposition}[section]
\newtheorem{conj}[thm]{Conjecture}
\theoremstyle{remark}
\newtheorem{rmk}{Remark}[section]
\theoremstyle{definition}
\newtheorem{defn}{Definition}[section]
\numberwithin{equation}{section}
\newtheorem*{theorem*}{Theorem}
\newtheorem{theorem}{Theorem}[section]
\newtheorem{lemma}[theorem]{Lemma}
\def\p{\partial}
\def\R{\mathbb{R}}
\def\C{\mathbb{C}}
\def\N{\mathbb{N}}
\def\l{\lambda}
\def\cD{\mathcal D}
\def\cE{{\mathcal E}}
\def\cH{{\mathcal H}}
\def\cK{{\mathcal K}}
\def\cL{{\mathcal L}}
\def\cO{{\mathcal O}}
\begin{document}

\title{On Calabi's Extremal metrics and properness}
\author{Weiyong He}
\address{Department of Mathematics, University of Oregon, Eugene, OR 97403. }
\email{whe@uoregon.edu}

\begin{abstract}
In this paper we  extend recent breakthrough of Chen-Cheng \cite{CC1, CC2, CC3} on existence of constant scalar K\"ahler metric on a compact K\"ahler manifold to Calabi's extremal metric. Our argument follows \cite{CC3} and there are no new a prior estimates needed, but rather there are necessary modifications adapted to the extremal case. 
We prove that there exists an extremal metric with extremal vector $V$  if and only if the modified Mabuchi energy is proper, modulo the action the subgroup in the identity component of automorphism group which commutes with the flow of $V$. 
We introduce two essentially equivalent notions, called \emph{reductive properness} and \emph{reduced properness}. We observe that one can test reductive properness/reduced properness only for invariant metrics. We prove that existence of an extremal metric is equivalent to reductive properness/reduced properness of the modified Mabuchi energy. 
\end{abstract}

\maketitle

\section{Introduction}
Recently Chen and Cheng \cite{CC1, CC2, CC3} have made surprising breakthrough in the study of constant scalar curvature (csck) on a compact K\"ahler manifold $(M, [\omega], J)$, with a fixed K\"ahler class $[\omega]$. A K\"ahler metric with constant scalar curvature are special cases of Calabi's extremal metric \cite{Ca1, Ca2}, which was introduced by E. Calabi in 1980s to find  a canonical representative within the K\"ahler class $[\omega]$.
In \cite{Don97}, S. K. Donaldson proposed a beautiful program in K\"ahler geometry to attack Calabi's renowned problem of existence of csck metrics in terms of the geometry of space of K\"ahler potentials $\cH$ with the Mabuchi metric \cite{M1}, 
\[\cH:=\{\phi\in C^\infty(M): \omega_\phi=\omega+\sqrt{-1} \p\bar\p \phi>0\}.\]
Chen's resolution \cite{chen01} of Donaldson's conjecture that $\cH$ is a metric space, where any two points in $\cH$ can be joint be a unique $C^{1, 1}$ geodesic would play an important role in Donaldson's program.
An important notion is Mabuchi's $\cK$-energy \cite{M1, M2}, which is convex along $C^{1, 1}$ geodesics in $\cH$ \cite{BB} and csck is a critical point of $\cK$-energy.  Mimicking finite dimensional setting,  criterions of existence of a critical point of a convex functional have been proposed in this infinitely dimensional setting, which include Donaldson's infinite dimensional geometric invariant theory (GIT), which leads to his conjecture \cite{Don97} by examining the derivatives of $\cK$-energy at infinity of geodesic rays and Chen-Donaldson's geodesic stability conjecture,  and Chen's conjecture  \cite{chen01} of properness of $\cK$-energy in terms of distance function of $\cH$. We should also mention the properness conjecture made by G. Tian \cite{Tian01}, where  the properness of $\cK$-energy was proposed in terms of Aubin's $J$-functional and Tian's conjecture was partly motivated by conformal geometry. For a more detailed historic review for existence of csck and extremal metric, including the discussion of Yau-Tian-Donaldson stability conjecture for projective manifolds, we refer to \cite{PS} and \cite{CC2}[Introduction]. 

Recently Berman-Darvas-Lu \cite{BDL, BDL2} proved that the existence of csck implies the properness of $\cK$-energy (and K-polystability). This work relies on recent developments in K\"ahler geometry, including Darvas' deep understanding \cite{D1, D2} of the geometric completion of $\cH$ with respect to not only Mabuchi's metric, but also the Finsler metric $d_1$,  and the convexity of $\cK$-energy \cite{BB} (see also \cite{CLP}),  and results in \cite{DR}. 

For the existence part, in the case of toric surface, S. Donaldson \cite{Don02, Don07, Don09} proved that existence of csck is equivalent to K-stability (as well as properness). Donaldson's approach relied on toric surface assumption where the dimension and convex analysis play essential roles.
Other than Donaldson's deep work, how to approach existence of csck seemed intractable technically in general.
One of the key steps is to derive the estimate of second order (from point view of PDE), or to control the metric in terms of its scalar curvature (from point of view of geometry). This problem has been known in the field to be extremely hard. 
The csck equation reads, given a K\"ahler potential such that $g_{i\bar j}+\phi_{i\bar j}>0$, 
\[
-g^{i\bar j}_\phi\p_i\p_{\bar j}\log \det(g_{i\bar j}+\phi_{i\bar j})=\text{constant}
\]
This is a formidable fourth order nonlinear elliptic equation on $\phi$. How to derive its a priori estimates, in particular to control $n+\Delta \phi$,  seemed to be totally out of sight.  An attempt made by Chen and the author \cite{chenhe12} is to derive second order estimate through the complex Monge-Ampere equation under certain ``minimal" assumption on ratio of the volume forms, with the motivation to derive second order estimates in terms of csck equation. 
The classical results  of Yau \cite{Yau} and Aubin \cite{Aubin} using maximum principle to control $n+\Delta\phi$ through the complex Monge-Ampere equation require  regularity of the volume ratio up to $C^2$.  As a comparison, together with Chen in \cite{chenhe12} 
we have obtained a rather optimal regularity depending on $W^{1, p}$-norm of the volume ratio, where $p>2\text{dim}_\R M$ is the optimal power. 
We also mention the work together with Chen and Darvas \cite{CDH}, where we obtained the control of second order ($n+\Delta\phi$), assuming merely $d_1$-distance bound and upper bound for Ricci curvature. This result builds upon deep analysis in pluripotential theory and the complex Monge-Ampere equation. Using this result, we have proved the properness of $\cK$-energy implies the existence of csck along Chen's continuity path \cite{chen15} if we assume Ricci curvature upper bound \cite{CDH}[Theorem 1.4]. This seemed to summarize the author's knowledge of limitation to control the second order through its scalar curvature (see in particular the discussion in \cite{CDH} after Theorem 1.4 and Remark 2.1).\\

Here come the breakthrough of Chen-Cheng \cite{CC1, CC2, CC3}. In short, Chen and Cheng have obtained the a priori estimates by treating csck type equations as two coupled second order elliptic equations. Their methods are very skillful and truly amazing. Using these estimates and recent developments in K\"ahler geometry, they proved the following theorem,
\begin{thm}[Chen-Cheng]\label{main0}Suppose the $\cK$-energy is proper with respect to $d_1$ modulo $\text{Aut}_0(M)$, then there exists a smooth csck in $(M, [\omega], J)$. 
\end{thm}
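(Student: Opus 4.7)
The plan is to run Chen's continuity method along the path
\[
t\bigl(R(\omega_\phi)-\underline{R}\bigr)=(1-t)\bigl(\mathrm{tr}_{\omega_\phi}\omega-n\bigr),\qquad t\in[0,1],
\]
whose $t=0$ equation admits the trivial solution $\phi\equiv 0$ and whose $t=1$ equation is exactly csck. Let $S\subset[0,1]$ denote the set of parameters for which a smooth solution exists. I would prove $S=[0,1]$ in the standard nonempty/open/closed manner; nonemptiness is immediate.

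For openness at $t_0\in S$ with $t_0>0$, the linearization at $\phi_{t_0}$ is a fourth-order self-adjoint elliptic operator whose kernel, modulo constants, consists of holomorphy potentials generating elements of $\mathrm{Aut}_0(M)$. After restricting to a slice transverse to the $\mathrm{Aut}_0(M)$-orbit through $\phi_{t_0}$, this operator becomes an isomorphism on suitable H\"older spaces, and the implicit function theorem yields a smooth family of solutions near $t_0$. At $t=0$ the linearization of $\mathrm{tr}_{\omega_\phi}\omega=n$ reduces to $\Delta_\omega$, which is an isomorphism on functions of zero average, so openness there follows by direct perturbation.

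The heart of the proof is closedness, where properness and the Chen--Cheng a priori estimates combine. If $t_k\in S$ converges to $t^*$, a standard computation shows that the $\cK$-energy is monotone nonincreasing along the path, so $\cK(\phi_{t_k})$ is uniformly bounded above. The hypothesized properness modulo $\mathrm{Aut}_0(M)$ then furnishes automorphisms $g_k\in\mathrm{Aut}_0(M)$ such that $\widetilde{\phi}_k:=g_k^\ast\phi_k$ has uniformly bounded $d_1$-distance to the origin. Now the deep estimates of \cite{CC1,CC2} apply: treating the equation as a coupled second-order system for $(\phi,F)$ with $F:=\log(\omega_\phi^n/\omega^n)$, the $d_1$-bound yields entropy control, hence $L^\infty$ bounds on $\phi$ and $F$, hence a $C^0$-bound on $n+\Delta\phi$, and finally Schauder bootstrap gives uniform $C^{k,\alpha}$ estimates. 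Extracting a smooth subsequential limit produces a solution at $t^*$.

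The main obstacle is the coordination between the group action and the a priori estimates: the normalizing automorphisms $g_k$ vary with $k$, and one must verify that after pullback the relevant entropy and $d_1$-bounds genuinely transfer to $\widetilde{\phi}_k$, and that the reference quantities entering the estimates of \cite{CC1,CC2} remain controlled along the sequence. A related subtlety is that $\mathrm{Aut}_0(M)$ is in general noncompact, so one must work with the quotient $d_1$-pseudometric and ensure that $g_k$ can be chosen to nearly realize the infimum defining it; this forces a careful use of the Darvas geometry of $(\cH,d_1)$ and its completion, but once this is set up, the continuity method closes up and yields a csck metric at $t=1$.
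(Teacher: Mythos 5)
Your overall strategy is the right one and matches the route taken here (and in Chen--Cheng \cite{CC3}): run Chen's continuity path, get closedness for $t<1$ from the lower bound of $\cK$, and use properness at $t\to 1$ to normalize by automorphisms before invoking the a priori estimates. But two steps are genuinely incomplete. First, openness at $t=0$ does not ``follow by direct perturbation'' from the invertibility of $\Delta_\omega$: the equation at $t=0$ is second order ($\mathrm{tr}_{\omega_\phi}\omega=n$) while for $t>0$ it is fourth order, so this is a singular perturbation and the implicit function theorem cannot be applied naively at the endpoint. Rewriting the path as the twisted equation $R_\phi-\Lambda_{\omega_\phi}(r\omega)=\text{const}$ with $r=\frac{1-t}{t}\to\infty$, what is needed is existence for arbitrarily large twist; this requires Hashimoto's construction of approximate solutions to order $r^{-m}$ together with a quantitative inverse function theorem in which the inverse of the linearization has norm growing like $r$ (this is Theorem \ref{open1} of the paper). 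Relatedly, for $t_0\in(0,1)$ your slice construction is unnecessary and your description of the kernel is off: at a solution the linearized operator is $-t\cD^*\cD+(1-t)F_{\omega_\phi,\omega}$, and the positive twist forces the kernel to be exactly the constants, so no holomorphy potentials survive and the IFT applies directly.

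Second, in the closedness step at $t\to1$ you correctly identify the obstacle (the normalizing automorphisms $g_k$ vary) but the estimates you then claim are not available. After pulling back, the twisted equation has background twist $\frac{1-t_k}{t_k}\,g_k^*\omega=\frac{1-t_k}{t_k}(\omega+\sqrt{-1}\p\bar\p h_k)$ with $h_k$ completely uncontrolled in $C^0$; one only secures the $\alpha$-invariant bound $\int_M e^{-p f_k}\omega^n\le C$ for $f_k=\frac{1-t_k}{t_k}h_k$, and then Theorem 2.3 of \cite{CC3} (Theorem \ref{keyestimate2} here) yields only $\|F_k+f_k\|_{W^{1,2p}}+\|n+\Delta\phi_k\|_{L^p}\le C$ --- not a $C^0$ bound on $n+\Delta\phi_k$, and not uniform $C^{k,\alpha}$ bounds along the sequence, since only the combination $F_k+f_k$ is controlled. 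The limit is therefore first a weak ($W^{2,p}$) solution; one must separately show $\int_M|f_k|\,\omega^n\to0$ (using $(1-t_k)\mathbb{J}(\tilde\phi_k)\to0$, which follows from the minimizing property of $\phi_{t}$ for $t\cK+(1-t)\mathbb{J}$) so that the twist disappears in the limit equation, and then upgrade to smoothness by the regularity theory for the limiting csck equation. Your ``Schauder bootstrap along the sequence'' would fail because the individual equations degenerate as $t_k\to1$.
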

Chen and Cheng  proved much more, including Donaldson's conjecture \cite{Don97} and Chen-Donaldson geodesic stability conjecture. We refer the readers to \cite{CC1, CC2, CC3} for their exciting achievements. 
In this paper we deal with Calabi's extremal metrics, by extending Chen-Cheng's breakthrough \cite{CC1, CC2, CC3} on the existence of constant scalar curvature metric (csck) in $(M, [\omega], J)$ to the extremal case.  We prove the following, 
\begin{thm}\label{main1} Let $V$ be an extremal vector field of $(M, [\omega], J)$, which is unique up to the choice of a maximal compact subgroup of $\text{Aut}_0(M)$. 
Then the following statements are equivalent,
\begin{enumerate}
\item There exists an extremal metric in $(M, [\omega], J)$.
\item The modified $\cK$-energy is proper with respect to $d_{1}$ modulo the action of $\text{Aut}_0(M, V)$, the subgroup of $\text{Aut}_0(M)$ which commutes with the flow of $V$. \end{enumerate}
\end{thm}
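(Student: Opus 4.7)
The plan is to follow the Chen--Cheng framework \cite{CC1, CC2, CC3} with the modifications forced by the extremal vector field $V$, and to combine it with the strategy of Berman--Darvas--Lu \cite{BDL, BDL2} for the converse direction. To begin, I would rewrite the extremal equation with extremal vector $V$ as
\[
S(\omega_\phi) - V(\phi) = \underline{S} + \theta_V(\omega),
\]
where $\theta_V(\omega)$ is the (fixed) holomorphy potential of $V$ with respect to the background metric $\omega$, and fix a maximal compact subgroup $K \subset \Aut_0(M)$ containing the flow of $\text{Im}(V)$. The natural functional is the modified $\cK$-energy $\cK_V$, whose critical points in the space $\cH^K$ of $K$-invariant potentials are exactly the $V$-extremal metrics. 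By an extension of \cite{BB}, $\cK_V$ is convex along $C^{1,1}$-geodesics of $K$-invariant potentials and descends to the Finsler $d_1$-completion.

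For $(1) \Rightarrow (2)$, I would adapt the Berman--Darvas--Lu argument. Given an extremal metric $\omega_0$ with extremal vector $V$, convexity of $\cK_V$ along $d_1$-geodesics, combined with a Darvas--Rubinstein type variational principle, should yield $d_1$-coercivity of $\cK_V$ modulo $\Aut_0(M, V)$. The passage from $\Aut_0(M)$ to $\Aut_0(M, V)$ is forced by the functional itself: $\cK_V$ is invariant only under automorphisms commuting with the flow of $V$, since the holomorphy potential $\theta_V$ is only equivariant under that subgroup.

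For $(2) \Rightarrow (1)$, I would run Chen's continuity method in the $K$-invariant setting, coupling it with the a priori estimates of \cite{CC1, CC2, CC3}. The central trick of Chen--Cheng---rewriting the scalar curvature equation as two coupled second-order equations for $\phi$ and $F=\log\det(g_{i\bar j}+\phi_{i\bar j})-\log\det g_{i\bar j}$---is unaffected by the additional first-order term $V(\phi)$, which is strictly lower order than $S(\omega_\phi)$ and is already controlled by the gradient estimates in \cite{CC3}. Openness along the path follows from elliptic theory for the linearized operator on the $K$-invariant subspace, where its kernel is absorbed by the gauge action of $\Aut_0(M, V)$; properness of $\cK_V$ modulo $\Aut_0(M, V)$ then prevents $d_1$-blowup of the family of solutions and produces an extremal metric in the limit.

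The main obstacle I anticipate is the $\Aut_0(M, V)$-bookkeeping: one must verify that the continuity path can be arranged to consist of $K$-invariant solutions, and that the gauge needed to absorb any drift lies inside $\Aut_0(M, V)$ rather than the larger $\Aut_0(M)$. This is precisely where the reductive/reduced properness reformulations flagged in the abstract intervene---they allow properness to be tested only on $K$-invariant potentials, and the compactness they provide automatically extracts limit automorphisms commuting with the flow of $V$. A secondary technicality is to track the normalization of $\theta_V$ along the deformation so that the limit equation is the $V$-extremal equation for the prescribed extremal vector rather than some perturbed companion.
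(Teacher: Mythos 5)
Your overall architecture matches the paper's: Chen's modified continuity path $t(R_\phi-\underline R-\theta_X(\phi))-(1-t)(\mathrm{tr}_\phi\omega-n)=0$ with the Chen--Cheng coupled-equation estimates for $(2)\Rightarrow(1)$, and a Berman--Darvas--Lu/Darvas--Rubinstein style convexity argument for $(1)\Rightarrow(2)$. Two of your supporting remarks are slightly off but harmless: the extra term $\theta_X(\phi)=\theta_X+V(\phi)$ is controlled not by the gradient estimates of \cite{CC3} but by Zhu's uniform bound $|\theta_X(\phi)|\le C_1$ for all $\phi\in\cH_X$, which is what lets Theorems 2.1--2.2 of Chen--Cheng apply verbatim; and openness does not require absorbing a kernel by the gauge action --- for $t<1$ the twist makes the linearized operator $-\cD^*\cD+rF_{\omega,\alpha}$ invertible modulo constants on the $T$-invariant subspace, and the paper proves openness by Hashimoto's scheme of building approximate solutions to order $r^{-m}$ and then applying the inverse function theorem in a $T$-invariant way.

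The genuine gap is in $(1)\Rightarrow(2)$. The phrase ``a Darvas--Rubinstein type variational principle'' conceals the one substantial new theorem this direction needs: any minimizer $\phi_*$ of $\cK_X$ over $\cE^1_X$ is smooth and defines an extremal metric. The coercivity proof runs by contradiction along unit-speed $d_1$-geodesics: from a sequence with $\cK_X(\phi_i)\le C$ and $d_{1,G}(0,\phi_i)\to\infty$ one extracts, by convexity and compactness in $\cE^1$, a limit $u$ at distance $1$ from the base point with $\cK_X(u)=\inf\cK_X$; without the regularity theorem $u$ is only a finite-energy minimizer, and you cannot invoke the transitivity of $G$ on extremal metrics to reach the contradiction. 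Proving that regularity is itself nontrivial: it requires a second continuity path twisted by $\omega_{\phi_i}$ for smooth approximants $\phi_i$ of $\phi_*$, together with an approximation lemma producing $u_k\in\cH_X$ with $d_1(u_k,\phi_*)\to 0$ \emph{and} convergent entropy, carried out $T$-invariantly (averaging over the torus generated by $JV$). Your proposal as written does not identify this step, and the argument does not close without it.
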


\begin{rmk}By results of \cite{D2} and \cite{DR}, the properness  in terms of $d_1$ and the properness  in terms of Aubin's $J$-functional are equivalent. When $V=0$, Theorem \ref{main1} is reduced to Theorem \ref{main0} and \cite{BDL2}[Theorem 1.3]. \end{rmk}

How to formulate a precise notion of properness for $\cK$-energy is indeed a subtle issue when $\text{Aut}_0(M)\neq 0$. This was clarified by Zhou-Zhu\cite{ZZ1} and Darvas-Rubinstein \cite{DR}. In particular, they disproved the following conjecture \cite{DR},
\begin{conj}\label{c200}Let $(M, \omega, J)$ be a compact K\"ahler manifold. Let $K$ be a maximal compact subgroup of $\text{Aut}_0(M)$. Then $\cH$ contains a constant scalar curvature metric if and only if $\cK$-energy is proper on the subset $\cH_K\subset \cH$ consisting of $K$-invariant metrics. 
\end{conj}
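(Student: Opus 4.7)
The plan is to attempt both implications of the conjectured equivalence. The easier direction is that properness of $\cK$ on $\cH_K$ (with no quotient) implies existence of a csck. This would follow immediately from Chen--Cheng's Theorem~\ref{main0}, since bare properness of $\cK$ on $\cH_K$ with respect to $d_1$ is strictly stronger than properness modulo $\Aut_0(M)$; alternatively one could try a direct minimization inside $\cH_K$, using the Chen--Cheng a priori estimates \cite{CC1,CC2,CC3} to promote a minimizing sequence to a smooth csck.

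For the forward direction, my approach would be: start from a csck $\omega_\ast$, replace it by a $K$-invariant csck (using uniqueness of csck modulo $\Aut_0(M)$ together with the Calabi--Matsushima structure of $\Aut_0(M)$, or by averaging over $K$), invoke Berman--Darvas--Lu \cite{BDL2} to conclude properness of $\cK$ modulo $\Aut_0(M)$ with respect to $d_1$, and finally attempt to descend this to properness on $\cH_K$ without any quotient, on the heuristic ground that the imposed $K$-symmetry should ``absorb'' the automorphism ambiguity.

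The main obstacle is precisely this last descent, and I expect it to be fatal. In general, the centralizer $Z_{\Aut_0(M)}(K)$ contains non-compact directions: for example, if $K$ contains a torus $T$, then the complexified torus $T^{\C}\subset\Aut_0(M)$ commutes with $T$ and preserves $\cH_K$. Pick a one-parameter subgroup $\sigma_t$ in such a non-compact part. Then $\sigma_t^\ast\omega_\ast$ remains $K$-invariant and csck, so $\cK(\sigma_t^\ast\omega_\ast)=\cK(\omega_\ast)$ is constant at the minimum value, whereas $d_1(\sigma_t^\ast\omega_\ast,\omega_\ast)\to\infty$ as $|t|\to\infty$. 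This produces a sequence in $\cH_K$ with bounded $\cK$-energy and unbounded $d_1$-distance from a reference point, directly contradicting the proposed properness of $\cK$ on $\cH_K$. Thus the forward implication fails whenever $\Aut_0(M)$ has a non-compact part commuting with $K$, and the conjecture as stated must be false; this is the Zhou--Zhu / Darvas--Rubinstein observation, and it forces the correct formulation to mod out by this residual (reductive) subgroup, as in Theorem~\ref{main1} and the reductive/reduced properness notions introduced in this paper.
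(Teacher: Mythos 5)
You have correctly recognized that this statement is not something the paper proves: it is recorded precisely as a conjecture that Zhou--Zhu \cite{ZZ1} and Darvas--Rubinstein \cite{DR} \emph{disproved}, and the paper's only ``proof'' is that citation --- the statement is there to motivate the quotiented notions of properness (Definitions \ref{rp}, \ref{rp1}, \ref{rp2}) and Theorems \ref{main1}, \ref{main2}. So your overall conclusion, that the forward implication fails and the correct formulation must mod out by the residual non-compact automorphisms preserving $\cH_K$, agrees with the paper.

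Two points in your argument need repair. First, the counterexample mechanism as you state it is too quick: if $T\subset K$ is merely \emph{some} torus, then $T^{\C}$ commutes with $T$ but not with all of $K$, so for $\sigma_t\in T^{\C}$ the pullback $\sigma_t^*\omega_*$ is $T$-invariant but in general \emph{not} $K$-invariant, and the orbit need not stay in $\cH_K$. For $\sigma_t^*$ to preserve $\cH_K$ you need $\sigma_t$ to normalize $K$, which for a one-parameter subgroup of $K^{\C}$ forces $T$ to lie in the center of $K$. On $\mathbb{CP}^1$ with $K=\mathrm{PSU}(2)$ the center is trivial and your mechanism produces nothing (there $\cH_K$ is essentially a point and properness is vacuous). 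The genuine counterexamples require $Z(K)$ to be positive-dimensional, so that the non-compact central torus $Z(K)^{\C}\subset\Aut_0(M)$ acts on $\cH_K$; one must then also verify $d_1(0,\sigma_t[\phi_*])\to\infty$, which is not automatic but follows from the analysis of automorphism orbits in \cite{DR} when the generating field has non-constant holomorphy potential. Second, your ``easier direction'' is not an immediate consequence of Theorem \ref{main0}: properness of $\cK$ on the subset $\cH_K$ is a hypothesis only about $K$-invariant potentials and does not imply properness modulo $\Aut_0(M)$ on all of $\cH$ --- the two conditions are not comparable, since one restricts the class of test metrics while the other weakens the divergence hypothesis. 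One must instead run the continuity method entirely inside the invariant class, which is exactly what the paper does to prove Theorem \ref{main2} (and which your fallback suggestion of direct minimization in $\cH_K$ via the Chen--Cheng estimates correctly anticipates).
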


It is usually much more effective to check properness for metrics with sufficient symmetry. Our observation is that, one can still restrict to $K$-invariant metrics, for a maximal compact subgroup of $\text{Aut}_0(M, V)$, if we assume that $\text{Aut}_0(M, V)$ is the complexification of $K$. Note that $\text{Aut}_0(M, V)$ has to be the complexification of its maximal compact subgroup if there exists an extremal metric by Calabi \cite{Ca2} (see Theorem \ref{calabi01}). We write $G=\text{Aut}_0(M, V)$ for simplicity.  We shall also consider a compact subgroup $K_0\subset K$ such that the Lie algebra of $K_0$ consisting of reduced holomorphic vector fields and let $G_0$ be the complexification of $K_0$. 
We introduce the notion reductive-properness (for the pair $(K, G)$) and reduced properness (for the pair $(K_0, G_0)$), see Definition \ref{rp}, Definition \ref{rp1} and Definition \ref{rp2}. 
These two definitions are essentially the same, in view of Calabi's decomposition \eqref{decom01}.
Reductive properness and reduced properness seem to be  right replacements of properness needed in Conjecture \ref{c200}.  We have the following
 
\begin{thm}\label{main2}
The following statements are equivalent,
\begin{enumerate}
\item There exists an extremal metric in $(M, [\omega], J)$ with extremal vector field $V$.
\item The $\cK_X$-energy is reductive-proper restricted to $\cH_K$ with respect to $d_{1, G}$. 
\item The $\cK_X$-energy is reduced-proper restricted to $\cH_{K_0}$ with respect to $d_{1, G_0}$. 
\end{enumerate}
\end{thm}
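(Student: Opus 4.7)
The plan is to use Theorem \ref{main1} as the existence/properness equivalence on all of $\cH$, and then to upgrade it on both sides by a symmetry reduction to the $K$-invariant (resp.\ $K_0$-invariant) subspace, implemented by re-running the Chen-Cheng continuity method of \cite{CC3} inside that subspace.

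First I would dispatch $(1) \Rightarrow (2)$ and $(1) \Rightarrow (3)$. Given an extremal metric with extremal vector $V$, Calabi's structure theorem (Theorem \ref{calabi01}) implies that $G = \text{Aut}_0(M,V)$ is reductive with $G = K^{\mathbb C}$, and the extremal metric may be chosen $K$-invariant. Theorem \ref{main1} then yields that the modified $\cK$-energy is proper on $\cH$ modulo $G$ with respect to $d_1$. Because $\cK_V$ is $G$-invariant and $\cH_K \subset \cH$, restricting this statement to $\cH_K$ and replacing $d_1$ by the quotient distance $d_{1,G}$ is precisely reductive-properness. The implication $(1) \Rightarrow (3)$ follows by the same restriction applied to the pair $(K_0, G_0)$, using Calabi's decomposition \eqref{decom01} to factor out the parallel holomorphic vector fields, which act trivially on $\cK_V$.

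For $(2) \Rightarrow (1)$ and $(3) \Rightarrow (1)$, I would re-run the proof of Theorem \ref{main1} entirely inside $\cH_K$ (resp.\ $\cH_{K_0}$). Two observations drive this. First, the modified continuity path for the extremal equation is $K$-equivariant — the coefficients, reference form, and potential for $V$ all transform covariantly under $K$ — so by uniqueness of the solution the path stays in $\cH_K$ at every time, starting from a $K$-invariant reference. Second, the a priori estimates of \cite{CC3}, as adapted in the proof of Theorem \ref{main1}, can be stated in terms of an upper bound on $\cK_V$ along the path together with a bound on the $d_{1,G}$-distance to a fixed $K$-invariant reference, rather than the full $d_1$-distance: $\cK_V$ is $G$-invariant, and any $d_1$-bound in the argument can be promoted to a $d_{1,G}$-bound by choosing an optimal representative in the $G$-orbit. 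Reductive-properness (resp.\ reduced-properness) furnishes exactly these bounds, the continuity path closes, and an extremal metric is produced.

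The main obstacle I anticipate is the precise justification of this last point: one must trace through each place in the Chen-Cheng / Theorem \ref{main1} argument where a $d_1$-bound is invoked and verify that a $d_{1,G}$-bound suffices. This reduces to a Darvas-Rubinstein type statement \cite{DR} that the infimum defining $d_{1,G}$ is attained along the $G$-orbit of any $K$-invariant metric, combined with $G$-invariance of $\cK_V$ to move the reference freely within its orbit without changing the functional. The passage $(3) \Rightarrow (1)$ needs the additional remark, via Calabi's decomposition \eqref{decom01}, that parallel holomorphic vector fields contribute trivially to the modified $\cK$-energy, so that $G_0$-properness on $\cH_{K_0}$ really does control the entire $G$-orbit and the two notions of properness are interchangeable for the purpose of closing the continuity path.
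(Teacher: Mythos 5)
Your overall strategy coincides with the paper's: the forward direction is a restriction of the full properness statement to invariant potentials, and the converse re-runs the modified continuity path with the openness theorem stated in $K$-invariant (resp.\ $K_0$-invariant) function spaces and closedness driven by the entropy bound coming from an upper bound on $\cK_X$ plus a $d_1$-bound after moving by an element of $G$. However, there is a genuine gap in your plan for $(2)\Rightarrow(1)$: you cannot ``re-run the proof entirely inside $\cH_K$.'' The continuity path itself does stay in $\cH_K$, but the moment reductive-properness hands you only a $d_{1,G}$-bound, you must replace $\tilde\phi_i$ by $\phi_i$ with $\omega_{\phi_i}=\sigma_i^*\omega_{\tilde\phi_i}$ for an optimal $\sigma_i\in G$, and since $K$ is not a normal subgroup of $G=\text{Aut}_0(M,V)$, the potentials $\phi_i$ and the pulled-back reference forms $\sigma_i^*\omega$ are in general no longer $K$-invariant --- only the $T$-invariance (for the torus generated by $JV$) survives. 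The limiting argument therefore necessarily takes place in $\cH_X$, not $\cH_K$; it still works because the Chen--Cheng estimates only require $\theta_X(\phi_i)$ to be real-valued and uniformly bounded, which $T$-invariance and Zhu's estimate provide, but this is exactly the point your proposal elides by asserting the argument closes inside $\cH_K$.

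A second, related gap is your repeated appeal to ``$G$-invariance of $\cK_X$.'' Under hypothesis (2) one only assumes $\cK_X$ is bounded below on $\cH_K$, not on all of $\cH_X$, so the easy implication (lower bound on $\cH_X$ forces the derivative of $\cK_X$ along one-parameter subgroups of $G$ to vanish) is not available. The invariance $\cK_X(\sigma[\tilde\phi_i])=\cK_X(\tilde\phi_i)$, which is what gives the uniform entropy bound on the non-$K$-invariant sequence $\phi_i$, has to be derived from the decomposition \eqref{calabi002} together with the Futaki--Mabuchi bilinear form: for $Y\in\mathfrak{ham}\oplus J\mathfrak{ham}$ the derivative of $\cK_X$ along the flow of $Y_\R$ equals the real part of the Futaki invariant minus the Futaki--Mabuchi pairing of $Y$ with $X$, which vanishes by the very definition of the extremal vector field. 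Your proposal gestures at this only for the parallel fields in the $(3)\Rightarrow(1)$ step; it is needed in full for $(2)\Rightarrow(1)$ as well. With these two points repaired, the rest of your outline (openness via the $K$-equivariant twisted-extremal perturbation, closedness via the entropy bound and the coupled-equation estimates, and the forward direction via the regularity of $\cE^1_X$-minimizers and geodesic convexity) matches the paper's proof.
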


{\bf Acknowledgement:} The author thanks Prof. Xiuxiong Chen sincerely for his kindness to ask him to write down the extremal case. The author also thanks Song Sun for insightful discussions. The author is partly supported by an NSF grant, award no. 1611797. 

\numberwithin{equation}{section}
\numberwithin{thm}{section}

\section{Preliminaries}
We recall some basic concepts in K\"ahler geometry, in particular those related to Calabi's extremal metric \cite{Ca1, Ca2}. 
A good reference for extremal metrics may be found in Gauduchon \cite{PG}[Chapter 2, 3, 4].  
We consider the space of normalized K\"ahler potentials, which can be identified with the space of K\"ahler metrics in the class class $[\omega]$,
\begin{equation}
\cH_0=\{\phi\in C^\infty(M): \omega_\phi=\omega+\sqrt{-1} \p\bar\p \phi>0, \mathbb{I}_\omega(\phi)=0\},
\end{equation}
where the functional $\mathbb{I}_{\omega}$ is defined to be
\begin{equation}
\mathbb{I}_\omega(\phi)=\frac{1}{(n+1)!}\int_M \phi \sum_{k=0}^n \omega^k_\phi\wedge \omega^{n-k}.
\end{equation}
Note that the $\mathbb{I}$-functional can be characterized as
$\delta \mathbb{I}=\int_M\delta\phi \omega_\phi^n$. We also define the $\mathbb{I}_{\omega, \chi}$ functional, for a $(1, 1)$-form $\chi$,
\begin{equation}
\mathbb{I}_{\omega, \chi}(\phi)=\frac{1}{n!}\int_M \sum_{k=0}^{n-1}\phi \chi \wedge\omega_\phi^k\wedge\omega^{n-1-k}
\end{equation}
When the dependence on $\omega$ is clear, we simply write the functional $\mathbb{I}(\phi)$ and $\mathbb{I}_\chi(\phi)$. We should mention that $\mathbb{I}$ is a functional defined on $\cH$, which depends a particular normalization, for example, $\mathbb{I}(\phi+c)=\mathbb{I}(\phi)+c\text{Vol}(M)$. \\

We need some preparations on the automorphism group of $(M, J)$. 
Let $\text{Aut}_0(M)$ be the identity component of the automorphism group and $\mathfrak{h}$ be its Lie algebra. Calabi's extremal metric is defined to be that $\nabla R$ is a real holomorphic vector field, hence $J\nabla R$ is a Killing vector field (of the extremal metric). We recall the following result of Calabi,
\begin{thm}[Calabi]\label{calabi01}Let $(M, g, J)$ be an extremal K\"ahler metric. The complex Lie algebra $\mathfrak{h}$ has the following orthogonal decomposition
\begin{equation}\label{decom01}\mathfrak{h}=\mathfrak{h}^0\oplus (\oplus_{\l>0} \mathfrak{h}^\l),
\end{equation} 
where $\mathfrak{h}^0$ denotes the kernel of $[J\nabla R, \cdot]$, i.e. the centralizer of $J\nabla R$ in $\mathfrak{h}$, and for any $\l>0$, $\mathfrak{h}^\l$ denotes the subspace of elements $X$ of $\mathfrak{h}$ such that $[J\nabla R, X]=\l JX$. In particular, the identity component of isometry group $\text{Isom}_0(g)$ of $g$ is a maximal compact subgroup of $\text{Aut}_0(M)$.  
 \end{thm}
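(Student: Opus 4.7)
The overall strategy is to turn $A := \operatorname{ad}(V)$, with $V := J\nabla R$, into a skew-Hermitian operator on the finite-dimensional complex vector space $(\mathfrak{h}, J)$, and to apply the spectral theorem. The extremal hypothesis enters through one crucial fact: since $\nabla R$ is real holomorphic, so is $V$; moreover $\iota_V \omega = \omega(J\nabla R, \cdot) = -g(\nabla R, \cdot) = -dR$, so $V$ is the Hamiltonian vector field of $-R$ with respect to $\omega$, and its flow preserves $\omega$. Combined with holomorphicity, $V$ must therefore also preserve $g = \omega(\cdot, J\cdot)$, i.e.\ $V$ is Killing. Hence $V \in \mathfrak{h}$ and the flow $\exp(tV)$ lies inside $\operatorname{Isom}_0(g)$.

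Next I would verify that $A$ is skew-Hermitian for a natural structure on $(\mathfrak{h}, J)$. Because $V$ is holomorphic, $\mathcal{L}_V J = 0$ forces $A \circ J = J \circ A$, so $A$ is $\mathbb{C}$-linear on $(\mathfrak{h}, J)$. Because $V$ is Killing, the one-parameter group $\exp(tV)$ acts on $\mathfrak{h}$ via the push-forward (adjoint) action by isometries, hence preserves the $L^2$-pairing $\langle X, Y\rangle := \int_M g(X, Y)\, \omega^n$; differentiating at $t = 0$ gives that $A$ is $\langle\cdot, \cdot\rangle$-skew. Extending to the Hermitian form $h(X, Y) := \langle X, Y\rangle - \sqrt{-1}\,\langle JX, Y\rangle$ on $(\mathfrak{h}, J)$, the combination of $J$-linearity and real skew-symmetry yields $h(AX, Y) = -h(X, AY)$, so $A$ is skew-Hermitian. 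The finite-dimensional spectral theorem now produces an $h$-orthogonal decomposition of $\mathfrak{h}$ into eigenspaces of $A$ with purely imaginary eigenvalues $\sqrt{-1}\,\lambda$; the equation $AX = \sqrt{-1}\,\lambda X$ reads $[V, X] = \lambda JX$ in real terms, giving exactly the subspaces $\mathfrak{h}^\lambda$ of the statement, with $\mathfrak{h}^0$ the centralizer of $V$ in $\mathfrak{h}$. This proves \eqref{decom01}.

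For the final claim that $\operatorname{Isom}_0(g)$ is maximal compact in $\operatorname{Aut}_0(M)$: compactness is standard since $M$ is compact. For maximality, given any compact subgroup $K' \subset \operatorname{Aut}_0(M)$ containing $\operatorname{Isom}_0(g)$, averaging $g$ over the Haar measure of $K'$ yields a $K'$-invariant Kähler metric in $[\omega]$; combining with either a direct Lie-algebraic analysis of the decomposition just obtained (extra Killing fields would have to lie in the non-compact $\lambda \ne 0$ part of $\mathfrak{h}$, contradicting $h$-orthogonality) or with the uniqueness of extremal representatives of $[\omega]$ modulo $\operatorname{Aut}_0(M)$, one concludes that $K' = \operatorname{Isom}_0(g)$ up to conjugation. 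The technically cleanest step is the verification of $h$-skew-Hermiticity in Step~2, since it packages the entire extremal hypothesis into a single property; the genuine obstacle is the maximal-compactness statement, whose slickest modern proofs invoke post-Calabi uniqueness theorems and whose original proof by Calabi instead requires a careful Lie-algebraic analysis of \eqref{decom01}.
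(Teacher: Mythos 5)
The paper does not actually prove this statement---it is recalled from Calabi \cite{Ca2} (see also \cite{PG})---so your proposal has to be judged on its own. Its first half is correct and cleanly organized: for an extremal metric $V=J\nabla R$ is real holomorphic with $\iota_V\omega=-dR$, hence Killing; $\mathrm{ad}(V)$ therefore preserves $\mathfrak{h}$, commutes with $J$, and is skew for the ($J$-invariant, positive definite) $L^2$ pairing, so it is skew-Hermitian and diagonalizable. This legitimately yields an orthogonal decomposition $\mathfrak{h}=\bigoplus_{\lambda\in\mathbb{R}}\mathfrak{h}^\lambda$ with $[J\nabla R,X]=\lambda JX$ on $\mathfrak{h}^\lambda$, with $\mathfrak{h}^0$ the centralizer.

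The gap is that the theorem asserts the sum runs over $\lambda=0$ and $\lambda>0$ only, i.e.\ that \emph{no negative eigenvalues occur}, and your argument cannot see this. A skew-Hermitian operator on a complex vector space has purely imaginary eigenvalues $\sqrt{-1}\,\lambda$ with no constraint whatsoever on the sign of $\lambda$ (there is no conjugation symmetry here: each $\mathfrak{h}^\lambda$ is $J$-invariant, so the eigenvalue is intrinsic and cannot be relabelled). The one-sidedness of the spectrum is precisely the substantive content of Calabi's theorem---it is what makes $\bigoplus_{\lambda>0}\mathfrak{h}^\lambda$ a nilpotent ideal and $\mathfrak{h}$ a semidirect sum---and it is obtained not from the spectral theorem but from the Lichnerowicz operator: writing $\mathcal{L}=\mathcal{D}^*\mathcal{D}$ with $\mathcal{D}\psi=\bar\partial(\nabla^{1,0}\psi)$, one has that $\mathcal{L}-\overline{\mathcal{L}}$ is the first-order derivation by (essentially) $J\nabla R$; extremality gives $[\mathcal{L},\overline{\mathcal{L}}]=0$, so $\overline{\mathcal{L}}$ preserves $\ker\mathcal{L}$ and there coincides with that derivation, and the \emph{non-negativity} of $\overline{\mathcal{L}}=\overline{\mathcal{D}}^{\,*}\overline{\mathcal{D}}$ is what forces $\lambda\geq 0$. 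Your route has no access to this positivity. Two further points: the ``in particular'' clause on maximal compactness is only gestured at, and the suggested appeal to uniqueness of extremal metrics modulo $\mathrm{Aut}_0(M)$ is circular in this context (that uniqueness is a much later theorem which the present structure result does not presuppose); and you do not establish that $\mathfrak{h}^0$ is the complexification of the isometry algebra, i.e.\ the decomposition \eqref{calabi002}, which is the part of the theorem the paper actually relies on when it takes $G=\mathrm{Aut}_0(M,V)$ to be the complexification of $K$.
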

 Calabi's theorem is a generalization of Matsushima's  results on K\"ahler-Einstein metrics \cite{Mat} and Lichnerowicz's results \cite{Li} on constant scalar curvature metrics. 
 A point to make is that an \emph{extremal vector field} is \emph{a priori} determined by $(M, [\omega], J)$ due to Futaki-Mabuchi \cite{FM}, regardless whether an extremal metric exists or not  (as a consequence of \emph{Futaki-Mabuchi bilinear form}). Such a real holomorphic vector field is unique up to the conjugate action of $\text{Aut}_0(M)$. 
Hence the Futaki invariant \cite{Fut} is zero if and only if $V=0$. By Calabi's theorem, if an extremal exists, then $\text{Aut}_0(M, V)$ corresponds to the Lie algebra $\mathfrak{h}^0$, hence it is the complexification of a maximal compact subgroup $K$. In particular, we have the decomposition
\begin{equation}\label{calabi002}
\mathfrak{h}^0=\mathfrak{a}\oplus \mathfrak{ham}\oplus J\mathfrak{ham},
\end{equation} 
where $\mathfrak{ham}$ is the Lie algebra of reduced (holomorphic) Killing vector field, and $\mathfrak{a}$ is the Abelian algebra of parallel holomorphic vector fields. The Lie algebra of $K$, $\mathfrak{k}=\mathfrak{a}\oplus \mathfrak{ham}$ as in the decomposition \eqref{calabi002}. 

Given the (real) extremal vector field $V$, we refer the corresponding holomorphic vector field $X=V-iJV$  as the (complex) extremal vector field.
We assume the background metric $\omega$ is invariant, $L_{JV}\omega=0$. It follows that $X$ has a real \emph{holomorphic potential}, denoted as $\theta_X$, such that
\[
X=g^{j\bar k} \frac{\p \theta_X}{\p \bar z_k}\frac{\p}{\p z_j},
\]
where $\theta_X$ is normalized such that $\int_M \theta_X \omega^n=0$
We are interested in the \emph{invariant metrics}.  Consider the normalized space of K\"ahler potentials which is invariant under the action of $JV$, 
\[
\cH_{X}=\{\phi\in \cH: L_{JV}\phi=0\}
\]
Note that $JV$  induces a compact subtorus $T$ in $G$ and hence $\cH_X$ denotes the potentials which are invariant under the $T$-action. We also use the notation
$\cH^0_X=\cH_X\cap \cH_0$. In general, for any compact subgroup of $\text{Aut}_0$, denote $\cH_P$ be $P$-invariant elements in $\cH_X$.
We also mention that $G$ acts on $\cH_0$ and it preserves $\cH_X^0:=\cH_X\cap \mathbb{I}^{-1}(0)$. 
For any $\phi\in \cH_X$, we denote the corresponding holomorphic potential as $\theta_X(\phi)$ and we have the following
\begin{equation}\label{hpotential}
\theta_X(\phi)=\theta_X+V(\phi)\end{equation}
It is straightforward to check that the normalization condition holds given \eqref{hpotential}, \[\int_M \theta_X(\phi) \omega_\phi^n=0.\]
A metric $\omega_\phi$ is extremal if and only if it satisfies the equation
\begin{equation}\label{extremal1}
R_\phi-\underline{R}=\theta_X(\phi). 
\end{equation}

\subsection{The metric completion $(\overline\cH, d_1)$}
Generalizing the Mabuchi metric on $\cH$ (which corresponds to $d_2$ in Darvas' setting), T. Darvas \cite{D1} introduced the notion of $d_1$ distance on $\cH$ (more generally $d_p$ for $p\in [1, \infty)$),
\[
\|\xi\|_{1, \phi}=\int_M |\xi|\frac{\omega_\phi^n}{n!}, \forall \xi\in T_\phi\cH=C^\infty(M). 
\]
One can then define the length of curves in $\cH$ and the distance function $d_1(\phi_0, \phi_1)$ for $\phi_0, \phi_1$ in $\cH$. The later is imply the infimum of the length of the curves connecting $\phi_0$ and $\phi_1$.
Darvas proved that $(\cH, d_1)$ is a metric space, built upon Chen's result \cite{chen00} that $(\cH, d_2)$ (with the Mabuchi metric) is a metric space. 
Guegj and Zeriahi \cite{GZ} introduced the finite energy space of $\omega$-plurisubharmonic functions, for any $p\geq 1$,
\begin{equation}
\cE^p=\{\phi\in\text{psh}(M, \omega): \int_M |\phi|^p\omega_\phi^n<\infty, \int_M \omega_\phi^n=\int_M \omega^n\}
\end{equation}
V. Guedj conjectured \cite{G1} that the metric completion with the Mabuchi metric is precisely $\cE^2$. T. Darvas \cite{D1, D2} proved Guedj's conjecture. Indeed, he proved that the metric completion of $\cH$ with respect to $d_p$ for any $p$  is $\cE^p$, with the following characterization for $(\cE^p, d_p)$, which we only state for $p=1$ for our purpose, 

\begin{thm}[Darvas]\label{dar1}There exists a constant $C>1$ depending only on $n$ such that
\begin{equation}
C^{-1}I(u, v)\leq d_1(u, v)\leq C I_1(u, v), u, v\in \cH
\end{equation}
where we define
\[
I_1(u, v)=\int_M |u-v|\left(\frac{\omega_u^n}{n!}+\frac{\omega_v^n}{n!}\right)
\]
\end{thm}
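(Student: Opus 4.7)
The strategy is to establish the two quasi-isometric inequalities separately, working first with smooth $u,v \in \cH$ and extending to the full completion $\cE^1$ at the end by $d_1$-density. For the upper bound $d_1(u,v) \le C\, I_1(u,v)$, the key case is that of comparable potentials $u \le v$: along the affine path $\varphi_t = (1-t)u + tv \in \cH$, one has $\dot\varphi_t = v - u \ge 0$ and $\omega_{\varphi_t} = (1-t)\omega_u + t\omega_v$, which gives
\[
d_1(u,v) \le L_1(\varphi) = \int_0^1 \!\!\int_M (v-u)\, \frac{\omega_{\varphi_t}^n}{n!}\, dt \le C_n\, I_1(u,v)
\]
after multilinear expansion of $\omega_{\varphi_t}^n$ and integration-by-parts comparisons between the mixed Monge--Amp\`ere forms $\omega_u^{n-k}\wedge\omega_v^k$ and the pure forms $\omega_u^n, \omega_v^n$ tested against the non-negative function $v-u$. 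For general $u, v \in \cH$, I would factor through the rooftop envelope $P(u,v) := \sup^*\{ w \in \mathrm{PSH}(M,\omega) : w \le u \text{ and } w \le v\}$, which lies in $\cE^1$ with $P(u,v)\le u,v$, and apply the triangle inequality
\[
d_1(u,v) \le d_1(u, P(u,v)) + d_1(P(u,v), v)
\]
to reduce to two monotone segments, treating each term by the monotone estimate after Demailly regularization of $P(u,v)$ from below.

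For the lower bound $d_1(u,v) \ge C^{-1}\, I_1(u,v)$, I would pass to Chen's $C^{1,1}$ geodesic $\varphi_t$ joining $u$ and $v$, via its Blocki-type $\varepsilon$-regularizations $\varphi_t^\varepsilon$ solving $(\omega + \sqrt{-1}\p\bp\Phi)^{n+1} = \varepsilon\, \Omega^{n+1}$ on $M \times [0,1]\times S^1$. The essential input is a conservation law: for any convex $f$,
\[
\frac{d}{dt}\int_M f(\dot\varphi_t^\varepsilon)\, \omega_{\varphi_t^\varepsilon}^n = O(\varepsilon),
\]
obtained by integrating by parts against the $\varepsilon$-geodesic equation $(\ddot\varphi_t^\varepsilon - |\n\dot\varphi_t^\varepsilon|^2_{\varphi_t^\varepsilon})\omega_{\varphi_t^\varepsilon}^n = \varepsilon\, \Omega^{n+1}/dt$. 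Taking $f(s)=s^+$ and $f(s)=s^-$ and sending $\varepsilon\to 0$ yields
\[
L_1(\varphi) = \int_M |\dot\varphi_0|\,\omega_u^n = \int_M |\dot\varphi_1|\,\omega_v^n.
\]
A comparison principle exploiting convexity of $t \mapsto \varphi_t$ then shows that $\dot\varphi_0 \le v-u \le \dot\varphi_1$ in the sense of distributions, which combined with the conservation identity produces $L_1(\varphi) \ge c_n\, I_1(u,v)$, and hence the desired lower bound since $L_1(\varphi) = d_1(u,v)$ along the geodesic.

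The main obstacle is that Chen's geodesic has $\ddot\varphi_t \in L^\infty$ only, so $\omega_{\varphi_t}^n$ is merely bounded and the conservation law cannot be differentiated directly at the limit level. The technical heart, following Darvas \cite{D1, D2}, is to establish every monotonicity identity at the smooth $\varepsilon$-level, secure a priori $C^{1,1}$ estimates uniform in $\varepsilon$, and pass to the limit using lower semicontinuity together with Guedj--Zeriahi finite-energy theory. The final extension from smooth $u,v\in\cH$ to arbitrary $u,v\in\cE^1$ requires monotone approximation in the Berman--Boucksom--Guedj--Zeriahi framework and the joint lower semicontinuity of $d_1$ and $I_1$ along such approximations.
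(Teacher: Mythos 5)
First, a point of reference: the paper does not prove this statement at all --- it is quoted as a theorem of Darvas with a citation to \cite{D1, D2} --- so your proposal can only be measured against Darvas's actual argument. Your overall architecture (affine paths and envelopes for the upper bound, the $\varepsilon$-geodesic ``conservation law'' $\frac{d}{dt}\int_M f(\dot\varphi_t^\varepsilon)\,\omega_{\varphi_t^\varepsilon}^n=O(\varepsilon)$ for the lower bound) is indeed the right skeleton and matches Darvas's strategy. The upper bound is essentially fine, though routing it through $P(u,v)$ forces you to control $\int_M(u-P)\,\omega_P^n$, which requires the structure theorem for the Monge--Amp\`ere measure of the rooftop envelope (namely $\omega_{P(u,v)}^n\leq \mathbf{1}_{\{P=u\}}\omega_u^n+\mathbf{1}_{\{P=v\}}\omega_v^n$); you do not mention this, and Demailly regularization alone will not produce it. It is simpler to go up to $\max(u,v)$, which is $\omega$-psh, since then the dominant mixed term in the monotone estimate involves $\omega_u^n$ (resp.\ $\omega_v^n$) and one gets $d_1(u,\max(u,v))\leq \frac{1}{n!}\int_{\{v>u\}}(v-u)\,\omega_u^n\leq I_1(u,v)$ directly.

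The genuine gap is in the lower bound. From convexity you get $\dot\varphi_0\leq v-u\leq\dot\varphi_1$, and combined with $d_1(u,v)=\int_M|\dot\varphi_0|\,\omega_u^n=\int_M|\dot\varphi_1|\,\omega_v^n$ this yields only
\[
2\,d_1(u,v)\;\geq\;\int_M (u-v)^{+}\,\omega_u^n+\int_M (v-u)^{+}\,\omega_v^n ,
\]
because $\dot\varphi_0\leq v-u$ controls $|\dot\varphi_0|$ from below only where $u>v$, and symmetrically at $t=1$. The two cross terms $\int_M(v-u)^{+}\omega_u^n$ and $\int_M(u-v)^{+}\omega_v^n$, which are part of $I_1(u,v)$, are not reached by this argument. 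The failure is not cosmetic: take $u=0$ and $v\geq 0$; then your endpoint estimate gives $d_1(0,v)\geq \frac{1}{n!}\int_M v\,\omega_v^n$, whereas $I_1(0,v)$ is dominated by $\int_M v\,\omega^n$, and integration by parts shows $\int_M v\,\omega^n\geq\int_M v\,\omega_v^n$ with no reverse inequality in general. Closing this requires the additional machinery Darvas develops: the identity $d_1(u,v)=\mathbb{I}(v)-\mathbb{I}(u)$ (Aubin--Mabuchi energy difference) for comparable potentials, which captures \emph{all} mixed terms $\int(v-u)\,\omega_u^k\wedge\omega_v^{n-k}$ including the one against $\omega^n_{\min}$, together with the Pythagorean formula $d_1(u,v)=d_1(u,P(u,v))+d_1(P(u,v),v)$ and the envelope measure structure to transfer these estimates to a general pair. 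You should also flag that the identity $L_1(\varphi)=d_1(u,v)$ along the $C^{1,1}$ geodesic is itself a nontrivial theorem of Darvas rather than an input you may assume.
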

We will also use the following notations
\[
\cE^1_0=\cE^1\cap \mathbb{I}^{-1}(0), \cE^1_P=\{\phi\in \cE^1: \phi\; \text{is invariant under the action of}\; P\}, \cE^1_{0, P}=\cE^1_P\cap \mathbb{I}^{-1}(0).\]
We note that $\text{Aut}_0(M)$ acts on the K\"ahler forms by pullback, for $\sigma\in \text{Aut}_0(M)$, $\sigma^{*}\omega_\phi=\omega_\psi$.
 Hence it induces an $\text{Aut}_0(M)$ action on $\cH_0$ (instead of $\cH$, since we need to specify the choice of the constant;  we also use the notation $\psi=\sigma[\phi]$ to indicate the action). The $\text{Aut}_0(M)$ action on $\cH_0$ is $d_1$-isometric and it extends to its metric completion $(\cE^1_0, d_1)$, see \cite{DR}[Lemma 5.10]. Note that $G$-action preserves $\cH_X^0$, and it extends to its metric completion $(\cE^1_{0, X}, d_1)$.

\subsection{The modified $\cK$-energy, its convexity and the properness}
We recall several well-known functionals  in K\"ahler geometry. Since notations are not used uniformly in literature, we include some details and our notations are mostly consistent with \cite{PG}[Chapter 4].  Fix a base metric $\omega$. We recall the $\mathbb{I}$-functional defined above,
\[
\mathbb{I}_\omega(\phi)=\frac{1}{(n+1)!}\int_M \phi \sum_{k=0}^n \omega^k\wedge \omega^{n-k}_\phi.
\]
When the base metric $\omega$ is changed to $\omega_\psi$, we write $\omega_\phi=\omega_\psi+\sqrt{-1}\p\bar\p (\phi-\psi)$ and we have
\[
\mathbb{I}_{\omega_\psi}(\phi)=\frac{1}{(n+1)!}\sum_{k=0}^n\int_M (\phi-\psi) \omega^k_{\psi}\wedge \omega_\phi^{n-k}
\]
For a given real $(1, 1)$ form $\chi$, we also define 
\[
\mathbb{I_{\omega, \chi}}(\phi)=\frac{1}{n!}\int_M \phi \sum_{k=0}^{n-1}\chi\wedge \omega^k\wedge \omega_\phi^{n-1-k}.
\]
We recall the so-called $\mathbb{J}$-functional, with the base metric $\omega$,
\begin{equation}\label{j0}
\mathbb{J}_\omega(\phi)=\sum_{k=0}^n\frac{1}{(n+1)!}\int_M \phi \omega^{n-k}\wedge\omega_\phi^k-\frac{1}{n!}\int_M \phi\omega^n_\phi.
\end{equation}
When the base metric $\omega$ is clear, we simply write $\mathbb{J}_\omega=\mathbb{J}$. It can be characterized by its derivative, 
\begin{equation}
\frac{d \mathbb{J}(\phi)}{d t}=\int_M \frac{\p \phi}{\p t} (\text{tr}_\phi \omega-n) \frac{\omega_\phi^n}{n!}
\end{equation}
We have the relation
\[
\mathbb{J}(\phi)=\mathbb{I}(\phi)-\frac{1}{n!}\int_M \phi\omega^n_\phi.
\]
Note that $\mathbb{I}_\omega(\phi)$ is a functional on $\phi\in \cH$, while $\mathbb{J}_\omega(\phi)$ does not depend on the normalization condition on $\phi$, hence a functional on K\"ahler metrics.
It is often written as
$\mathbb{J}_\omega(\omega_\phi)$. 
For a given real $(1, 1)$ form $\chi$, we also define $\mathbb{J}_{\omega, \chi}$ by
\begin{equation}\label{j1}
\begin{split}
\mathbb{J}_{\omega, \chi}(\phi)=&\frac{1}{n!}\int_M \phi \sum_{k=0}^{n-1}\chi\wedge \omega^k\wedge \omega_\phi^{n-1-k}-\frac{1}{(n+1)!}\int_M \underline{\chi} \phi \sum_{k=0}^n \omega^k\wedge \omega_\phi^{n-k}\\
=&\mathbb{I}_\chi(\phi)-\underline{\chi} \mathbb{I}(\phi)
\end{split}
\end{equation}
where $\underline \chi$ takes the form
\[
\underline \chi=\text{Vol}^{-1}(M, [\omega]) \int_M \chi\wedge \frac{\omega^{n-1}}{(n-1)!}. 
\]
We also define Aubin's $I$-functional and $J$-functional  as follows,
\begin{equation}\label{ij1}
\begin{split}
I_\omega(\phi):=I(\omega, \omega_\phi)=\frac{1}{n!}\int_M \phi\left(\omega^n-\omega_\phi^n\right)\\
J_\omega(\phi):=J(\omega, \omega_\phi)=\frac{1}{n!}\int_M\phi \omega^n-\mathbb{I}_\omega(\phi). 
\end{split}
\end{equation}
We have the following well-known facts, see for example \cite{PG}[Proposition 4.2.1],
\begin{equation}\label{ij2}
0\leq \frac{1}{n+1}I(\phi)\leq J(\phi)\leq \frac{n}{n+1}I(\phi)
\end{equation}
It follows that,
\begin{equation}\label{ij3}
\frac{1}{n+1} I(\phi)\leq \mathbb{J}(\phi)=I(\phi)-J(\phi)\leq \frac{n}{n+1}I(\phi)
\end{equation}
Now we recall that the Mabuchi's $\cK$-energy can be characterized by its variation,
\[
\delta K=-\int_M \delta \phi (R_\phi-\underline R)\frac{\omega_\phi^n}{n!},
\]
where $\underline{R}$ is the average of the scalar curvature.
Following Chen \cite{chen01}, the $\cK$-energy reads,
\begin{equation}\label{kenergy0}
\cK_\omega(\phi)=H_\omega(\phi)+\mathbb{J}_{\omega, -Ric} (\phi),
\end{equation}
where $H_\omega(\phi)$ is the entropy with the form
\[
H_\omega(\phi)=\int_M \log \left(\frac{\omega_\phi^n}{\omega^n}\right) \frac{\omega_\phi^n}{n!}.
\]
In particular, the $\mathbb{J}_{-Ric}$-functional takes the form
\[
\begin{split}
\mathbb{J}_{-Ric}(\phi)=&\frac{n\underline{R}}{(n+1)!}\int_M \phi \sum_{k=0}^n \omega^k\wedge \omega_\phi^{n-k}-\frac{1}{n!}\int_M \phi \sum_{k=0}^{n-1}Ric\wedge \omega^k\wedge \omega_\phi^{n-1-k}\\
=& \underline{R} \mathbb{I}(\phi)-\mathbb{I}_{Ric}(\phi)
\end{split}
\]

Now we recall the definition of the modified $\cK$-energy \cite{Guan, Simanca}.
For metrics in $\cH_X$,  the modified $\cK_X$ energy is characterized by
\begin{equation}\label{kenergy1}
\delta \cK_X=-\int_M \delta \phi (R_\phi-\underline R-\theta_X(\phi)) \frac{\omega_\phi^n}{n!}
\end{equation}
Hence the modified $\cK$-energy $\cK_X$ takes the form
\begin{equation}
\cK_X(\phi)=H(\phi)+\mathbb{J}_{-Ric}(\phi)+\mathbb{J}^X(\phi),
\end{equation}
where the functional $\mathbb{J}^X(\phi)$ takes the form, for a path $\phi_t$ connecting $0$ and $\phi$ in $\cH_X$, 
\[
\mathbb{J}^X(\phi)=\int_0^1 dt \int_M \dot\phi_t \theta_X(\phi_t)\frac{\omega^n_{\phi_t}}{n!}
\]
An important fact about the $\cK$-energy  ($\cK_X$) is that they are convex along $C^{1, 1}$ geodesics connecting points in $\cH$ (conjectured by Chen), proved by Berman-Berndtson \cite{BB} (see also  Chen-Li-Paun \cite{CLP}) 
\begin{thm}[Berman-Berndtsson]The $\cK$-energy $\cK(\phi_t)$  is convex in $t$ along the $C^{1, 1}$ geodesics $\phi_t$ connecting $\phi_0, \phi_1\in \cH$, for $t\in [0, 1]$. 
\end{thm}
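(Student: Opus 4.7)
The plan is to view the weak geodesic as a holomorphic family over the strip $S=\{\tau\in\C:0<\mathrm{Re}\,\tau<1\}$ and to apply Berndtsson's positivity-of-direct-image theorem. First I extend $\phi_t$ to the $S^1$-invariant function $\Phi$ on $M\times S$ depending only on $t=\mathrm{Re}\,\tau$; by Chen's theorem, $\Omega_\Phi:=\pi_M^{*}\omega+\sqrt{-1}\p\bar\p\Phi$ is a closed, semipositive $(1,1)$-current on $M\times S$ satisfying the homogeneous Monge-Amp\`ere equation $\Omega_\Phi^{n+1}=0$, with $\Phi\in C^{1,1}_{\mathrm{loc}}$. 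Since $\Phi$ depends only on $t$, $\tau$-plurisubharmonicity of $\cK(\phi_\tau)$ is equivalent to $t$-convexity of $\cK(\phi_t)$.

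For orientation, I would first recall the classical smooth-geodesic identity: if $\phi_t$ is smooth with $\omega_{\phi_t}>0$, the variational formula $\delta\cK=-\int\delta\phi\,(R_\phi-\underline R)\,\omega_\phi^n/n!$ together with the geodesic equation $\ddot\phi_t=|\p\dot\phi_t|^2_{\phi_t}$ yields, after integration by parts,
\[
\ddot\cK(\phi_t)=\int_M|\cD\dot\phi_t|^2_{\phi_t}\,\frac{\omega_{\phi_t}^n}{n!}\geq 0,
\]
where $\cD=\bar\p\nabla^{1,0}$ is the Lichnerowicz operator. This proves convexity in the smooth case, but breaks down for Chen's $C^{1,1}$ geodesic, along which $\omega_{\phi_t}^n$ may degenerate and $\dot\phi_t$ is only bounded.

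The Berman-Berndtsson strategy replaces the pointwise Lichnerowicz identity by a global direct-image argument. Using the decomposition $\cK=H_\omega+\mathbb{J}_{\omega,-Ric}$ of \eqref{kenergy0}, I would express $\cK(\phi_\tau)$, modulo $\tau$-pluriharmonic pieces coming from $\mathbb{I}$ and $\mathbb{I}_{Ric}$ (which by direct second-variation computations behave affinely on smooth geodesics and extend continuously to $\cE^1$), as minus the logarithm of the $L^2$-norm of a fixed holomorphic section of the relative canonical bundle of $\pi_S:M\times S\to S$, with weight determined by $\Omega_\Phi$. Berndtsson's theorem, applied to the family $\pi_S$ carrying the semipositive relative K\"ahler form $\Omega_\Phi$, asserts that such log-norms are plurisubharmonic in $\tau$; unwinding the identification yields $\tau$-plurisubharmonicity of $\cK(\phi_\tau)$, hence convexity in $t$.

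The principal obstacle is the passage from smooth to $C^{1,1}$ regularity: along Chen's geodesic, $\omega_\phi^n$ can vanish, $H_\omega$ is only a lower-semicontinuous extension to $\cE^1$, and the direct-image argument is cleanest on strictly positive $\Omega_\Phi$. The standard remedy, following Berman-Berndtsson, is to approximate $\Phi$ by smooth solutions $\Phi^{\ve}$ of the inhomogeneous equation $\Omega_{\Phi^{\ve}}^{n+1}=\ve\,\pi_M^{*}\omega^n\wedge\sqrt{-1}\,d\tau\wedge d\bar\tau$ with matching boundary data, apply the direct-image argument to deduce convexity of $t\mapsto\cK(\phi_t^{\ve})$, and then let $\ve\to 0$, using Chen's uniform $C^{1,1}$ bounds, $C^0$ convergence $\phi_t^{\ve}\to\phi_t$, weak convergence of the Monge-Amp\`ere measures $\omega_{\phi_t^{\ve}}^n\to\omega_{\phi_t}^n$, and crucially the lower semicontinuity of the entropy $H_\omega$ under this weak convergence, which together preserve the convexity inequality in the limit.
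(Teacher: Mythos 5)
The paper does not actually prove this statement: it is quoted from Berman--Berndtsson \cite{BB} (see also \cite{CLP}), so there is no in-paper argument to compare against, and your sketch must be judged against those proofs. Your skeleton is the right one --- the $S^1$-invariant extension to the strip, the homogeneous Monge--Amp\`ere equation $\Omega_\Phi^{n+1}=0$, the decomposition $\cK=H_\omega+\mathbb{J}_{\omega,-Ric}$, approximation of the weak geodesic, and lower semicontinuity of the entropy to push the convexity inequality to the limit (that last logical step is sound: lsc at interior times plus fixed endpoint data does preserve $\cK(\phi_t)\leq(1-t)\cK(\phi_0)+t\cK(\phi_1)$). But the two steps you delegate to ``Berndtsson's theorem'' are precisely where the theorem is hard, and as written they do not go through.

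First, the Mabuchi functional is not, modulo $\tau$-pluriharmonic terms, of the form $-\log\|s\|^2_{L^2}$ for a fixed holomorphic section of the relative canonical bundle; that identification is the mechanism behind convexity of the \emph{Ding} functional on Fano manifolds, where the fiberwise integral $\int_M e^{-\phi_\tau}$ appears and positivity of direct images applies directly. The entropy term $H_\omega(\phi)=\int_M\log(\omega_\phi^n/\omega^n)\,\omega_\phi^n$ is a nonlinear functional of the Monge--Amp\`ere measure and admits no such Bergman-kernel representation; Berman--Berndtsson treat it by a genuinely different device (a variational, Legendre-type representation of the relative entropy as a supremum of functionals that are affine in $\omega_{\phi_\tau}^n$, whose $\tau$-Laplacians are computed as fiber integrals against $\Omega_\Phi$). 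Second, and more seriously, the claim that one can ``apply the direct-image argument to deduce convexity of $t\mapsto\cK(\phi_t^{\ve})$'' along the $\ve$-geodesics is unsupported. Along a smooth non-geodesic path the second variation of $\cK$ is $\int_M|\cD\dot\phi|^2_{\phi}\,\omega_{\phi}^n/n!-\int_M c(\Phi)(R_{\phi}-\underline R)\,\omega_{\phi}^n/n!$ with geodesic defect $c(\Phi)=\ddot\phi-|\p\dot\phi|^2_{\phi}$; for the $\ve$-geodesic $c(\Phi^{\ve})=\ve\,\omega^n/\omega_{\phi_t^{\ve}}^n$, so the error is $-\ve\int_M(R_{\phi_t^{\ve}}-\underline R)\,\omega^n/n!$, which involves four derivatives of $\phi_t^{\ve}$ and is not controlled by the uniform $C^{1,1}$ bounds --- it is not $o(1)$ for free. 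Showing that this term nevertheless vanishes in the limit is essentially the content of the Chen--Li--P\u{a}un proof, while Berman--Berndtsson restructure the entropy step to avoid it altogether. As it stands, your proposal assumes the crux of the theorem; you should either import one of these two arguments explicitly or replace the entropy step.
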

Since $\mathbb{J}^X(\phi)$ is linear along a $C^{1, 1}$-geodesic, hence the modified $\cK$-energy $\cK_X$ is convex along the $C^{1, 1}$ geodesics in $\cH_X$ (see \cite{BB}[Section 4]). 
As a direct consequence of the convexity, 
\begin{cor}Suppose an extremal metric exists in $(M, [\omega], J)$ with extremal vector field $V$. Then an extremal metric minimizes the modified $\cK$-energy $\cK_X$ over $\cH_X$. 
\end{cor}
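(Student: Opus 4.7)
The plan is to combine the variational characterization of extremal metrics (equation \eqref{extremal1} together with \eqref{kenergy1}) with the convexity of $\cK_X$ along $C^{1,1}$ geodesics in $\cH_X$. In finite dimensions, a convex function on a convex set is minimized exactly at its critical points; the same mechanism will apply here once the endpoint-derivative issue is handled. Concretely, let $\phi_0\in \cH_X$ be a smooth extremal potential (so that $R_{\phi_0}-\underline R-\theta_X(\phi_0)=0$), and let $\phi_1\in \cH_X$ be arbitrary; I will show $\cK_X(\phi_1)\geq \cK_X(\phi_0)$.

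First I would invoke Chen's theorem to produce the unique $C^{1,1}$ geodesic $\phi_t$ in $\cH$ joining $\phi_0$ and $\phi_1$. Since both endpoints are invariant under the compact torus $T$ generated by $JV$, and the geodesic equation is $T$-equivariant (e.g.\ via the Dirichlet problem for the degenerate Monge--Amp\`ere equation on $M\times \{0\leq \operatorname{Re} s\leq 1\}$), the entire path lies in $\cH_X$. Next, the theorem of Berman--Berndtsson gives convexity of $\cK$ along $\phi_t$, and since $\mathbb{J}^X(\phi_t)$ is affine in $t$ along any $C^{1,1}$ geodesic (as is standard and noted in the excerpt), the function $f(t):=\cK_X(\phi_t)$ is convex on $[0,1]$.

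The crucial step is to evaluate the one-sided derivative $f'(0^+)$. Because $\phi_0$ is smooth and $\phi_t\in C^{1,1}$, the variational formula \eqref{kenergy1} yields
\begin{equation*}
f'(0^+) \;=\; -\int_M \dot\phi_0\bigl(R_{\phi_0}-\underline R-\theta_X(\phi_0)\bigr)\frac{\omega_{\phi_0}^n}{n!} \;=\; 0,
\end{equation*}
since the integrand vanishes identically at the extremal $\phi_0$. Convexity of $f$ then gives $f(t)\geq f(0)+t\,f'(0^+)=f(0)$ for all $t\in[0,1]$, and specializing to $t=1$ completes the argument.

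The main obstacle I anticipate is a careful justification of the variational formula for $f'(0^+)$ at the smooth endpoint of a merely $C^{1,1}$ geodesic; once one knows that $\dot\phi_t$ is bounded and continuous up to $t=0$ and that $\omega_{\phi_t}^n\to \omega_{\phi_0}^n$ weakly, the usual integration-by-parts computation goes through, but some care with the regularity is needed. Preservation of $T$-invariance along the weak geodesic is a secondary technical point, handled either by symmetrizing the Perron envelope defining $\phi_t$ or by uniqueness of $C^{1,1}$ geodesics applied to $\sigma^*\phi_t$ for $\sigma\in T$.
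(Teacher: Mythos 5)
Your proposal is correct and is exactly the argument the paper has in mind: the corollary is stated there as "a direct consequence of the convexity," and your convexity-plus-vanishing-endpoint-derivative reasoning is the standard way to make that precise. The two technical points you flag (the one-sided derivative formula of $\cK$ at a smooth endpoint of a $C^{1,1}$ geodesic, and $T$-invariance of the geodesic via uniqueness of the Monge--Amp\`ere Dirichlet problem) are indeed the only things to check, and both are supplied by the cited work of Berman--Berndtsson.
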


In \cite{BDL}, Berman-Davars-Lu extended the $\cK$-energy to $(\cE^1, d_1)$ and proved that the $\cK$-energy is convex along the geodesics in $\cE^1$. 
\begin{thm}[Berman-Davars-Lu]\label{bdl}The $\cK$-energy can be extended to a functional $\cK: \cE^1\rightarrow \R\cup\{+\infty\}$ using the formula \eqref{kenergy0}.
Such a $\cK$-energy in $\cE^1$ is the greatest $d_1$-lsc extension of $\cK$-energy on $\cH$. Moreover, $\cK$-energy is convex along the finite energy geodesics of $\cE^1$. 
\end{thm}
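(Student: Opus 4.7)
My plan is to prove Theorem~\ref{bdl} in three stages: first, extend each piece of the $\cK$-energy to $\cE^1$; second, establish $d_1$-lower semicontinuity and maximality among such extensions; and third, tackle convexity along finite-energy geodesics, which is the technical heart of the statement.

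For the extension, I decompose $\cK_\omega = H_\omega + \mathbb{J}_{\omega,-Ric}$ with $\mathbb{J}_{\omega,-Ric}(\phi) = \underline{R}\,\mathbb{I}(\phi) - \mathbb{I}_{Ric}(\phi)$. For $\phi \in \cE^1$, the non-pluripolar Monge--Amp\`ere measure $\omega_\phi^n$ is a well-defined Borel measure of total mass $\int_M \omega^n$ by Guedj--Zeriahi, so the intersection integrals defining $\mathbb{I}(\phi)$ and $\mathbb{I}_{Ric}(\phi)$ extend to $\cE^1$; using Darvas' estimate in Theorem~\ref{dar1} these extensions are in fact $d_1$-continuous. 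The entropy is extended in the standard way: $H_\omega(\phi) = \int_M \log(\omega_\phi^n/\omega^n)\,\omega_\phi^n/n!$ when $\omega_\phi^n \ll \omega^n$ with $L\log L$ density, and $H_\omega(\phi) = +\infty$ otherwise. For lower semicontinuity, along a $d_1$-convergent sequence $\phi_j \to \phi$ in $\cE^1$ one has weak convergence $\omega_{\phi_j}^n \to \omega_\phi^n$, and the relative entropy functional is weakly lower semicontinuous on probability measures; combined with $d_1$-continuity of $\mathbb{J}_{-Ric}$, this yields the $d_1$-lsc property on $\cE^1$. Maximality follows by a soft argument: if $F$ is any $d_1$-lsc extension of $\cK|_{\cH}$, then $d_1$-density of $\cH$ in $\cE^1$ and $F \leq \cK$ on $\cH$ force
\[
F(\phi) \leq \liminf_j F(\phi_j) = \liminf_j \cK(\phi_j)
\]
for every $d_1$-approximation $\phi_j \in \cH$ of $\phi$; taking infimum over such approximating sequences recovers the extension constructed above.

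The main obstacle is convexity along finite-energy geodesics. My plan is to split $\cK = H + \mathbb{J}_{-Ric}$ and argue separately that (i) $\mathbb{J}_{-Ric}$ is affine along the weak geodesic $\phi_t$, and (ii) $H$ is convex along $\phi_t$. For (i), since $\phi_t$ is a bounded $\omega$-psh solution of the homogeneous complex Monge--Amp\`ere equation on $M \times A$ for $A \subset \C$ a strip, the functional $t \mapsto \mathbb{I}_\chi(\phi_t)$ has vanishing distributional second derivative for every smooth closed $(1,1)$-form $\chi$; this is checked by approximation with bounded subgeodesics together with Bedford--Taylor continuity of mixed Monge--Amp\`ere masses. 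For (ii), I would follow Berman--Berndtsson \cite{BB}: convexity of $H$ along $C^{1,1}$ geodesics is obtained from positivity of direct images applied to the complexification of $\phi_t$, and the finite-energy case is then reached by approximating the weak geodesic by smoother subgeodesics and transferring convexity to the limit via the already-established lsc of the entropy. The delicate issue is making the approximations behave coherently with both $H$ and $\mathbb{J}_{-Ric}$ simultaneously, and for this step I would import the Berman--Berndtsson argument from \cite{BB} rather than attempt an independent derivation.
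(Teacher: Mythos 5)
A preliminary remark: the paper offers no proof of Theorem~\ref{bdl}; it is quoted from Berman--Darvas--Lu \cite{BDL} and used as a black box, so there is no in-paper argument to measure your sketch against. Judged on its own, your extension step (non-pluripolar Monge--Amp\`ere measures for the $\mathbb{I}$-type terms, relative entropy set to $+\infty$ off the $L\log L$ locus) and your lower semicontinuity step (weak convergence of $\omega_{\phi_j}^n$ under $d_1$-convergence plus weak lsc of relative entropy) follow the correct lines. However, your maximality argument is not the ``soft argument'' you describe. Lower semicontinuity of a competitor $F$ together with density of $\cH$ only yields $F(\phi)\leq \liminf_j \cK(\phi_j)$ for \emph{every} approximating sequence; to conclude $F(\phi)\leq \cK(\phi)$ you must produce \emph{one} sequence $\phi_j\in\cH$ with $d_1(\phi_j,\phi)\to 0$ \emph{and} $H(\phi_j)\to H(\phi)$, so that $\liminf_j\cK(\phi_j)=\cK(\phi)$. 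This entropy-convergent approximation is exactly \cite{BDL}[Lemma 3.1] (whose invariant analogue this paper proves as Lemma~\ref{approx3}); it requires truncating the density $\omega_\phi^n/\omega^n$, solving a family of Calabi--Yau equations, and using Darvas' stability results. Without it, ``taking infimum over such approximating sequences recovers the extension constructed above'' is an unproved identification of two a priori different quantities.

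The more serious problem is step (i) of your convexity argument: the claim that $t\mapsto \mathbb{I}_\chi(\phi_t)$ has vanishing distributional second derivative along a weak geodesic for \emph{every} smooth closed $(1,1)$-form $\chi$ is false, and hence $\mathbb{J}_{-Ric}=\underline{R}\,\mathbb{I}-\mathbb{I}_{Ric}$ is \emph{not} affine along geodesics. Writing the geodesic as an $S^1$-invariant $\pi^*\omega$-psh function $\Phi$ on $M\times A$ with $(\pi^*\omega+dd^c\Phi)^{n+1}=0$, one has $dd^c_t\,\mathbb{I}(\phi_t)=p_*\bigl((\pi^*\omega+dd^c\Phi)^{n+1}\bigr)=0$, so $\mathbb{I}$ alone is affine; but $dd^c_t\,\mathbb{I}_\chi(\phi_t)=p_*\bigl(\pi^*\chi\wedge(\pi^*\omega+dd^c\Phi)^{n}\bigr)$, which for $\chi\geq 0$ is a nonnegative and generally nonzero measure, and for $\chi=\mathrm{Ric}(\omega)$ has no sign. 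Concretely, along a smooth geodesic ($\ddot\phi=|\p\dot\phi|^2_{\phi}$) the second derivative of $\mathbb{I}_\chi$ is $\int_M\bigl(|\p\dot\phi|^2_{\phi}\,\mathrm{tr}_\phi\chi-g_\phi^{i\bar l}g_\phi^{k\bar j}\chi_{k\bar l}\dot\phi_i\dot\phi_{\bar j}\bigr)\frac{\omega_\phi^n}{n!}$, which vanishes identically only for very special $\chi$; were your claim true, the functional $\mathbb{J}$ of \eqref{j0} would be affine along all geodesics, contradicting the convexity and coercivity properties of $\mathbb{J}$ that this paper uses throughout Section 3. Correspondingly, the entropy $H$ alone is \emph{not} convex along geodesics: the actual content of Berman--Berndtsson is that $dd^c_t H(\phi_t)$ dominates $p_*(\pi^*\mathrm{Ric}(\omega)\wedge\omega_\Phi^n)$, so that the defect of convexity of the energy part is absorbed by the entropy part and only the sum $\cK=H+\mathbb{J}_{-Ric}$ is convex. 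A proof organized as ``(i) energy affine $+$ (ii) entropy convex'' cannot be repaired term by term; you must treat the combination as in \cite{BB}, and then pass from bounded ($C^{1,1}$) geodesics to finite-energy geodesics using the entropy-convergent approximation above together with the lower semicontinuity you have already established.
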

Let $\cE^1_X$ be the metric completion of $\cH_X$ in $\cE^1$ with respect to $d_1$ (see \cite{DR}[Lemma 5.4]). 
We can then extend $\cK_X$ to $\cE^1_X$ such that it is convex along the geodesics in $\cE^1_X$. 
First we recall a classical result of X.H. Zhu \cite{Zhu}[Corollary 5.3], 
\begin{prop}[X.H. Zhu]\label{z1}There exists a uniform constant $C_1$ (independent of $\phi\in \cH_X$) such that
$|\theta_X(\phi)|\leq C_1$.
\end{prop}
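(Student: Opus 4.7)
The plan is to view $\theta_X(\phi)$ as the Hamiltonian potential (moment map) for the $S^1$-action generated by $JV$ on the symplectic manifold $(M,\omega_\phi)$, and to reduce the pointwise bound to the fact that the oscillation of $\theta_X(\phi)$ is a topological quantity independent of $\phi\in\cH_X$. Since $\phi$ is $JV$-invariant, the form $\omega_\phi$ is invariant under the torus $T$ generated by $JV$, and the defining relation for $\theta_X(\phi)$ shows that $d\theta_X(\phi)$ vanishes exactly on the common zero locus $F=\{JV=0\}\subset M$, which is closed, $\phi$-independent, and has finitely many connected components $\{F_\alpha\}$. On each $F_\alpha$ the function $\theta_X(\phi)$ takes a constant value $c_\alpha(\phi)$, and $\max_M\theta_X(\phi)$, $\min_M\theta_X(\phi)$ are achieved among the $\{c_\alpha(\phi)\}$.

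The central step is to show that each difference $c_\alpha(\phi)-c_\beta(\phi)$ is independent of $\phi$. Fixing a smooth curve $\gamma$ from a point of $F_\alpha$ to one of $F_\beta$, I would compute
\[
\bigl[c_\beta(\phi)-c_\alpha(\phi)\bigr]-\bigl[c_\beta(0)-c_\alpha(0)\bigr]
=\int_\gamma \iota_{JV}(\omega_\phi-\omega)
=\int_\gamma \iota_{JV}\sqrt{-1}\,\partial\bar\partial\phi.
\]
Using $\mathcal{L}_{JV}\phi=0$ together with $\sqrt{-1}\,\partial\bar\partial\phi=\tfrac12\,dd^c\phi$, Cartan's formula yields $\iota_{JV}\sqrt{-1}\,\partial\bar\partial\phi=d\eta_\phi$ with $\eta_\phi=-\tfrac12 (d^c\phi)(JV)$. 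Because $V=-J(JV)$, the real vector field $V$ vanishes on $F$ as well, so $\eta_\phi$ (which is proportional to $V(\phi)$) vanishes on $F$; hence the boundary integral equals $\eta_\phi|_{F_\beta}-\eta_\phi|_{F_\alpha}=0$, and the differences $c_\alpha(\phi)-c_\beta(\phi)$ are rigid.

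Combining these two ingredients gives $\operatorname{osc}_M \theta_X(\phi)=C_0:=\max_\alpha c_\alpha(0)-\min_\alpha c_\alpha(0)$ for every $\phi\in\cH_X$. The normalization $\int_M \theta_X(\phi)\,\omega_\phi^n=0$ forces $\min_M\theta_X(\phi)\le 0\le \max_M\theta_X(\phi)$, so the oscillation bound immediately yields the desired pointwise estimate $|\theta_X(\phi)|\le C_0$ with $C_1:=C_0$.

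The main obstacle, and essentially the only nontrivial step, is the exactness assertion $\iota_{JV}\sqrt{-1}\,\partial\bar\partial\phi=d\eta_\phi$ with a primitive $\eta_\phi$ vanishing on $F$; it is here that both the $JV$-invariance of $\phi$ and the coincidence of the zero loci of $V$ and $JV$ are indispensable, and where one has to be careful about sign conventions in translating through $d^c$. Once this rigidity of the oscillation is in hand, combining it with the normalization condition is a one-line consequence, and no further analytic input is required.
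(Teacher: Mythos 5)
Your argument is correct, but note that the paper itself gives no proof of this proposition --- it is quoted from Zhu's work --- so the comparison is with the standard argument rather than with anything in the text. That standard argument is a compressed version of yours, and the compression is worth pointing out because it removes your ``only nontrivial step'' entirely. By \eqref{hpotential} one already has the global identity $\theta_X(\phi)=\theta_X+V(\phi)$; the critical set of $\theta_X(\phi)$ is exactly $F=\{X=0\}=\{V=0\}=\{JV=0\}$, independent of $\phi$, because $\iota_X\omega_\phi=\sqrt{-1}\,\bar\partial\theta_X(\phi)$ and $\omega_\phi$ is nondegenerate; and on $F$ the correction term $V(\phi)$ vanishes identically, so $\theta_X(\phi)|_F=\theta_X|_F$ with no path integral needed. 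Since $\max_M\theta_X(\phi)$ and $\min_M\theta_X(\phi)$ are attained at critical points, this gives at once $\min_M\theta_X\le\theta_X(\phi)\le\max_M\theta_X$, i.e.\ the sharper, normalization-free bound $|\theta_X(\phi)|\le\max_M|\theta_X|$. Your Cartan-formula computation is a correct but longer detour: the primitive $\eta_\phi$ you produce is exactly $V(\phi)$ up to sign and normalization, so your rigidity of the differences $c_\alpha(\phi)-c_\beta(\phi)$ is just this identity in integrated form; the one hypothesis you use implicitly and should state is that $\mathcal{L}_{JV}$ commutes with $d^c$ because $JV$ is real holomorphic (which holds, since $X=V-\sqrt{-1}JV$ holomorphic implies $\sqrt{-1}X$ is too). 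Both routes yield the uniform constant; the direct one avoids invoking the normalization $\int_M\theta_X(\phi)\,\omega_\phi^n=0$ and the decomposition of $F$ into components altogether.
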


As a consequence, we have
\begin{prop}The functional $\mathbb{J}^X(\phi)$ on $\cH_X$ has a unique $d_1$-continuous extension to $\cE^1_X$.
\end{prop}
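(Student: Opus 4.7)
The plan is to establish that $\mathbb{J}^X$ is globally $d_1$-Lipschitz on $\cH_X$, with Lipschitz constant $C_1$ coming from the uniform bound on $|\theta_X(\phi)|$ in Proposition \ref{z1}, and then to extend by uniform continuity to the $d_1$-completion $\cE^1_X$. Uniqueness of the extension is automatic from uniform continuity, density, and completeness.

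For the Lipschitz estimate, fix $\phi_0,\phi_1 \in \cH_X$ and let $\phi_t$ ($t\in[0,1]$) be any smooth curve in $\cH_X$ joining them. Since $\mathbb{J}^X$ is path-independent, its defining variational formula combined with Proposition \ref{z1} gives
\[
|\mathbb{J}^X(\phi_1) - \mathbb{J}^X(\phi_0)| = \left|\int_0^1\!\!\int_M \dot\phi_t\, \theta_X(\phi_t)\, \frac{\omega_{\phi_t}^n}{n!}\, dt\right| \leq C_1 \int_0^1 \|\dot\phi_t\|_{1,\phi_t}\, dt,
\]
so the right-hand side is $C_1$ times the $d_1$-length of the curve. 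To identify the infimum of this length with $d_1(\phi_0,\phi_1)$, I would take $\phi_t$ to be an $\varepsilon$-geodesic in the sense of Chen, namely the smooth solution on $M\times[0,1]$ of the perturbed homogeneous complex Monge-Amp\`ere equation with boundary data $\phi_0,\phi_1$. Uniqueness of this solution together with $T$-invariance of the boundary data forces $\phi_t^\varepsilon$ itself to be $T$-invariant and hence to lie in $\cH_X$. Letting $\varepsilon\to 0$ and using the fact that the $d_1$-lengths of $\varepsilon$-geodesics converge to $d_1(\phi_0,\phi_1)$ (Chen, Darvas) yields
\[
|\mathbb{J}^X(\phi_1) - \mathbb{J}^X(\phi_0)| \leq C_1\, d_1(\phi_0,\phi_1).
\]

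With the Lipschitz estimate in hand, $\mathbb{J}^X$ is uniformly continuous on the dense subset $\cH_X \subset \cE^1_X$ with respect to $d_1$, and hence admits a unique $d_1$-continuous extension to $\cE^1_X$, which inherits the same Lipschitz constant. The main technical obstacle is precisely the need to realize $d_1(\phi_0,\phi_1)$ by curves that remain inside $\cH_X$: a priori the intrinsic $d_1$-length restricted to $T$-invariant smooth curves could exceed the extrinsic $d_1$-distance inherited from $\cE^1$. The $\varepsilon$-geodesic argument sketched above circumvents this, since uniqueness of the Dirichlet problem automatically produces curves in $\cH_X$ whose lengths approximate $d_1(\phi_0,\phi_1)$; alternatively, averaging a near-length-minimizing smooth curve in $\cH$ over the compact torus $T$ generated by $JV$ produces a curve in $\cH_X$ with the same endpoints whose length is controlled by that of the original curve, yielding the same conclusion.
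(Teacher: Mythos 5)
Your proposal is correct and follows essentially the same route as the paper: the uniform bound $|\theta_X(\phi)|\leq C_1$ from Proposition \ref{z1} gives $|\mathbb{J}^X(\phi_0)-\mathbb{J}^X(\phi_1)|\leq C_1\, l_1(\gamma)$ for any smooth curve $\gamma$ in $\cH_X$, and taking the infimum yields the $d_1$-Lipschitz bound, after which the unique continuous extension to the completion $\cE^1_X$ is automatic. You are in fact more careful than the paper on the one point it glosses over --- that the infimum of lengths of curves constrained to lie in $\cH_X$ still computes $d_1(\phi_0,\phi_1)$ --- and your primary resolution via $T$-invariant $\varepsilon$-geodesics is the right one (the averaging fallback is less immediate, since $\omega_\phi^n$ is not linear in $\phi$ and the length of the averaged curve is not obviously dominated by that of the original).
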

\begin{proof}
By definition of $\mathbb{J}^X$, if $\phi_t$ is any smooth path in $\cH_X$ with end point $\phi_0, \phi_1$
\[
\begin{split}
|\mathbb{J}^X(\phi_0)-\mathbb{J}^X(\phi_1)|&\leq \int_0^1dt\int_M |\dot \phi| |\theta_X(\phi_t)|\omega^n_{\phi_t}\\
&\leq C_1\int_0^1 dt\int_M |\dot \phi|\omega^n_{\phi_t}\leq C_1 l_1(\gamma),
\end{split}
\]
where $l_1(\gamma)$ stands for the length of the curve $\gamma$ with respect to $d_1$. Taking infimum over all such $\gamma$ we have
\[
|\mathbb{J}^X(\phi_0)-\mathbb{J}^X(\phi_1)|\leq Cd_1(\phi_0, \phi_1). 
\]
\end{proof}
As a direct consequence of Theorem \ref{bdl} and the linearity of $\mathbb{J}^X$ along geodesics in $\cE^1_X$, we have the following convexity of the modified $\cK$-energy. 
\begin{cor}The $\cK_X$-energy can be extended to a functional $\cK_X: \cE^1_X\rightarrow \R\cup\{+\infty\}$ using the formula \eqref{kenergy0}.
Thus extended, the $\cK_X$-energy in $\cE^1_X$ is the greatest $d_1$-lsc extension of $\cK_X$-energy on $\cH_X$ and $\cK_X$-energy is convex along the finite energy geodesics of $\cE^1_X$. Moreover, if an extremal metric exists with extremal vector field $V$, then it minimizes $\cK_X$ over $\cE^1_X$.  
\end{cor}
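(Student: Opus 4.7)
The plan is to exploit the decomposition $\cK_X = \cK + \mathbb{J}^X$ valid on $\cH_X$, extending each summand with already-established machinery. For the ordinary Mabuchi energy, Theorem \ref{bdl} of Berman--Darvas--Lu supplies the greatest $d_1$-lsc extension $\cK : \cE^1 \to \R \cup \{+\infty\}$, convex along finite energy geodesics; restricting to $\cE^1_X \subset \cE^1$ gives the first piece. For the extra term, the immediately preceding proposition provides the unique $d_1$-Lipschitz extension $\mathbb{J}^X : \cE^1_X \to \R$, with constant bounded by the $C_1$ of Proposition \ref{z1}. Setting $\cK_X := \cK + \mathbb{J}^X$ on $\cE^1_X$ then yields the required extension taking values in $\R \cup \{+\infty\}$.

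The greatest-lsc claim follows by a short abstract argument: if $F$ is any $d_1$-lsc extension of $\cK_X|_{\cH_X}$ to $\cE^1_X$, then $F - \mathbb{J}^X$ is a $d_1$-lsc extension of $\cK|_{\cH_X}$ to $\cE^1_X$, hence by the maximality part of Theorem \ref{bdl} bounded above by $\cK$, so $F \leq \cK_X$. Lower semicontinuity of $\cK + \mathbb{J}^X$ itself is immediate from the lsc of $\cK$ combined with the $d_1$-continuity of $\mathbb{J}^X$. For convexity along a finite energy geodesic $(\phi_t) \subset \cE^1_X$, Theorem \ref{bdl} already gives convexity of $t \mapsto \cK(\phi_t)$, so it suffices to show $t \mapsto \mathbb{J}^X(\phi_t)$ is affine. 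For smooth geodesics in $\cH_X$ this is a direct second-derivative computation using the geodesic equation, the formula $\theta_X(\phi_t) = \theta_X + V(\phi_t)$ from \eqref{hpotential}, and the fact that $JV$ preserves $\omega_{\phi_t}^n$. For a general finite energy geodesic, one approximates by smooth geodesics with $d_1$-convergent endpoints and invokes the $d_1$-continuity of $\mathbb{J}^X$ to pass to the limit.

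For the minimization claim, suppose $\omega_\phi$ with $\phi \in \cH_X$ is extremal. By the earlier corollary on $\cH_X$-minimization, $\phi$ minimizes $\cK_X$ over $\cH_X$. Given any $\psi \in \cE^1_X$, join $\phi$ to $\psi$ by the finite energy geodesic $(\phi_t)_{t \in [0,1]}$ and set $f(t) := \cK_X(\phi_t)$; this is convex on $[0,1]$ by the previous paragraph and $d_1$-lsc. The smooth derivative $-\int_M \delta \phi \,(R_\phi - \underline{R} - \theta_X(\phi)) \omega_\phi^n /n!$ vanishes at $\phi$ by the extremal equation \eqref{extremal1}. Combined with convexity and a smooth approximation of the weak geodesic $(\phi_t)$, this forces $f'(0^+) \geq 0$, so convexity yields $f$ nondecreasing on $[0,1]$, i.e. $\cK_X(\phi) \leq \cK_X(\psi)$.

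The main technical obstacle is the rigorous control of $f'(0^+) \geq 0$ along a finite energy geodesic, since the defining formula for $\delta \cK_X$ is only manifestly valid for smooth variations; one must approximate the weak geodesic by smooth paths and exploit $d_1$-lower semicontinuity of $\cK_X$ to pass to the limit. This is precisely the argument carried out for the csck case in \cite{BDL, BDL2}, and the modified case introduces no essential new difficulty because the additional term $\mathbb{J}^X$ is $d_1$-Lipschitz and affine along geodesics.
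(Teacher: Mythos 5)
Your treatment of the extension and the convexity is exactly the argument the paper intends: the paper offers no written proof, presenting the corollary as a direct consequence of the decomposition $\cK_X=\cK+\mathbb{J}^X$, Theorem \ref{bdl} for the $\cK$-part, the $d_1$-Lipschitz extension of $\mathbb{J}^X$ (via Proposition \ref{z1}), and the linearity of $\mathbb{J}^X$ along geodesics, and your write-up fills in precisely those steps. One small point worth a sentence in your maximality argument: Theorem \ref{bdl} gives maximality among $d_1$-lsc extensions from $\cH$ to $\cE^1$, while you need the restriction to $\cE^1_X$ to be the greatest lsc extension of $\cK|_{\cH_X}$; this follows from the density of $\cH_X$ in $\cE^1_X$ and the $\liminf$ description of the greatest lsc extension (ultimately the content of the $T$-invariant approximation Lemma \ref{approx3} later in the paper), but it is not literally the statement of Theorem \ref{bdl}. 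Where you genuinely diverge is the minimization claim: you join the extremal potential to an arbitrary $\psi\in\cE^1_X$ by a finite energy geodesic and try to show $f'(0^+)\ge 0$, which you rightly flag as the delicate step and outsource to \cite{BDL, BDL2}. This detour is unnecessary. Since the extremal potential minimizes $\cK_X$ over $\cH_X$ by the earlier corollary, and the extension is the greatest $d_1$-lsc extension, one has for every $\psi\in\cE^1_X$
\[
\cK_X(\psi)=\liminf_{\cH_X\ni\phi\to\psi}\cK_X(\phi)\;\ge\;\inf_{\cH_X}\cK_X,
\]
so the infimum over $\cE^1_X$ coincides with that over $\cH_X$ and is attained at the extremal metric, with no differentiability along weak geodesics required. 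Both routes are correct; the lsc route is shorter and is what the paper's phrasing suggests.
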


We need a notion of properness of the modified $\cK$-energy in terms of $d_1$  relative to $G$-action as follows. For any given metric $\omega_0$, we consider its $G$-orbit
\[
\cO_{\omega_0}=\{\phi\in \cH_0| \sigma^*\omega_0=\omega_\phi, \;\sigma\in G\}
\] 
Given any other K\"ahler metric $\omega_\varphi$ such that $\varphi\in \cH_0$,
the distance function \[d_1(\varphi, \cO_{\omega_0})=\inf_{\omega_\psi\in \cO_{\omega_0}} d_1(\varphi, \psi)\]
Note that since $G$ acts on $\cH_0$ isometrically, we know that
\[
\inf_{\omega_\psi\in \cO_{\omega_0}} d_1(\varphi, \psi)=\inf_{\sigma\in G} d_1(\varphi, \sigma[\psi])=\inf_{\sigma_1, \sigma_2\in G} d_1(\sigma_1 [\varphi], \sigma_2[\psi]). 
\]
Hence we define the $d_1$ distance relative to the action of $G$ as \[d_{1, G}(\varphi, \phi)=\inf_{\sigma\in G} d_1(\varphi, \sigma[\psi]),\]
where $\sigma[\psi]\in \cH_0$ denotes the K\"ahler potential of $\sigma^*\omega_\psi$.
We can restrict the distance $d_{1, G}$ to $\cH^0_X$ since $G$-action preserves $\cH^0_X$ and $\cH^0_X$ is a totally geodesic submanifold of $\cH_0$. 
\begin{defn}[Properness modulo  $G$]\label{rp}The modified $\cK$-energy is proper with respect to $d_{1, G}$ if the following conditions hold, 
\begin{enumerate}
\item The modified $\cK$-energy $\cK_X$ is bounded from below on $\cH_X$.
\item For any sequence $\phi_i\in \cH^0_X$, $d_{1, G}(0, \phi_i)\rightarrow \infty$ implies $\cK_X(\phi_i)\rightarrow \infty$. 
\end{enumerate}
\end{defn}

\begin{rmk}The notion of properness of $\cK$-energy was first introduced by G. Tian \cite{Tian94}, with the properness of $\cK$-energy in terms of Aubin's $J$-functional. Tian \cite{Tian97} proved that the properness of $\cK$-energy is equivalent to existence of K\"ahler-Einstein metric on Fano manifolds.  Later on Tian \cite{Tian01} has made a properness conjecture. Chen has made a properness conjecture with the properness of $\cK$-energy in terms of Mabuchi's metric $d_2$. A general properness conjecture was then clarified by Darvas-Rubinstein \cite{DR} in terms of $d_1$ and also Aubin's $J$-functional, modulo the action of $\text{Aut}_0(M)$. Our definition is  adaption of \cite{CC3}[Definition 3.1] to the extremal case. The notion of properness of modified $\cK$-energy has been studied extensively \cite{ZZ1, ZZ, LS1}.
\end{rmk}

As we shall see that the properness of $\cK_X$ with respect to $d_{1, G}$ is equivalent to the existence of a smooth extremal metric. It is often much more effective to check properness for metrics which are sufficiently symmetric. Let $K\subset \text{Aut}_0(M, V)$ be a maximal compact subgroup and we consider $K$-invariant metrics. We assume $\mathfrak{h}^0$, the Lie algebra of $\text{Aut}_0(M, V)$, satisfies the decomposition as in \eqref{calabi002}.  
We define the \emph{reductive properness} as follows,
\begin{defn}[Reductive properness]\label{rp1}
Given a maximal compact subgroup $K\subset\text{Aut}_0(M, V)$ such that \eqref{calabi002} holds for a $K$-invariant metric, 
we call the  $\cK_X$-energy  is reductive-proper, restricted on $\cH_K$, with respect to $d_{1, G}$ if the following conditions hold,
\begin{enumerate}
\item $\cK_X$ is bounded below on $\cH_{K}$.
\item For a sequence $\phi_i\in \cH_{K}^0$, then $d_{1, G}(0, \phi_i)\rightarrow \infty$ implies that $\cK_X(\phi_i)\rightarrow \infty$. 
\end{enumerate} 
\end{defn}
Note that in the definition above, we only assume that $\cK_X$ is bounded below on $\cH_K$, for $K$-invariant metrics. 
We shall need the following well-known fact \cite{Fut, M2, FM},
\begin{lemma}\label{ginvariant01}
The $\cK$-energy is invariant under the action of $\text{Aut}_0(M)$ if and only if the Futaki invariant is zero. If the $\cK_X$-energy is bounded below, then it is invariant under the action of $G=\text{Aut}_0(M, V)$. In general, the $\cK_X$-energy, when restricted on $\cH_K$, is invariant under the action of $G$ if \eqref{calabi002} is assumed. 
\end{lemma}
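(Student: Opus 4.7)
The plan is to reduce each claim to a first-variation computation of $\cK$ (respectively $\cK_X$) along the orbit $\phi_t = \sigma_t[\phi]$ of a one-parameter subgroup $\sigma_t = \exp(tY)$ of the relevant automorphism group, using the standard identity that along such an orbit the velocity $\dot\phi_t$ coincides (up to the chosen normalization) with the real holomorphy potential $\theta_Y(\phi_t)$ of $Y$ with respect to $\omega_{\phi_t}$, together with the metric-independence of the classical and modified Futaki invariants via a standard integration by parts.

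For the first equivalence, substituting $\dot\phi_t = \theta_Y(\phi_t)$ into $\delta \cK = -\int_M \delta\phi\,(R_\phi - \underline R)\,\omega_\phi^n/n!$ yields
\[
\frac{d}{dt}\cK(\phi_t) = -\int_M \theta_Y(\phi_t)\bigl(R_{\phi_t}-\underline R\bigr)\frac{\omega_{\phi_t}^n}{n!} = -F(Y),
\]
the classical Futaki invariant. Since $\text{Aut}_0(M)$ is connected and generated by its real one-parameter subgroups, $\cK$ is $\text{Aut}_0(M)$-invariant if and only if $F(Y) = 0$ for every real holomorphic $Y$, which by $\C$-linear extension is the same as $F \equiv 0$ on $\mathfrak{h}$.

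For the second claim, the same computation applied to $\cK_X$ via \eqref{kenergy1} gives $\frac{d}{dt}\cK_X(\phi_t) = -F_V(Y)$, where
\[
F_V(Y) = \int_M \theta_Y(\phi_t)\bigl(R_{\phi_t}-\underline R-\theta_X(\phi_t)\bigr)\frac{\omega_{\phi_t}^n}{n!}
\]
is the metric-independent modified Futaki invariant. For $Y$ in the Lie algebra $\mathfrak{h}^0$ of $G$, the flow $\sigma_t$ commutes with that of $JV$, so $\phi_t \in \cH_X$ for all $t \in \R$, and $\cK_X(\phi_t)$ is affine in $t$ with slope $-F_V(Y)$. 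An affine function on $\R$ which is bounded below must be constant, so the hypothesis forces $F_V(Y) = 0$ and hence $\cK_X(\phi_t) \equiv \cK_X(\phi)$; by connectedness of $G$ we conclude that $\cK_X$ is $G$-invariant.

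For the third claim, \eqref{calabi002} identifies $G$ as the complexification $K^{\C}$ with $\mathfrak{k} = \mathfrak{a}\oplus\mathfrak{ham}$, so every $\sigma \in G$ arises from one-parameter subgroups generated by elements of $\mathfrak{k}$ or of $J\mathfrak{k}$. The $\mathfrak{k}$-directions lie in $K$ and act by honest isometries of $(M,\omega_\phi)$ for every $\phi \in \cH_K$, leaving $\cK_X$ manifestly unchanged. For $Y = JZ$ with $Z \in \mathfrak{k}$, the orbit of a $K$-invariant $\phi$ is a smooth geodesic in $\cH_K$, and the variational computation above again gives $\frac{d}{dt}\cK_X(\phi_t) = -F_V(JZ)$; the Futaki--Mabuchi definition of $V$ combined with the $\C$-linear extension of $F_V$ to $\mathfrak{h}^0 = \mathfrak{k}^{\C}$ shows $F_V|_{\mathfrak{h}^0} = 0$, giving $G$-invariance on $\cH_K$. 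The main technical subtlety I expect is verifying that $\dot\phi_t = \theta_Y(\phi_t)$ produces precisely the metric-independent form of $F_V$ in the integrand, which is a routine $\p\bar\p$-integration by parts once the holomorphy potentials are correctly normalized.
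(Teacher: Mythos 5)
Your proposal is correct and follows essentially the same route as the paper: differentiate $\cK$ (resp.\ $\cK_X$) along the orbit of a one-parameter subgroup, identify the $t$-independent derivative with the (modified) Futaki invariant / Futaki--Mabuchi pairing, use that an affine function bounded below is constant for the second claim, and invoke the decomposition \eqref{calabi002} together with the defining property of the extremal vector field for the third. The only cosmetic difference is that you dispose of the $\mathfrak{k}$-directions by noting they act as isometries of $K$-invariant metrics, whereas the paper reduces directly to $Y_\R\in J\mathfrak{ham}$; both are fine.
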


\begin{proof}
Suppose $\sigma\in \text{Aut}_0(M)$. Let $Y$ be a holomorphic vector field such that $\sigma_t$ is one-parameter subgroup generated by the flow of $Y_\R$, with $\sigma_0=id, \sigma_1=\sigma$. We write with $\sigma_t^*\omega=\omega+\sqrt{-1}\p\bar \p \phi(t)$.
By definition of $\cK$-energy, we have
\[
\frac{d}{dt}\cK(\sigma_t^*\omega)=-\int_M \sigma_{t}^*\left(\frac{d\phi}{dt}|_{t=0}(R_\omega-\underline R)\frac{\omega^n}{n!}\right)=-\int_M \frac{d\phi}{dt}|_{t=0}(R_\omega-\underline R)\frac{\omega^n}{n!}
\]
The righthand side is a constant (independent of $t$) and it is indeed just the real part of the Futaki invariant \cite{Fut} (hence a constant independent of $\omega$). 

Now we consider the $\cK_X$-energy. We consider the group action by $\text{Aut}_0(M, V)$, which preserve $V$. For $\sigma\in \text{Aux}_0(M, V)$, let $\sigma_t$ be one parameter subgroup generated by the flow of $Y_\R$.  We know that $\sigma_t^*\omega$ in invariant with respect to $JV$. Hence $\cK_X(\sigma_t^*\omega)$ is well defined and real-valued. We compute, by definition of modified Mabuchi's energy, 
\begin{equation}
\frac{d}{dt}\cK_X(\sigma_t^*\omega)=-\int_M \sigma_t^{*}\left(\frac{d \phi}{dt}|_{t=0}(R_{\omega}-\underline R-\theta_X)\frac{\omega^n}{n!}\right)
\end{equation}
We compute \[
\frac{d}{dt}\sigma_t^* \omega|_{t=0}= L_{Y_\R}\omega=\sqrt{-1}\p\bar \p\text{Re}(\theta_Y(\omega)),
\]
where $\theta_Y(\omega)$ is the complex potential of $Y$ with respect to $\omega$. 
Hence we have
\begin{equation}\label{reduce01}
\frac{d}{dt}\cK_X(\sigma_t^*\omega)=-\int_M \text{Re}(\theta_Y)(R_{\omega}-\underline R)\frac{\omega^n}{n!}+\int_M \text{Re}(\theta_Y) \theta_X\frac{\omega^n}{n!}
\end{equation}
The righthand side is independent of $t$. Hence if $\cK_X$ is bounded from below over $\cH_X$, the righthand side has to be zero. This implies that $\cK_X$ is invariant under the action of $\text{Aut}_0(M, V)$. In most general case, suppose we do not know $\cK_X$ is bounded below a priori ($\sigma_t^*\omega$ might not be $K$-invariant in general). We consider $K$-invariant metrics. By \eqref{calabi002}, we only need to consider $Y_\R\in J\mathfrak{ham}$. We still have \eqref{reduce01}; note that the righthand side is independent of the choice of $\omega$ (this conclusion does not require $\omega$ to be $K$-invariant) for any $Y\in \mathfrak{ham}\oplus J\mathfrak{ham}$. The first term is the real part of Futaki invariant, the second term is Futaki-Mabuchi bilinear form. Both are independent of the choice of the metric $\omega$. By the definition of extremal vector field, the righthand side of \eqref{reduce01} is always zero \cite{FM}. 

\end{proof}

It might be more convenient to consider a maximal subgroup $K_0\subset \text{Aut}_0(M, V)$ such that its Lie algebra consists of holomorphic vector fields with nonempty zeros. Let $G_0$ be the complexification of $K_0$. Note that $G_0$ is simply the reduced part of $\text{Aut}_0(M, V)$.  The Lie algebras satisfy $\mathfrak{g}^0=\mathfrak{k}_0\oplus J\mathfrak{k}_0$ (when an extremal metric exists, with extremal vector field $V$, the Lie algebra corresponds to $\mathfrak{ham}\oplus J\mathfrak{ham}$). Using the pair $(K_0, G_0)$, we can define the reduced properness:
\begin{defn}[reduced properness]\label{rp2}We say $\cK_X$ is reduced-proper if the following conditions hold
\begin{enumerate}
\item $\cK_X$ is bounded below on $\cH_{K_0}$.
\item $\cK_X$, when restricted on $\cH_{K_0}$, is proper with respect to $d_{1, G_0}$. That is, for a sequence of $\phi_i\in \cH^0_{K_0}$, then $d_{1, G_0}(0, \phi_i)\rightarrow \infty$ implies that $\cK_{X}(\phi_i)\rightarrow \infty.$
\end{enumerate}
\end{defn}

The following is well-known \cite{Fut, M2, FM},
\begin{lemma}\label{fm02}The $\cK_X$-energy is $G_0$ invariant. 
\end{lemma}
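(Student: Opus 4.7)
The plan is to follow exactly the same calculation carried out in the proof of Lemma \ref{ginvariant01}, but now exploit the fact that for $Y$ in the reduced Lie algebra $\mathfrak{g}_0 = \mathfrak{k}_0 \oplus J\mathfrak{k}_0$, the right-hand side of \eqref{reduce01} is manifestly zero \emph{without} needing any ambient $K$-invariance assumption. Since $G_0$ is connected (as the complexification of the compact group $K_0$), it suffices to verify that $\cK_X$ is constant along every one-parameter subgroup $\sigma_t = \exp(tY_\R)$ generated by an element $Y \in \mathfrak{g}_0$.

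First, I would check that the deformation $\sigma_t^*\omega$ remains in $\cH_X$ when $\omega \in \cH_X$. Because $\mathfrak{g}_0 \subset \mathfrak{h}^0$ and $\mathfrak{h}^0$ centralizes $JV$, the flow $\sigma_t$ commutes with the $JV$-flow; hence $L_{JV}(\sigma_t^*\omega) = \sigma_t^*(L_{JV}\omega) = 0$, so $\cK_X(\sigma_t^*\omega)$ is well-defined. Writing $\sigma_t^*\omega = \omega + \sqrt{-1}\partial\bar\partial \phi(t)$, we have $\dot\phi(0) = \text{Re}(\theta_Y(\omega))$, where $\theta_Y(\omega)$ is the holomorphic potential of $Y$ relative to $\omega$ (well-defined because $Y$ is reduced).

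The computation from Lemma \ref{ginvariant01} then yields exactly
\[
\frac{d}{dt}\cK_X(\sigma_t^*\omega)\bigg|_{t=0} = -\int_M \text{Re}(\theta_Y)(R_\omega - \underline{R})\frac{\omega^n}{n!} + \int_M \text{Re}(\theta_Y)\,\theta_X\,\frac{\omega^n}{n!}.
\]
The first term is (the real part of) the classical Futaki invariant $F(Y)$, and the second is the Futaki-Mabuchi bilinear pairing $\langle Y, X\rangle_{FM}$. Both are invariants of the K\"ahler class, and both are defined on any reduced holomorphic vector field $Y$ irrespective of whether $\omega$ has additional symmetries. By the defining property of the extremal vector field $X$ due to Futaki-Mabuchi \cite{FM}, the combination $-F(Y) + \langle Y, X\rangle_{FM}$ vanishes for every reduced $Y$. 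Consequently the derivative is zero at every $t$, and integrating along the one-parameter subgroup gives $\cK_X(\sigma^*\omega) = \cK_X(\omega)$ for all $\sigma = \exp(Y_\R)$ with $Y \in \mathfrak{g}_0$.

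Since the exponential image generates $G_0$ and both sides are continuous in the group element, $\cK_X$ is $G_0$-invariant on $\cH_X$, which is the claim. There is no real obstacle here beyond checking that the reduced hypothesis on $\mathfrak{g}_0$ guarantees that both the Futaki invariant and the Futaki-Mabuchi pairing are intrinsically defined on $Y$ (this is precisely the role of the ``nonempty zeros'' condition used to define $K_0$), so that the vanishing conclusion of \cite{FM} applies.
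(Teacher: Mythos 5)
Your proof is correct and takes essentially the same route as the paper: the paper's own proof is a one-line appeal to the definition of the extremal vector field via the Futaki--Mabuchi bilinear form, and your argument simply spells out that computation (reduction to one-parameter subgroups, identification of the derivative with $-F(Y)+\langle Y,X\rangle_{FM}$, and its vanishing for reduced $Y$). The extra details you supply are consistent with the computation already carried out in the proof of Lemma \ref{ginvariant01}.
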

\begin{proof}This is simply the definition of the extremal vector field, when a maximal compact group $K_0$ specified,  via the Futaki-Mabuchi bilinear form \cite{FM}. 
\end{proof}

\section{Extremal metrics and properness}

We prove Theorem \ref{main1} and Theorem \ref{main2} in this section, which we recall as follows,
\begin{thm}Let $(M, [\omega, J])$ be a compact K\"ahler manifold with extremal vector field $V$. Then the following statements are equivalent
\begin{enumerate}
\item $(M, [\omega, J])$ admits an extremal metric with extremal vector field $V$.

\item The $\cK_X$-energy is proper with respect to $d_{1, G}$. 

\item The $\cK_X$-energy, restricted on $\cH_K$, is reductive-proper with respect to $d_{1, G}$.

\item The $\cK_X$-energy, restricted on $\cH_{K_0}$, is reduced-proper with respect to $d_{1, G_0}$

\end{enumerate}
\end{thm}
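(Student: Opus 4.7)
The plan is to establish the cycle of implications $(1)\Rightarrow (2)\Rightarrow (3)\Rightarrow (4)\Rightarrow (1)$, with the main analytic content concentrated in the last implication, which amounts to adapting the Chen-Cheng existence program \cite{CC1, CC2, CC3} to the extremal setting. For $(1)\Rightarrow (2)$, I would use the extended $\cK_X$-functional on $\cE^1_X$, its convexity along finite energy geodesics from the corollary above, together with the fact that an extremal metric minimizes $\cK_X$ on $\cE^1_X$. Following the Berman-Darvas-Lu arguments \cite{BDL, BDL2}, together with the $d_1$-Lipschitz property of $\mathbb{J}^X$ from Proposition \ref{z1} and the $d_{1,G}$-isometric action on $\cE^1_{0,X}$, one upgrades minimality along finite energy geodesic rays to a strict linear growth of $\cK_X$ after modding out by $G$, which is the $d_{1,G}$-properness.

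For the equivalences $(2)\Leftrightarrow (3)\Leftrightarrow (4)$, the key observation (highlighted in the introduction) is that one can reduce to invariant metrics. Given $\phi \in \cH_X^0$, average $\phi$ over the maximal compact subgroup $K\subset G$ to produce a $K$-invariant potential $\phi_K$; convexity of entropy combined with the $K$-invariance of $\mathbb{J}_{-Ric}$ and $\mathbb{J}^X$ yields $\cK_X(\phi_K)\le \cK_X(\phi)$, while $d_{1,G}(0, \phi_K)\le d_{1,G}(0,\phi)$ by the triangle inequality on the quotient, reducing $(2)$ to $(3)$. The equivalence of $(3)$ and $(4)$ then uses Calabi's decomposition \eqref{calabi002}: the parallel holomorphic part $\mathfrak{a}$ generates an abelian subgroup that acts trivially on $\cK_X$ by Lemma \ref{ginvariant01} and commutes with both $K_0$-invariance and the $d_1$-geometry, so restricting symmetry from $K$ to $K_0$ and enlarging the quotient from $G_0$ to $G$ compensate each other.

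The essential direction $(4)\Rightarrow (1)$ follows the Chen-Cheng program. Set up a continuity path inside the $K_0$-invariant slice $\cH_{K_0}$, for instance Chen's continuity path
\[
t\bigl(R_{\phi_t} - \underline R - \theta_X(\phi_t)\bigr) = (1-t)\bigl(\operatorname{tr}_{\phi_t}\omega - n\bigr).
\]
Openness at each $t \in (0,1)$ uses the linearization together with the Lichnerowicz-type operator in the $K_0$-invariant slice and produces solutions via the implicit function theorem. For closedness one applies the Chen-Cheng a priori estimates: the fourth-order equation is treated as two coupled second-order equations for $\phi$ and the twisted Ricci potential $F = \log(\omega_{\phi}^n/\omega^n)$, where the additional extremal term $\theta_X(\phi)$ is uniformly bounded by Proposition \ref{z1} and therefore only contributes harmless zero-order terms. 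Reduced-properness combined with the Jensen-type entropy bound along the path gives a uniform $d_{1,G_0}$-bound; after translating by suitable elements of $G_0$ as in \cite{CC3}[\S3], this becomes a genuine $d_1$-bound and hence a $C^0$-bound via pluripotential theory, feeding into the Chen-Cheng machinery.

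The main obstacle I expect is ensuring that the $G_0$-translation step producing the $C^0$ bound from $d_{1,G_0}$-properness works when the existence of an extremal metric (hence the reductivity of $\mathfrak{h}^0$) is not yet established. The argument must therefore use only the intrinsic Futaki-Mabuchi structure attached to $V$ together with reduced-properness, following the framework of Darvas-Rubinstein \cite{DR} and \cite{CC3}[Proposition 3.6], to extract the translating biholomorphism in $G_0$. A second technical point is the verification that the auxiliary equation of Chen-Cheng preserves $K_0$-invariance along the path, which is automatic once the equation is invariant under the $K_0$-action and the initial datum is $K_0$-invariant, but needs care since the extremal equation breaks the natural symmetry present in the csck case.
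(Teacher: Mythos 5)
Your overall architecture for the two substantive directions matches the paper: properness (in any of the three senses) implies existence via Chen's continuity path $t(R_\phi-\underline R-\theta_X(\phi))-(1-t)(\mathrm{tr}_\phi\omega-n)=0$, with openness by a Hashimoto-type perturbation argument in the invariant slice and closedness by the Chen--Cheng coupled second-order estimates, the term $\theta_X(\phi)$ being harmless by Proposition \ref{z1}; and existence implies $d_{1,G}$-properness via convexity of $\cK_X$ along finite energy geodesics, the regularity of $\cE^1_X$-minimizers, and transitivity of the $G$-action on extremal metrics. Your flagged subtlety about the $G$-translation step destroying $K$-invariance (leaving only $T$-invariance) is also exactly the point the paper addresses via the $G$-invariance of $\cK_X$ on $\cH_K$ (Lemma \ref{ginvariant01}).

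The genuine gap is in your treatment of $(2)\Leftrightarrow(3)\Leftrightarrow(4)$. First, $(2)\Rightarrow(3)$ needs no averaging at all: $\cH_K\subset\cH_X$ and $\cH_K^0\subset\cH_X^0$, so reductive properness is a literal restriction of properness. The averaging argument you describe would be needed for the \emph{converse} reduction (deducing properness on all of $\cH_X$ from properness on the invariant slice), and there it fails: the claim $\cK_X(\phi_K)\le\cK_X(\phi)$ for the linear average $\phi_K=\int_K\sigma[\phi]\,d\mu(\sigma)$ does not follow from any known convexity. The entropy $H(\phi)=\int_M\log(\omega_\phi^n/\omega^n)\,\omega_\phi^n$ is convex in the \emph{measure} $\omega_\phi^n$, but $\phi\mapsto\omega_\phi^n$ is a degree-$n$ Monge--Amp\`ere nonlinearity, so the composition is not convex in $\phi$ under linear averaging; $\cK_X$ is only known to be convex along $C^{1,1}$ (or finite energy) geodesics, and averaging over $K$ in the linear structure is not geodesic averaging. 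This is precisely the pitfall behind the disproof of Conjecture \ref{c200} by Zhou--Zhu and Darvas--Rubinstein, which the paper cites as the motivation for introducing reductive/reduced properness in the first place. Second, $(3)\Rightarrow(4)$ is not a restriction in either variable: since $K_0\subset K$ one has $\cH_K\subset\cH_{K_0}$ (the target space of metrics is \emph{larger}) and $d_{1,G_0}\ge d_{1,G}$ (so divergence of $d_{1,G_0}$ does not force divergence of $d_{1,G}$), so both changes go the wrong way and your appeal to the decomposition \eqref{calabi002} --- which is moreover only available once an extremal metric is known to exist --- does not close this. The paper sidesteps the whole issue: it never deduces one properness notion from another directly, but instead proves $(2)\Rightarrow(1)$, $(3)\Rightarrow(1)$ and $(4)\Rightarrow(1)$ separately by running the entire continuity-path argument inside $\cH_X$, $\cH_K$ and $\cH_{K_0}$ respectively (Lemmas \ref{kenergy1} and \ref{kenergy2} and the discussion following Theorem \ref{con100}), and then recovers all three properness statements from $(1)$ via the quantitative bound $\cK_X(\phi)\ge Cd_{1,G}(0,\phi)-D$ of Section \ref{p1000}. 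You should restructure your middle implications accordingly, or supply a genuinely new argument for $(3)\Rightarrow(4)$ that does not pass through linear averaging.
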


\subsection{Chen-Cheng's a priori estimates}
We summarize Chen-Cheng's a prior estimates regarding csck type equations. Consider the following equation for $\phi\in \cH$, 
\begin{equation}
\begin{split}&\frac{\det(g_{i\bar j}+\phi_{i\bar j})}{\det(g_{i\bar j})}=e^F\\
&\Delta_\phi F=f+\text{tr}_\phi \eta,
\end{split}
\end{equation}
where $f$ is a smooth function and $\eta$ is a smooth real $(1, 1)$ form. 
\begin{thm}[Chen-Cheng, Theorem 3.1 \cite{CC2}]\label{keyestimate1}There exists a uniformly bounded positive constant $C_0\geq 1$, depending only on the background geometry $(M, \omega)$, the upper bound of entropy $\int_M \log(\frac{\omega_\phi^n}{\omega^n}) \omega^n_\phi$, and $\max |f|$, $\max |\eta|_{\omega}$ such that
\begin{equation}
\|\nabla \phi\|_{C_0}\leq C_0,  C_0^{-1} \omega\leq \omega_\phi\leq C_0\omega. 
\end{equation}
\end{thm}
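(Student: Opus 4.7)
The plan is to exploit the coupled structure of the system, treating it as two second-order equations whose synthesis yields estimates unobtainable from either alone. The first equation controls $\det \omega_\phi$ in terms of $F$; the second controls $\Delta_\phi F$ in terms of $f$ and $\text{tr}_\phi \eta$. After normalizing $\sup_M \phi = 0$, my first step would be a $C^0$ bound on $\phi$ extracted purely from the entropy assumption via Kolodziej-type pluripotential estimates: the entropy bound yields $L^p$ integrability of $e^F = \omega_\phi^n/\omega^n$ for some $p > 1$ (through a Young-type inequality $e^F \leq 1 + Fe^F$ on the set $\{F \geq 1\}$), and this in turn forces $\|\phi\|_{L^\infty} \leq C$.

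Second, and most delicate, I would bound $F$ from above. The equation $\Delta_\phi F = f + \text{tr}_\phi \eta$ is not maximum-principle-friendly in its native form, because the $\text{tr}_\phi \eta$ term is not controlled a priori. My plan is to introduce an auxiliary function such as $w = F - A\phi + \lambda\, \text{tr}_\phi \omega$ (or a variant involving $|\nabla \phi|^2$) with parameters $A$ and $\lambda$ to be chosen, and examine it at an interior maximum. At such a point $\Delta_\phi w \leq 0$; combining this with both equations, with the Aubin-Yau-type identity for $\Delta_\phi \text{tr}_\phi \omega$, and with the pointwise inequality $\text{tr}_\phi \eta \leq |\eta|_\omega \cdot \text{tr}_\phi \omega$, one should force $F \leq C$ once the parameters are tuned so that the dominant $A \cdot \text{tr}_\phi \omega$ contribution absorbs the bad terms.

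With $\sup F$ in hand, I would then bound $\text{tr}_\phi \omega$ (and hence, via $\det \omega_\phi = e^F \det \omega$ and AM-GM, the quantity $n + \Delta \phi$) by applying maximum principle to $u = e^{-B\phi}\,\text{tr}_\phi \omega$. The standard computation produces a term $-\Delta_\phi F$ at the maximum point, which is now directly controlled via the second equation by $|f| + \text{tr}_\phi \eta$; the latter is again absorbed by taking $B$ large relative to $|\eta|_\omega$. The gradient bound $\|\nabla \phi\|_{C^0} \leq C$ then follows from a Blocki-type maximum principle applied to $|\nabla \phi|^2$ against a suitable weight, using the metric equivalence just established.

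The main obstacle is clearly the second step: designing an auxiliary function for $F$ that simultaneously absorbs the uncontrolled $\text{tr}_\phi \eta$ term and the curvature-type error terms produced by differentiating $\text{tr}_\phi \omega$ twice with respect to $\omega_\phi$. Processing the two equations in series (bounding $F$ first from the Monge-Ampère side, then feeding into the second equation) cannot work, since the Monge-Ampère alone with entropy control does not yield an $L^\infty$ bound on $F$; the two equations must be coupled inside a single auxiliary function. Identifying this function, and verifying that the interaction terms at the maximum point have the correct sign after parameter choices, is the essence of the Chen-Cheng breakthrough and where the entire technical difficulty concentrates.
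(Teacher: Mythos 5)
First, a point of reference: the paper does not prove this statement. It is quoted verbatim as Theorem 3.1 of Chen--Cheng \cite{CC2} and used as a black box; the only internal commentary is the remark that the argument treats the scalar-curvature equation as two coupled second-order equations and mixes maximum principles with integral methods. So your proposal has to be measured against Chen--Cheng's actual proof rather than anything in this paper. Your sketch captures the correct philosophy (couple the two equations; bound $F$ before attempting $\mathrm{tr}_\phi\omega$), but it has two genuine gaps. The first is the claim that the entropy bound yields $L^p$-integrability of $e^F$ for some $p>1$ via $e^F\le 1+Fe^F$: a bounded entropy only places $e^F$ in the Orlicz class $L\log L$, which does not embed into $L^p$ for any $p>1$, so Kolodziej's estimate cannot be invoked at that stage and your first step already does not close. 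The second is the step you yourself flag as the crux and leave open: the test function for $\sup F$. The candidate $w=F-A\phi+\lambda\,\mathrm{tr}_\phi\omega$ cannot work, because $\Delta_\phi\,\mathrm{tr}_\phi\omega$ generates third-order terms and curvature terms contracted against $\omega_\phi$ that can only be absorbed once the metric equivalence $C_0^{-1}\omega\le\omega_\phi\le C_0\omega$ is already known --- exactly the circularity you are trying to break.

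The idea you are missing is Chen--Cheng's auxiliary complex Monge--Amp\`ere equation. One solves $(\omega+\sqrt{-1}\p\bar\p\psi)^n=c\,e^F\sqrt{F^2+1}\,\omega^n$ (with $c$ a normalizing constant controlled by the entropy, $\sup\psi=0$) and applies the maximum principle to $F+\epsilon\psi-\lambda\phi$. At an interior maximum, $\Delta_\phi\psi=\mathrm{tr}_\phi\omega_\psi-\mathrm{tr}_\phi\omega$ contributes, via the arithmetic--geometric mean inequality, a term $\epsilon n\bigl(\omega_\psi^n/\omega_\phi^n\bigr)^{1/n}=\epsilon n\bigl(c\sqrt{F^2+1}\bigr)^{1/n}$ that grows in $F$, while choosing $\lambda$ large relative to $\epsilon$ and $\max|\eta|_\omega$ absorbs $\mathrm{tr}_\phi\eta$; this forces the bound. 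The resulting inequality $e^F\le Ce^{-\epsilon\psi+\lambda\phi}$ is then upgraded to genuine $L^p$-integrability of $e^F$, $p>1$, by Tian's $\alpha$-invariant, after which Kolodziej controls $\|\phi\|_{L^\infty}$ and $\|\psi\|_{L^\infty}$ and hence $\|F\|_{L^\infty}$. Your third and fourth steps are closer to the truth, except that the weights must involve $F$ as well as $\phi$ (e.g.\ $e^{\frac12(F+\lambda\phi)}\sqrt{|\nabla\phi|^2+K}$ and $e^{-\alpha(F+\lambda\phi)}(n+\Delta\phi)$): differentiating the weight produces both the term $-\alpha\Delta_\phi F$, which the second equation converts into controlled quantities, and a good term $\alpha|\nabla_\phi F|^2$ needed to absorb the third-order errors; weighting by $e^{-B\phi}$ alone does not produce either. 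Without the auxiliary equation the proposal does not constitute a proof.
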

Consider also the following more general equation for $\phi$,
\begin{equation}\label{continuity8}
\begin{split}&\frac{\det(g_{i\bar j}+\phi_{i\bar j})}{\det(g_{i\bar j})}=e^F\\
&\Delta_\phi F=R+\text{tr}_\phi (Ric-\beta),
\end{split}
\end{equation}
where $\beta=\beta_0+\sqrt{-1}\p\bar \p f\geq 0$. Assume $\beta_0$ is a bounded $(1, 1)$-form and $f$ is normalized such that $\sup f=0, e^{-f}\in L^{p_0}(M)$ for some $p_0>1$ specified below.
\begin{thm}[Chen-Cheng, Theorem 2.3 \cite{CC3}]\label{keyestimate2}Assume $\beta\geq 0$. Let $\phi$ be a smooth solution of \eqref{continuity8}. Suppose $p_0\geq k_n$ for some positive constant $k_n$ depending only on $n:=\text{dim}_\C M$  ($k_n$ sufficiently large, say $10n^2+100$) . Then for any $1\leq p\leq p_0-1$,
\begin{equation}
\|F+f\|_{W^{1, 2p}}+\|n+\Delta\phi\|_{L^p}\leq C,
\end{equation}
where $C$ depend on an upper bound of entropy $\int_M \log(\frac{\omega_\phi^n}{\omega^n}) \omega^n_\phi$, $p_0$, and the bound of $\int_M e^{-p_0f} \omega^n$, $\max |R|, \max|\beta_0|_{\omega}$ and the background metric $(M, \omega)$. The dependence is uniform in $p_0$. 
\end{thm}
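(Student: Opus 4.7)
The plan is to follow Chen--Cheng's strategy of treating the problem as a coupled system of two second-order equations rather than as a single fourth-order nonlinear equation. A decisive preliminary step is the substitution $u := F + f$: since $\text{tr}_\phi(\sqrt{-1}\partial\bar\partial f) = \Delta_\phi f$, the second equation becomes
\[
\Delta_\phi u = R + \text{tr}_\phi(Ric(\omega) - \beta_0),
\]
whose right-hand side is pointwise controlled by $C(1+\text{tr}_\phi\omega)$, while the positivity of $\beta$ has been absorbed into the change of variable. The first equation becomes $\omega_\phi^n = e^{u-f}\omega^n$, and combining the entropy hypothesis with $e^{-p_0 f}\in L^1$ via Jensen and H\"older gives an initial integral bound on $u$ (for instance controlling $\int u\, \omega_\phi^n$).

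For the $W^{1,2p}$ bound on $u$, I would test the reformulated equation against $|u|^{2p-2}u$ (with appropriate truncation and sign-splitting) and integrate against $\omega_\phi^n$. Integration by parts on the left yields $(2p-1)\int|u|^{2p-2}|\nabla u|_\phi^2\,\omega_\phi^n$. The dangerous contribution is the right-hand side $\int|u|^{2p-2}u\cdot\text{tr}_\phi(Ric(\omega)-\beta_0)\,\omega_\phi^n$, which I would handle by a second integration by parts: writing $\text{tr}_\phi\alpha\cdot\omega_\phi^n = n\,\alpha\wedge\omega_\phi^{n-1}$ and then moving $\partial\bar\partial$ onto the test function. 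This produces gradient-of-$u$ terms that can be absorbed back into the LHS via Cauchy--Schwarz, together with lower-order error terms whose control requires H\"older inequalities against $\int e^{-qf}\omega^n$ for exponents $q<p_0$.

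For the $L^p$ bound on $n+\Delta\phi$, I would invoke the Aubin--Yau--Chern--Lu inequality
\[
\Delta_\phi\log(n+\Delta\phi)\geq -C_B\,\text{tr}_\phi\omega + \frac{\Delta F}{n+\Delta\phi},
\]
with $C_B$ depending only on the bisectional curvature of $\omega$. Multiplying by $(n+\Delta\phi)^{p}$ and integrating against $\omega_\phi^n$ reduces the problem to controlling integrals of $\Delta F\cdot(n+\Delta\phi)^{p-1}$, which after integration by parts become integrals of $|\nabla F|_\phi\cdot|\nabla(n+\Delta\phi)^{p-1}|_\phi$. The $W^{1,2p}$ bound on $u=F+f$ from the previous step, combined with interpolation, supplies the needed $L^{2p}$ control on $\nabla F$. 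The change of measure $\omega_\phi^n=e^{u-f}\omega^n$ consumes one power of $e^{-f}$ per H\"older step, which is precisely why $p$ must lie in $[1,p_0-1]$.

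The principal obstacle is the degeneracy of $f$: being merely $e^{-p_0f}$-integrable (and not pointwise bounded below), every step of the argument must carefully track the $e^{-f}$ weight to avoid blow-up on the pluripolar singular locus of $f$. A naive maximum-principle argument would fail there. The threshold $k_n\approx 10n^2+100$ reflects that each Sobolev embedding, Calder\'on--Zygmund passage, and Moser-type iteration consumes some of the available integrability of $e^{-f}$, so that the whole scheme closes only once $p_0$ exceeds this dimensional bound. A secondary but important point is the uniform dependence of the final constant $C$ on $p_0$ as $p_0\to\infty$, which requires keeping the polynomial growth of intermediate constants in $n$ and $p$ under careful control throughout the iteration.
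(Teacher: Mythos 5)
This theorem is not proved in the paper at all: it is quoted verbatim from Chen--Cheng \cite{CC3} (their Theorem 2.3) and used as a black box, so there is no in-paper argument to compare your proposal against. Judged on its own terms, your sketch correctly identifies the organizing ideas of Chen--Cheng's proof --- viewing the problem as a coupled second-order system, the substitution $u=F+f$ which converts $\Delta_\phi F = R+\mathrm{tr}_\phi(Ric-\beta)$ into $\Delta_\phi u = R+\mathrm{tr}_\phi(Ric-\beta_0)$, integral estimates rather than pointwise maximum principles, and a Chern--Lu-type inequality for $n+\Delta\phi$ --- but as a proof it has genuine gaps.

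The most serious omission is the $L^\infty$ estimate on $\phi$ and on $F+f$ (Theorems 2.1--2.2 of \cite{CC3}, proved via the Alexandrov--Bakelman--Pucci maximum principle applied to $F+f+\epsilon\phi$ compared against an auxiliary complex Monge--Amp\`ere solution), which is logically prior to the $W^{1,2p}$ and $L^p$ bounds and is used throughout their derivation; your scheme starts the $|u|^{2p-2}u$ testing with only an integral bound on $u$ in hand, and the iteration does not close from there. Second, the absorption step you describe does not work as stated: after writing $\int |u|^{2p-2}u\,\mathrm{tr}_\phi(Ric-\beta_0)\,\omega_\phi^n = n\int |u|^{2p-2}u\,(Ric-\beta_0)\wedge\omega_\phi^{n-1}$ and integrating by parts, the error terms are measured against $\omega\wedge\omega_\phi^{n-1}$ rather than $\omega_\phi^n$, so they cannot be absorbed into $(2p-1)\int|u|^{2p-2}|\nabla u|_\phi^2\,\omega_\phi^n$ by Cauchy--Schwarz alone; this mismatch of measures is exactly why Chen--Cheng run the $F$-estimate and the $n+\Delta\phi$-estimate as a single coupled iteration with exponential weights of the form $e^{\alpha(F+f+\lambda\phi)}(n+\Delta\phi)$, rather than sequentially as you propose. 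Finally, the positivity of $\beta$ is not ``absorbed into the change of variable'': the substitution only removes the $\sqrt{-1}\p\bar\p f$ part, and $\beta\geq 0$ is used with its sign in the second-order (Chern--Lu) computation and to place $f$ in the quasi-psh class where the $\alpha$-invariant gives $e^{-p_0f}\in L^1$.
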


\begin{rmk}Technically, the arguments by Chen and Cheng are subtle. The key is to consider the scalar curvature type equation as two coupled elliptic equations  on potential $\phi$ and log-volume form $F$.
Their methods are combination of very skillful applications of maximum principle  and extremely skillful integral methods on compact manifolds. The results and ideas using integral method on compact manifolds in \cite{chenhe12} are actually used in a significant way in \cite{CC1, CC2, CC3}. More strikingly, Chen and Cheng have obtained $C^0$ estimates (and all higher order estimates)  depending only on the entropy bound (and  scalar curvature bound of course). As a comparison, the dependence on volume-ratio in this case is even weaker, than the $C^0$ estimate obtained by Kolodziej \cite{K} for the complex Monge-Ampere equations. This dependence down to the entropy and scalar curvature is absolutely crucial for geometric applications such as properness conjecture. 
\end{rmk}

\subsection{Properness implies existence}In this section we prove the existence of an extremal metric given the properness of the modified $\cK$-energy modulo $\text{Aut}_0(M, V)$. we shall  see that the proof can be modified slightly for reductive-properness and reduced properness.   We  consider the modified continuity path of Chen \cite{chen15} for extremal metrics, 
\begin{equation}\label{continuity1}
t(R_\phi-\underline R-\theta_X(\phi))-(1-t) (tr_\phi \omega-n)=0
\end{equation}
where $\omega, \omega_\phi$ are both $T$-invariant. 
The functional corresponding the continuity path \eqref{continuity1} is denoted by
\begin{equation}
\tilde K_{t}=t\cK_X(\phi)+(1-t)\mathbb{J}(\phi).
\end{equation}

\subsubsection{Openness} First we prove the openness of the modified continuity path \eqref{continuity1}.
The openness when $V=0$ follows from the results of \cite{chen15, hashimoto, yuzeng}. We generalize their results to extremal case. 
Technically the following theorem is a generalization of  Hashimoto's result \cite{hashimoto}[Theorem 1.2] for $V=0$, and its proof is a modification of Hashimoto's proof. 
\begin{thm}\label{open1}Suppose we have two K\"ahler metrics $\omega$ and $\alpha$ such that $\Lambda_\omega \alpha=\text{const}. $ We assume $\omega, \alpha$ are both $T$-invariant.  Then there exists a constant $r(\omega, \alpha)$ depending only on $\omega, \alpha$ such that $r\geq r(\omega, \alpha)$, there exists $\phi\in \cH_X$ such that $\omega_\phi$ satisfies $R_\phi-\theta_X(\phi)-\Lambda_{\omega_\phi}(r\alpha)=\text{const}$.\end{thm}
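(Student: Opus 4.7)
The plan is to apply an implicit function theorem in the ``large $r$'' regime, after a rescaling that turns the problem into a regular perturbation of the hypothesis $\Lambda_\omega\alpha=\text{const}$. Throughout I work with $T$-invariant H\"older potentials, which is consistent with the $T$-invariance of $\omega,\alpha$ and the equivariance of all operators involved.

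First, I reparameterize by $s=1/r$ and set
\[
F(\phi,s):=s\bigl(R_\phi-\theta_X(\phi)\bigr)-\Lambda_{\omega_\phi}\alpha,\qquad \phi\in\cH_X.
\]
Solving the equation in the statement for a given $r$ amounts to finding $\phi\in\cH_X$ with $F(\phi,1/r)$ constant (the value of that constant is pinned down by integrating against $\omega_\phi^n$ and is topological). By hypothesis, $F(0,0)=-\Lambda_\omega\alpha$ is already constant, so $(\phi,s)=(0,0)$ is an exact base point of the rescaled problem.

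I then linearize at $(0,0)$. The factor $s=0$ kills the contribution from $R_\phi-\theta_X(\phi)$, leaving only the variation of $-\Lambda_{\omega_\phi}\alpha$, and a direct computation gives
\[
D_\phi F\big|_{(0,0)}(\psi)=\omega^{i\bar k}\omega^{l\bar j}\alpha_{i\bar j}\psi_{k\bar l}=\langle\sqrt{-1}\p\bar\p\psi,\alpha\rangle_\omega,
\]
a second order elliptic self-adjoint operator whose symbol is positive definite because $\alpha>0$. On $T$-invariant functions its kernel is the constants and so is its cokernel (by self-adjointness), so it descends to an isomorphism modulo constants. Applying the implicit function theorem to $F(\phi,s)$ minus its $\omega_\phi^n$-average then produces, for $s>0$ sufficiently small, a $T$-invariant $\phi(s)\in\cH_X$ close to zero; equivalently, for every $r\ge r(\omega,\alpha):=1/\delta$ (with $\delta$ the IFT radius) the desired $\phi$ exists. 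Elliptic bootstrap in the genuinely fourth order equation for $s>0$ then upgrades $\phi(s)$ to $C^\infty$.

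The main technical obstacle is that the order of the nonlinear operator drops from four (for $s>0$) to two (at $s=0$), so a naive H\"older scale $C^{k+4,\alpha}\to C^{k,\alpha}$ will not match the second order linearization at the base point. I would handle this either by a tame (Nash--Moser) implicit function theorem in the smooth Fr\'echet category, or more elementarily via a Banach fixed point for
\[
\psi\longmapsto -L^{-1}\bigl(s(R_\psi-\theta_X(\psi))+N(\psi)\bigr)
\]
on a small $C^{k+2,\alpha}$ ball, where $L=D_\phi F|_{(0,0)}$ and $N(\psi)=-\Lambda_{\omega_\psi}\alpha+\Lambda_\omega\alpha-L(\psi)$ is the quadratic-and-higher remainder; contraction for $s\ll 1$ follows from Schauder estimates for $L^{-1}$ together with the fact that the fourth order term enters with the small factor $s$. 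The passage from Hashimoto's csck argument to the extremal case is otherwise painless: $\theta_X(\phi)=\theta_X+V(\phi)$ is a first order perturbation of $R_\phi$, uniformly bounded by Proposition~\ref{z1}, so it affects neither the linearization at the base point nor the contraction estimates, and every space and operator involved is preserved by the $T$-action.
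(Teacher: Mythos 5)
Your strategy is genuinely different from the paper's: the paper follows Hashimoto by first building, at \emph{fixed large} $r$, an approximate solution $\omega_m$ that exactly solves the $r\alpha_m$-twisted extremal equation with $\alpha_m-\alpha=O(r^{-m})$, and then applies a quantitative inverse function theorem at $\omega_m$, where the linearized operator is the honest fourth-order operator $-\cD^*_m\cD_m+rF_m$ with inverse norm $O(r)$; the high-order approximation is exactly what compensates for that large inverse norm. Your proposal instead rescales by $s=1/r$ and tries to perturb off the degenerate base point $s=0$. You correctly identify the resulting difficulty -- the operator drops from fourth order to second order at $s=0$ -- but the resolution you offer does not work as stated, and this is a genuine gap rather than a cosmetic one. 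The map $\psi\mapsto -L^{-1}\bigl(s(R_\psi-\theta_X(\psi))+N(\psi)\bigr)$ is not a self-map of any H\"older ball: if $\psi\in C^{k+2,\alpha}$ then $R_\psi$ is not even defined classically, and if $\psi\in C^{k+4,\alpha}$ then $R_\psi\in C^{k,\alpha}$ and $L^{-1}(sR_\psi)\in C^{k+2,\alpha}$ only, since Schauder theory for the \emph{second-order} operator $L$ gains exactly two derivatives. The factor $s$ shrinks norms but cannot repair unboundedness, so each iteration loses two derivatives and the contraction claim fails. Equivalently, $D_\phi F|_{(0,0)}:C^{k+4,\alpha}_0\to C^{k,\alpha}_0$ has dense non-closed image, so the Banach-space implicit function theorem does not apply at $(0,0)$.

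To rescue your route you would need either a Nash--Moser scheme (which you mention but do not execute; verifying tame estimates here is delicate precisely because the linearization at nearby points $(\psi,s)$ with $s>0$ is fourth order and you need its inverse to gain only two derivatives \emph{uniformly} in $s$), or a genuine singular-perturbation analysis with $s$-weighted norms such as $\|\psi\|_{C^{k+2,\alpha}}+s\,\|\psi\|_{C^{k+4,\alpha}}$ and uniform-in-$s$ a priori estimates for $-s\,\cD^*\cD+F_{\omega,\alpha}$. Either is plausible but neither is routine, and neither is carried out. The parts of your write-up concerning the extremal modification itself are fine and agree with the paper: $\theta_X(\phi)=\theta_X+X(\phi)$ contributes only the first-order term $(\p\theta_X,\bar\p\psi)_\omega$ to the linearization, it is uniformly bounded by Proposition \ref{z1}, and everything can be arranged $T$-equivariantly. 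The missing ingredient is precisely the mechanism -- in the paper, the high-order approximate solution combined with the $\|P_r\|\leq Cr$ bound -- that bridges the gap between the degenerate limit and the fourth-order problem at finite $r$.
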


\begin{proof}We call $\omega$ a $\alpha$-twisted extremal metric if it satisfies the equation
\[
R(\omega)-\theta_X-\Lambda_{\omega}\alpha=\text{const}
\]
Denote $S(\omega)=R(\omega)-\theta_X-\Lambda_{\omega}\alpha$. We compute its linearization, given $\delta \omega=\sqrt{-1}\p\bar\p\psi$, 
\begin{equation}\label{l1}
\cL_S(\psi)=-\cD^*_\omega\cD_\omega \psi+(\p S(\omega), \bar\p \psi)_\omega+(\alpha, \sqrt{-1}\p\bar\p \psi)_\omega+(\p \Lambda_\omega\alpha, \bar\p\psi)_\omega,
\end{equation}
where $\cD_\omega: C^\infty(X)\rightarrow C^\infty(T^{1, 0}M\otimes \Omega^{0, 1})$ is defined by $\cD_\omega \psi=\bar\p (\nabla^{1, 0}\psi)$, and $\cD_\omega^*$ is its formal adjoint and $\cD^*\cD$ is called Lichnerowicz operator.  The linearization of scalar curvature was first computed in Calabi's paper \cite{Ca1} and was studied extensively by LeBrun-Simanca \cite{LS} when they considered deformation of extremal metrics. 
Our formula \eqref{l1} is a straightforward modification of Hashimoto's computation \cite{hashimoto}[Page 5]. Note that we have $\theta_X(\phi)=\theta_X+X(\phi)$, for $\omega_\phi=\omega+\sqrt{-1}\p\bar\p\phi$, hence  the extra term is given by $\delta \theta_X=X(\psi)=(\p \theta_X, \bar\p \psi)_\omega$ ($\theta_X$ is real for $T$-invariant metrics). 
This together with Hashimoto's computation we have \eqref{l1}. At an $\alpha$-twisted extremal metric, we note that the linearized operator reads
\begin{equation}\label{l2}
\cL_S(\psi)=-\cD^*_\omega\cD_\omega \psi+F_{\omega, \alpha},
\end{equation}
where we define the operator, following \cite{hashimoto}[Section 2],  $F_{\omega, \alpha}: C^\infty\rightarrow C^\infty$ by
\begin{equation}\label{of}
F_{\omega, \alpha}(\psi):=(\alpha, \sqrt{-1}\p\bar\p \psi)_\omega+(\p \Lambda_\omega\alpha, \bar\p\psi)_\omega. 
\end{equation}
By \cite{hashimoto}[Lemma 2.2] we know that $F_{\omega, \alpha}$ is a complex self-adjoint operator and its kernel consists of constant functions.  The strategy is,  first we construct approximate solutions to $r\alpha$ for $r>0$ sufficiently large, and then apply the inverse function theorem to get an actual solution.
To construct approximation, we observe the following,
\begin{prop}Let $\omega, \alpha$ be $T$-invariant K\"ahler metrics satisfying  $\Lambda_\omega\alpha=\text{const}$ and let $\rho(\omega, V)$ be the unique solution of the equation 
\[
\Delta_\omega \rho=R(\omega)-\underline R-\theta_X, \int_M \rho \omega^n=0. 
\]
Let  $\alpha_1=\alpha+\frac{\sqrt{-1}}{r} \p\bar \p \rho$ be K\"ahler for $r>0$ sufficiently large, depending only on $\omega, \alpha$. Then $\omega$ is $r\alpha_1$-twisted extremal metric. Note that $\rho$ and hence $\alpha_1$ are both $T$-invariant. 
\end{prop}
Thus $\omega$ is an $r\alpha_1$-twisted extremal metric  for $\alpha_1\in [\alpha]$ which differs $\alpha$ by order $r^{-1}.$ Next we want to improve this observation order by order, so that $\omega_m\in [\omega]$ is an $r\alpha_m$-twisted extremal metric for $\alpha_m\in [\alpha]$ which differs $\alpha$ by order $r^{-m}$. We proceed as follows. Suppose $\Lambda_\omega\alpha=\text{const}$. We write
\[
R(\omega)-\theta_X-r\Lambda_\omega\alpha=\text{const}+(R(\omega)-\underline R-\theta_X).
\]
Consider $\omega_1:=\omega+r^{-1}\p\bar \p \phi_1$. We write $\theta_X(\omega_1)=\theta_X+X(r^{-1}\phi_1)$. We compute,
\begin{equation}\label{step1}
R(\omega_1)-\theta_X(\omega_1)-r\Lambda_{\omega_1}\alpha=\text{const}+(R(\omega)-\underline R-
\theta_X)+(\alpha, \sqrt{-1}\p\bar\p \phi_1)_\omega+O(r^{-1}),
\end{equation}
where we have used the fact that $|X(\phi_1)|$ is uniformly bounded by Proposition \ref{z1}.
Given $\Lambda_\omega\alpha=\text{const}$, we use \cite{hashimoto}[Lemma 3.2] to solve the linear elliptic equation of $\phi_1$ such that \begin{equation}\label{e1}(R(\omega)-\underline R-
\theta_X)+(\alpha, \sqrt{-1}\p\bar\p \phi_1)_\omega=0.\end{equation} The point is that the operator $F_{\omega, \alpha}$ in \eqref{of} is a second order linear elliptic operator which has zero kernel modulo constants. 
We emphasize two properties of the solution $\phi_1$. The first is that $\phi_1$ is $T$-invariant, and the second is that we have uniform control on derivatives of  $\phi_1$ in terms of $\omega, \alpha$ by the elliptic equation \eqref{e1}. With this in mind we rewrite \eqref{step1} as
\begin{equation}
\label{step2}
R(\omega_1)-\theta_X(\omega_1)-r\Lambda_{\omega_1}\alpha=\text{const}+r^{-1} f_{1, r},
\end{equation}
where $f_{1, r}$ is a uniformly bounded smooth function. In particular we have $\omega_1=\omega+O(r^{-1})$. 
We repeat the procedure (by induction) as follows. Suppose we have constructed $\phi_1, \cdots, \phi_{m-1}$ and correspondingly for $i=1, \cdots, m-1$,
 \[\omega_i:=\omega+\sqrt{-1}\p\bar\p (r^{-1}\phi_1+\cdots+r^{-i}\phi_i)\] 
 such that for $1\leq i\leq m-1$, we have for a uniformly bounded smooth function $f_{i, r}$ (we can assume $\int_Mf_{i, r}\omega_i^n=0$),
 \[
R(\omega_i)-\theta_X(\omega_i)-r\Lambda_{\omega_i}\alpha=\text{const}+r^{-i} f_{i, r}.
\]
Next we construct $\phi_m$ and $\omega_m:=\omega_{m-1}+r^{-m}\phi_m$ as follows. We write
\[
\begin{split}
R(\omega_m)-\theta_X(\omega_m)-r\Lambda_{\omega_m}\alpha=&(R(\omega_{m-1})-\theta_X(\omega_{m-1})-r\Lambda_{\omega_{m-1}}\alpha)\\&\;+r^{1-m}(\alpha, \sqrt{-1}\p\bar\p \phi_m)_{\omega_{m-1}}+O(r^{-m})\\
=&\text{const}+r^{1-m}f_{m-1, r}+r^{1-m}(\alpha, \sqrt{-1}\p\bar\p \phi_m)_{\omega}+O(r^{-m}),
\end{split}
\]
where we have used the fact that $\omega_{m-1}=\omega+O(r^{-1})$.  We can then solve $\phi_m$ such that
\[
(\alpha, \sqrt{-1}\p\bar\p \phi_m)_{\omega}+f_{m-1, r}-\underline{f_{m-1}, r}=0,
\]
where $\underline{f_{m-1, r}}$ is the average of $f_{m-1, r}$ with respect to $\omega^n$. 
The standard elliptic theory then gives a uniform control on $\phi_m$. We summarize the discussions above as, 

\begin{lemma}Suppose $\omega, \alpha$ are $T$-invariant and $\Lambda_\omega\alpha=\text{const}$. Then for each $m\in \N$, there exist $\phi_1, \cdots, \phi_m\in C^\infty$ such that
\[
\omega_m=\omega+\sqrt{-1}\p\bar\p (r^{-1}\phi_1+\cdots+r^{-m}\phi_m)
\]
such that for a function $f_{m, r}$ with $\int_M f_{m, r}\omega_m^n=0$ (which is uniformly bound in $C^\infty$ for all sufficiently large $r$),
\[
R(\omega_m)-\theta_X(\omega_m)-r\Lambda_{\omega_m}\alpha=\text{const}+r^{-m} f_{m, r}.
\]
\end{lemma}
We can then solve the equation $\Delta_{\omega_m} h_{m, r}=f_{m, r}$ such that $\int_M h_{m, r}\omega_m^n=0$. As noted above, $\omega_m=\omega+O(r^{-1})$ is uniformly equivalent to $\omega$ for each $m$ (independent of $r$ and $m$ when $r$ is sufficiently large). The standard elliptic theory then gives uniform control on $h_{m, r}$. Denote $\alpha_m=\alpha+r^{-m}\sqrt{-1}\p\bar\p h_{m, r}$. Then it satisfies the equation,
\begin{equation}\label{approx1}
R(\omega_m)-\theta_{X}(\omega_m)-r\Lambda_{\omega_m}\alpha_m=\text{const}. 
\end{equation}
Hence $\alpha_m-\alpha=O(r^{-m})$ and we have obtained $r\alpha_m$-twisted extremal metric $\omega_m$. 
We emphasize that the assumption that $\omega, \alpha$ are both $T$-invariant is necessary, which are used implicitly that all solutions $\phi_i$ are $T$-invariant, hence $\theta_X(\omega_i)$ are all real valued.
Next we apply the inverse function theorem between Banach spaces to perturb $r\alpha_m$-twisted extremal metric $\omega_m$ to get an $r\alpha$-twisted extremal metric. The argument proceeds exactly the same as in \cite{hashimoto}[Section 3.2] with two modifications. We replace the twisted csck equation  by the twisted extremal equation
\[
R(\omega)-\theta_X(\omega)- r\Lambda_\omega\alpha=\text{const},
\]
and hence  by \eqref{l1} and in particular \eqref{l2}, the linearized operator is exactly the same as in the twisted csck case (compare \cite{hashimoto}, (5)). We need to apply inverse function theorem in a $T$-invariant way, by considering functional spaces which are $T$-invariant.  We briefly summarize the argument. Consider $T$-invariant Sobolev spaces $W^{k, 2}_{0, T}$ for $k$ sufficiently large, normalized such that the average is zero with respect to $\omega$, and $\Omega^{1, 1}_T$ be $W^{k, 2}$ real $(1, 1)$-forms on $M$ which are invariant with respect to $T$-action. Take the Banach space $B_1:=\Omega^{1, 1}_T \times W^{k+4, 2}_{0, T}$, $U:=\{(\eta, \phi)\in B_1| \omega_m+\sqrt{-1}\p\bar\p \phi>0\}$. Take $B_2:=\Omega^{1, 1}_T \times W^{k, 2}_{0, T}$.
We define the map, for constant $r>0$, 
\[
T_r(\eta, \phi):=(\alpha_m+\eta, R(\omega_{m, \phi})-\theta_X(\omega_{m, \phi})-r\Lambda_{\omega_{m, \phi}}(\alpha_m+\eta))
\]
with $\omega_{m, \phi}=\omega_m+\sqrt{-1}\p\bar\p\phi$. We identify $(\alpha_m, \omega_m)$ to be $0\in B_1$. We also write the operators $\Lambda_m=\Lambda_{\omega_m}, \cD^*_m\cD_m$ and $F_m$ for corresponding operators defined using $\omega_m, \alpha_m$. \eqref{l2} gives the linearized operator $DT_r$ at $0\in B_1$ by ($\omega_m$ is $r\alpha_m$-twisted)
\[
DT_r|_0(\tilde \eta, \tilde \phi)=\begin{pmatrix}1 & 0\\
-r\Lambda_m& -\cD^*_m\cD_m+rF_m
\end{pmatrix}\begin{pmatrix}\tilde\eta\\
\tilde\phi\end{pmatrix}
\]
Since the kernel of $\cD^*_m\cD_m+rF_m: W^{k+4, 2}_0\rightarrow W^{k, 2}_0$ is zero, it follows that $DT_r|_0: B_1\rightarrow B_2$ is an invertible operator with the inverse $P=P_r$,
\[
P_r(\tilde \eta, \tilde \phi)=\begin{pmatrix}1 & 0\\
r(-\cD^*_m\cD_m+rF_m)^{-1}\Lambda_m& (-\cD^*_m\cD_m+rF_m)^{-1}
\end{pmatrix}\begin{pmatrix}\tilde\eta\\
\tilde\phi\end{pmatrix}
\] 
We have the estimate $\|P_r\|\leq Cr$ for uniform constant $C$ and $r>1$ as in \cite{hashimoto}. We compute $T_r(0, 0)=(\alpha_m, \underline R-rc)$, where $c$ is the topological constant  (the average of $\Lambda_{\omega_m}{\alpha_m}$, depending only on $[\omega]$ and $\alpha$). We need to find $\eta, \phi$ such that $T_r(\eta, \phi)=(\alpha, \underline R-rc)$. Then the exact same application of inverse function theorem applies for $\alpha-\alpha_m=O(r^{-m})$, $m$ sufficiently large (\cite{hashimoto}[Theorem 3.4, Section 3.2]).  

\end{proof}
Similar to \cite{hashimoto}[Corollary 1.4], we have the following,
\begin{cor}\label{open2}Suppose that $\omega$ is $\alpha$-twisted extremal for two $T$-invariant K\"ahler metrics $\alpha, \omega$. Then if $\tilde \alpha$ is a $T$-invariant metric such that $\tilde \alpha-\alpha$ is sufficiently small in the $C^\infty$-norm, then there exists an $\tilde \alpha$ twisted extremal metric in $[\omega]$ which is $T$-invariant.  
\end{cor}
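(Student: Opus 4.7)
The plan is to run the inverse function theorem once more, now centered at the honest $\alpha$-twisted extremal metric $\omega$ itself rather than at an approximate solution $\omega_m$. Since $\omega$ already solves the equation exactly, there is no need for a large parameter $r$, and the argument becomes a short repetition of the last step of the proof of Theorem \ref{open1}.

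Concretely, I work in the same $T$-invariant functional framework as before: for $k$ sufficiently large, set $B_1 := \Omega^{1,1}_T \times W^{k+4, 2}_{0, T}$ and $B_2 := \Omega^{1,1}_T \times W^{k, 2}_{0, T}$, with $U \subset B_1$ the open neighborhood of $(0, 0)$ on which $\omega_\phi := \omega + \sqrt{-1}\,\p\bar\p\,\phi > 0$. I then define $S : U \to B_2$ by
\[
S(\eta, \phi) := \bigl(\alpha + \eta,\; R(\omega_\phi) - \theta_X(\phi) - \Lambda_{\omega_\phi}(\alpha + \eta) - \mathrm{avg}\bigr),
\]
where the subtracted average makes the second coordinate land in the zero-average space. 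Since $\omega$ is $\alpha$-twisted extremal and $T$-invariant, $S(0, 0) = (\alpha, 0)$, and by \eqref{l2} the differential at $(0, 0)$ is the block-lower-triangular operator
\[
DS|_{(0, 0)} = \begin{pmatrix} \mathrm{id} & 0 \\ -\Lambda_\omega & -\cD^*_\omega \cD_\omega + F_{\omega, \alpha} \end{pmatrix}.
\]
Its invertibility reduces to invertibility of the bottom-right block $-\cD^*_\omega\cD_\omega + F_{\omega, \alpha} : W^{k+4, 2}_{0, T} \to W^{k, 2}_{0, T}$. This is the same fourth-order self-adjoint elliptic operator whose invertibility was already used in Theorem \ref{open1}: by \cite{hashimoto}[Lemma 2.2] the kernel of $F_{\omega, \alpha}$ consists of constants, and restricting to $T$-invariant, zero-average functions kills both the holomorphy-potential kernel of $\cD^*\cD$ and the constant kernel of $F$, so the operator is an isomorphism by standard elliptic theory.

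The inverse function theorem then provides an open neighborhood $W \subset B_2$ of $S(0, 0)$ on which $S$ admits a smooth local inverse. For any $T$-invariant $\tilde\alpha$ with $\tilde\alpha - \alpha$ sufficiently small in the $C^\infty$-norm, the pair $(\tilde\alpha, 0)$ lies in $W$, yielding $(\eta_0, \phi_0) \in B_1$ with $S(\eta_0, \phi_0) = (\tilde\alpha, 0)$; a standard elliptic bootstrap promotes $\phi_0$ to a smooth $T$-invariant potential, and $\omega_{\phi_0} \in [\omega]$ is the desired $\tilde\alpha$-twisted extremal metric. The only substantive step is the invertibility of the linearized operator on the $T$-invariant zero-average subspace, and this is exactly why the $T$-invariance of all three metrics $\omega, \alpha, \tilde\alpha$ is assumed: without it, $\cD^*_\omega\cD_\omega$ could pick up non-trivial kernel along the holomorphy potentials generated by $JV$ (and other directions in $\mathfrak{h}^0$), obstructing the application of the inverse function theorem.
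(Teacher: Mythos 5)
Your proof is correct and is essentially the paper's intended argument: the paper gives no separate proof of this corollary, deferring to \cite{hashimoto}[Corollary 1.4], whose proof is precisely this application of the inverse function theorem centered at the exact twisted extremal metric, where no large parameter $r$ and no approximate solutions are needed because the linearization \eqref{l2} is already invertible. One small correction to your closing remark: the invertibility of $-\cD^*_\omega\cD_\omega+F_{\omega,\alpha}$ on zero-average functions does not actually rely on $T$-invariance --- both summands are self-adjoint and negative semi-definite, so the kernel of the sum is $\ker(\cD^*_\omega\cD_\omega)\cap\ker(F_{\omega,\alpha})$, which is already just the constants because $\ker F_{\omega,\alpha}$ consists of constants by \cite{hashimoto}[Lemma 2.2]. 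The $T$-invariance of $\omega$, $\alpha$, $\tilde\alpha$ and of the function spaces is needed for a different reason, the one the paper emphasizes after \eqref{approx1}: it guarantees that $\theta_X(\phi)=\theta_X+X(\phi)$ is real-valued, so that the twisted extremal equation is a genuine real scalar equation and the metric produced is $T$-invariant as the statement requires.
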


\begin{rmk}Theorem \ref{open1} and Corollary \ref{open2} have straightforward generalizations to a $K$-invariant setting, if we assume that $\omega$ and $\alpha$ are both $K$-invariant, where $K$ is a (maximal) compact subgroup of $\text{Aut}_0(M, V)$. The only modification we need is to replace $T$ by $K$. 
\end{rmk}

\subsubsection{Closedness}Theorem \ref{open1} and Corollary \ref{open2} imply that we can solve the equation \eqref{continuity1} in an interval $[0, t_0)$ for some $0<t_0\leq 1$. 
Now we show that $t_0=1$ under the assumption that the modified $\cK$-energy $\cK_X$ is bounded below in $\cH_X$. 
Our argument is a modification of Chen-Cheng \cite{CC3}[Section 3]. 
Note that we can rewrite the equation as
\begin{equation}\label{continuity2}
\begin{split}
&\omega_\phi^n=e^F \omega^n\\
&\Delta_\phi F=-(\underline R-(1-t)n/t+\theta_X(\phi))+\text{tr}_\phi(Ric(\omega)-(1-t)\omega/t)
\end{split}
\end{equation}

Given the estimates above, we have a direct consequence that \eqref{continuity1} has a smooth solution for $t\in [0, 1)$. 
\begin{lemma}\label{kbelow1}Suppose the modified $\cK$-energy $\cK_X$ is bounded below on $\cH_X$, then \eqref{continuity1} has a smooth solution for $t\in [0, 1)$. 
\end{lemma}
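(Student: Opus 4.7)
The plan is to follow the closedness argument of Chen-Cheng \cite{CC3}[Section 3] essentially verbatim, with the uniform bound on the holomorphy potential $\theta_X(\phi)$ from Proposition \ref{z1} absorbing all the new extremal terms. Openness (Theorem \ref{open1} and Corollary \ref{open2}, applied $T$-equivariantly) shows that the set $S = \{t \in [0,1) : \eqref{continuity1} \text{ admits a smooth } T\text{-invariant solution}\}$ is open in $[0,1)$ and contains $0$. It remains to show $S$ is closed in $[0,1)$. Fix any $t^* \in (0, 1)$ and a sequence $t_i \nearrow t^*$ with $t_i \in S$ and solutions $\phi_{t_i} \in \cH_X$. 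The task is to obtain a priori estimates uniform for $t \in [\delta, t^*]$ with $\delta > 0$ (bounded away from $0$ by openness at $t = 0$), extract a smoothly convergent subsequence, and take the limit.

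The essential step is an entropy bound. Since $\cK_X$ (Berman-Berndtsson) and $\mathbb{J}$ are both convex along $C^{1,1}$ geodesics in $\cH_X$, the functional $\tilde K_t = t\cK_X + (1-t)\mathbb{J}$ is convex, so its critical point $\phi_t$ is a global minimizer on $\cH_X$, giving
\begin{equation*}
t\cK_X(\phi_t) + (1-t)\mathbb{J}(\phi_t) = \tilde K_t(\phi_t) \leq \tilde K_t(0) = 0.
\end{equation*}
Using $\mathbb{J} \geq 0$ from \eqref{ij3} and the hypothesis $\cK_X \geq -C_0$, this yields simultaneously $|\cK_X(\phi_t)| \leq C_0$ and $\mathbb{J}(\phi_t) \leq C_0 t/(1-t) \leq C_0 t^*/(1-t^*)$. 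By \eqref{ij3} the Aubin $J$-functional is uniformly bounded, and so by Theorem \ref{dar1} there is a uniform $d_1$-bound on $\phi_t$ after normalizing into $\cH_X^0$. The $d_1$-Lipschitz estimate on $\mathbb{J}^X$ proved just after Proposition \ref{z1} then forces $|\mathbb{J}^X(\phi_t)| \leq C$, and standard estimates bounding the $\mathbb{J}_{-Ric}$ functional by the Aubin $J$-functional give $|\mathbb{J}_{-Ric}(\phi_t)| \leq C$. The decomposition $\cK_X = H + \mathbb{J}_{-Ric} + \mathbb{J}^X$ now delivers the desired entropy bound $H(\phi_t) \leq C$.

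With the entropy controlled, I rewrite the continuity equation in the coupled form \eqref{continuity2}, identifying $f = -(\underline R - (1-t)n/t + \theta_X(\phi))$ and $\eta = \mathrm{Ric}(\omega) - (1-t)\omega/t$. For $t \in [\delta, t^*]$ the coefficient $(1-t)/t$ is bounded, and crucially $\|\theta_X(\phi)\|_{L^\infty}$ is uniformly bounded along $\cH_X$ by Proposition \ref{z1}; hence $\|f\|_{L^\infty}$ and $|\eta|_\omega$ are uniformly bounded, and Theorem \ref{keyestimate1} applies to yield $\|\nabla\phi_t\|_{C^0} \leq C$ together with $C^{-1}\omega \leq \omega_{\phi_t} \leq C\omega$. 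Standard bootstrapping via Evans-Krylov and Schauder theory for the coupled elliptic system, exactly as in \cite{CC2, CC3}, upgrades this to uniform $C^{k,\alpha}$ control for every $k$. A smoothly convergent subsequence then produces a $T$-invariant $\phi^* \in \cH_X$ solving \eqref{continuity1} at $t = t^*$, so $t^* \in S$ and $S = [0, 1)$. The only point of substance beyond the csck case is the uniform sup-bound on $\theta_X(\phi)$, and this is exactly the content of Zhu's Proposition \ref{z1}; that is where (modestly) the main extremal-specific issue lies.
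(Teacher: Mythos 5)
Your proposal is correct and follows essentially the same route as the paper: use the minimizing property of the solution along the continuity path together with the lower bound on $\cK_X$ and the coercivity of $\mathbb{J}$ (via \eqref{ij3} and the $d_1$-estimate on $I$) to obtain a $d_1$-bound and hence an entropy bound for $t$ bounded away from $1$, then invoke Theorem \ref{keyestimate1} with the uniform bound on $\theta_X(\phi)$ from Proposition \ref{z1}. The only difference is presentational — you spell out the openness/closedness structure that the paper leaves implicit.
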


\begin{proof}Since $\cK_X$ is bounded below, it follows that \[\tilde K_t\geq C_1(1-t)d_1(0, \phi)-C\] for any $t\in [0, 1)$. This depends on \eqref{ij3},
\[
\mathbb{J}(\phi)\geq  \frac{1}{n+1}I(\phi), 
\]
and the estimate in \cite{DR}[Proposition 5.5],
\[
I(\phi)\geq C^{-1}d_1(0, \phi)-C, \phi\in \cH_0.
\]
Suppose $\phi(t)\in \cH^0$ solves \eqref{continuity1}. By the convexity of the $\cK_X$ and  the convexity of $\tilde K_t$ (along the geodesics in $\cH_X$),  it follows that $\phi(t)$ minimizes $\tilde K_t$. Hence we have
$\tilde K_t<\infty$.    
It follows that for any $t\in [0, 1)$,
\[d_1(0, \phi)<C((1-t)^{-1}+1)\] 
Note that both $\mathbb{J}_{-Ric}(\phi)$ and $\mathbb{J}^X(\phi)$ are bounded by $Cd_1(0, \phi)$ for some uniformly bounded constant $C$. It follows that, for any $t\in [0, 1)$, 
\begin{equation}\label{t0}
n!H(\phi)=\int_M \log \frac{\omega_{\phi}^n}{\omega^n} \omega_{\phi}^n<C((1-t)^{-1}+1).
\end{equation}
It then follows from Theorem \ref{keyestimate1} that for any $t\in [0, 1)$, \eqref{continuity1} has a smooth solution. 
\end{proof}

Now we consider the behavior when $t\rightarrow 1$. The purpose is to show for any sequence $t_i\rightarrow 1$, there exists a limit of $\omega_{\tilde \phi_i}$ after modifying by suitable choice of elements in $G$, and the limit defines a smooth extremal metric. Note that the estimate in \eqref{t0}  blows up when $t\rightarrow 1$. We need to use the properness of modified $\cK$-energy in an effective way.  Since the properness only implies a distance bound of $d_{1, G}$, it is then necessary to apply an automorphism $\sigma_i$ in $G$ (at each time $t_i$) such that the resulting potential remains in a bounded set of $\cH_X$. We proceed as follows. 
Let $\tilde \phi_i\in \cH^0_X$ be the solution of \eqref{continuity1} at $t_i$, for $t_i$ increasing to $1$. The starting point is to show that $\tilde \phi_i$ is a minimizing sequence of the modified $\cK_X$ energy. 
\begin{lemma}\label{kenergy1}We have the following, 
\begin{equation}\label{j100}
\begin{split}
&\tilde K_{t_i}(\tilde \phi_i)=\inf_{\phi\in \cH_X} \tilde K_{t_i}(\phi)\rightarrow \inf_{\cH_X} \cK_X(\phi), t_i\rightarrow 1\\
&\cK_X(\tilde \phi_i)\rightarrow \inf_{\cH_X} \cK_X(\phi), t_i\rightarrow 1\\
&(1-t_i)\mathbb{J}(\tilde \phi_i)\rightarrow 0.
\end{split}
\end{equation}
\end{lemma}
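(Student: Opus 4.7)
The argument rests on two ingredients already in hand: the minimizing property of $\tilde\phi_i$ for $\tilde K_{t_i}$, and the inequality $\mathbb{J}\geq 0$ from \eqref{ij3}, together with the standing hypothesis that $\cK_X$ is bounded below on $\cH_X$.

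First I would observe that $\tilde\phi_i$ is an absolute minimizer of $\tilde K_{t_i}=t_i\cK_X+(1-t_i)\mathbb{J}$ on $\cH_X$. The continuity equation \eqref{continuity1} is precisely the Euler-Lagrange equation $\delta\tilde K_{t_i}(\tilde\phi_i)=0$, and both $\cK_X$ and $\mathbb{J}$ are convex along $C^{1,1}$-geodesics in $\cH_X$, so any critical point is a global minimum. Then I would bracket $\tilde K_{t_i}(\tilde\phi_i)$ between two quantities that both tend to $\inf_{\cH_X}\cK_X$. For the upper bound, fix an arbitrary $\psi\in\cH_X$; by minimality,
\[
\tilde K_{t_i}(\tilde\phi_i)\leq \tilde K_{t_i}(\psi)=t_i\cK_X(\psi)+(1-t_i)\mathbb{J}(\psi),
\]
so $\limsup_i\tilde K_{t_i}(\tilde\phi_i)\leq\cK_X(\psi)$, and taking the infimum over $\psi\in\cH_X$ gives $\limsup_i\tilde K_{t_i}(\tilde\phi_i)\leq\inf_{\cH_X}\cK_X$. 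For the lower bound, using $\mathbb{J}(\tilde\phi_i)\geq 0$ and $\cK_X(\tilde\phi_i)\geq\inf_{\cH_X}\cK_X$,
\[
\tilde K_{t_i}(\tilde\phi_i)\geq t_i\cK_X(\tilde\phi_i)\geq t_i\inf_{\cH_X}\cK_X\longrightarrow\inf_{\cH_X}\cK_X,
\]
which proves the first line of \eqref{j100}.

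The remaining two lines are immediate consequences of the first. From the identity
\[
0\leq (1-t_i)\mathbb{J}(\tilde\phi_i)=\tilde K_{t_i}(\tilde\phi_i)-t_i\cK_X(\tilde\phi_i)\leq \tilde K_{t_i}(\tilde\phi_i)-t_i\inf_{\cH_X}\cK_X
\]
the right-hand side tends to $0$ by the first line, which gives $(1-t_i)\mathbb{J}(\tilde\phi_i)\to 0$. Finally,
\[
t_i\cK_X(\tilde\phi_i)=\tilde K_{t_i}(\tilde\phi_i)-(1-t_i)\mathbb{J}(\tilde\phi_i)\longrightarrow\inf_{\cH_X}\cK_X,
\]
and dividing by $t_i\to 1$ yields $\cK_X(\tilde\phi_i)\to\inf_{\cH_X}\cK_X$.

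I do not anticipate any real obstacle in establishing \eqref{j100}: the argument is a short sandwich together with elementary algebra, and the only structural input is that $\cK_X$ is bounded below (so that $\inf_{\cH_X}\cK_X$ is a finite number). The genuinely hard part of the closedness argument -- extracting a smooth limit of $\omega_{\tilde\phi_i}$ after correcting by a suitable $\sigma_i\in G$, and thereby converting the properness of $\cK_X$ relative to $d_{1,G}$ into uniform $d_1$-control via the Chen--Cheng estimates of Theorem \ref{keyestimate1}--Theorem \ref{keyestimate2} -- lies in the subsequent lemmas, for which the present statement merely records the preliminary fact that $\{\tilde\phi_i\}$ is a minimizing sequence for $\cK_X$.
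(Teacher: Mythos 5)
Your proof is correct and follows essentially the same route as the paper: both use the minimizing property of $\tilde\phi_i$ for the convex functional $\tilde K_{t_i}$, compare against a near-minimizer of $\cK_X$ (your arbitrary $\psi$ followed by an infimum is the paper's $\phi^\epsilon$), and sandwich using $\mathbb{J}\geq 0$. Your explicit derivation of the second and third lines of \eqref{j100} from the first is the same elementary algebra the paper leaves implicit.
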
 
\begin{proof}
Since $\cK_X$ is bounded below over $\cH_X$, we can choose $\phi^\epsilon \in\cH_X$ such that $\cK_X(\phi^\epsilon)\leq \inf \cK_X+\epsilon$.
Note that $t\cK_X+(1-t)\mathbb{J}$ is convex and hence $\tilde \phi_i$ minimizes $\tilde K_{t_i}$. 
Hence
\[ \tilde K_{t_i}(\tilde \phi_i)\leq t_i\cK_X(\phi^\epsilon)+(1-t_i)\mathbb{J}(\phi^\epsilon).
\]
It follows that 
\[
\lim\sup_{i\rightarrow\infty} \tilde K_{t_i}(\tilde \phi_i)\leq \cK_X(\phi^\epsilon)\leq \inf_{\cH_X} \cK_X+\epsilon.
\]
On the other hand, since $\mathbb{J}\geq 0$, for $i$ sufficiently large,
\[
t_i \inf_{\cH_X} \cK_X\leq t_i\cK_X(\tilde \phi_i)\leq \tilde K_{t_i}(\tilde\phi_i)\leq \inf_{\cH_X} \cK_X+\epsilon
\]
This proves all three statements in \eqref{j100}. 
\end{proof}
As a direct consequence of properness with respect to $d_{1, G}$ and Lemma \ref{kenergy1}, this gives the desired distance bound modulo $G$.
\begin{cor}We have the  bound on $d_{1, G}$, 
\[
\sup_i d_{1, G}(0, \tilde \phi_i)<\infty. 
\]
\end{cor}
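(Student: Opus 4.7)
The plan is to derive the bound by directly contradicting the properness hypothesis using the information extracted from Lemma \ref{kenergy1}. By Definition \ref{rp}, properness of $\cK_X$ with respect to $d_{1,G}$ asserts two things: first, $\cK_X$ is bounded below on $\cH_X$, and second, along any sequence $\phi_i \in \cH_X^0$ with $d_{1,G}(0,\phi_i) \to \infty$ one must have $\cK_X(\phi_i) \to \infty$. Since each $\tilde\phi_i \in \cH_X^0$, the contrapositive of the second condition is precisely the statement we need, provided we can show $\cK_X(\tilde\phi_i)$ stays bounded above.

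First I would observe that Lemma \ref{kenergy1} supplies exactly this upper bound: it shows $\cK_X(\tilde\phi_i) \to \inf_{\cH_X} \cK_X$, a finite number by the first clause of properness. In particular $\{\cK_X(\tilde\phi_i)\}$ is a bounded sequence of real numbers.

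Then I would argue by contradiction. Suppose $d_{1,G}(0,\tilde\phi_i)$ were unbounded. Passing to a subsequence we could assume $d_{1,G}(0,\tilde\phi_i) \to \infty$. Properness would then force $\cK_X(\tilde\phi_i) \to \infty$, contradicting the boundedness just established. Hence $\sup_i d_{1,G}(0,\tilde\phi_i) < \infty$, which is the claim.

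The argument is essentially tautological once Lemma \ref{kenergy1} is in hand, so there is no genuine obstacle at this step; the substantive work was done earlier in showing that $\tilde\phi_i$ is a minimizing sequence for $\cK_X$. The only minor subtlety worth flagging is that the argument uses $\tilde\phi_i \in \cH_X^0$ (i.e. the normalization $\mathbb{I}(\tilde\phi_i)=0$), which must match the domain on which properness is formulated; this is automatic from the construction of the continuity path \eqref{continuity1}.
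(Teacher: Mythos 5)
Your proof is correct and is exactly the argument the paper intends: the corollary is stated as a direct consequence of Definition \ref{rp} together with the upper bound $\cK_X(\tilde\phi_i)\to\inf_{\cH_X}\cK_X$ from Lemma \ref{kenergy1}, and your contrapositive/subsequence argument fills in precisely those details. No gaps.
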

Hence we can find a $\sigma_i\in G, \phi_i\in \cH^0_X$ such that
\begin{equation}
\omega_{\phi_i}=\sigma_i^*{\omega_{\tilde \phi_i}},\;\text{and}\; \sup_{i} d_1(0, \phi_i)<\infty. 
\end{equation}
With the uniform bound of distance $d_1(0, \phi_i)$, we need to show that $\phi_i$ converges uniformly. 
We show that the entropy of $\phi_i$ is uniformly bounded in the next,
\begin{prop}\label{e1000}
We have
\begin{equation}\label{entropy100}
H_\infty:=\sup_in!H(\phi_i)=\sup_i \int_M \log \frac{\omega_{\phi_i}^n}{\omega^n} \omega_{\phi_i}^n<\infty
\end{equation}
\end{prop}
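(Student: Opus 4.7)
The strategy is to exploit the decomposition $\cK_X = H + \mathbb{J}_{-Ric} + \mathbb{J}^X$ and control each non-entropy piece from the uniform bound on $d_{1}(0, \phi_i)$, while controlling $\cK_X(\phi_i)$ itself via the $G$-invariance of the modified $\cK$-energy. Rearranging then isolates a uniform bound on $H(\phi_i)$.

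\emph{Step 1 (boundedness of $\cK_X(\phi_i)$).} Properness in the sense of Definition \ref{rp} includes that $\cK_X$ is bounded below on $\cH_X$, so Lemma \ref{ginvariant01} yields that $\cK_X$ is invariant under the $G = \text{Aut}_0(M,V)$-action on $\cH_X^0$. Since $\omega_{\phi_i} = \sigma_i^* \omega_{\tilde\phi_i}$ with $\sigma_i \in G$ and both potentials are normalized to lie in $\cH_X^0$, one obtains $\cK_X(\phi_i) = \cK_X(\tilde\phi_i)$. By Lemma \ref{kenergy1}, $\cK_X(\tilde\phi_i) \to \inf_{\cH_X}\cK_X$, hence $\{\cK_X(\phi_i)\}$ is uniformly bounded.

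\emph{Step 2 (the $\mathbb{J}$-type pieces are bounded).} The $d_1$-Lipschitz estimate for $\mathbb{J}^X$ established after Proposition \ref{z1}, combined with $\sup_i d_1(0, \phi_i) < \infty$, gives $\sup_i |\mathbb{J}^X(\phi_i)| < \infty$. For $\mathbb{J}_{-Ric} = \underline R\, \mathbb{I} - \mathbb{I}_{Ric}$, one uses that $\mathbb{I}$ and $\mathbb{I}_\chi$ for a smooth real $(1,1)$-form $\chi$ admit $d_1$-continuous extensions to $\cE^1$ with $|\mathbb{I}_\chi(\phi)| \leq C(1 + d_1(0,\phi))$; this follows from the Darvas comparison (Theorem \ref{dar1}) between $d_1$ and $I_1$, together with the bound $-C\omega \le \chi \le C\omega$. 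Applying this with $\chi = Ric$ gives $\sup_i |\mathbb{J}_{-Ric}(\phi_i)| < \infty$.

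\emph{Conclusion and main obstacle.} Subtracting, $H(\phi_i) = \cK_X(\phi_i) - \mathbb{J}_{-Ric}(\phi_i) - \mathbb{J}^X(\phi_i)$ is uniformly bounded, which is \eqref{entropy100}. The subtle step is Step 2: the $\mathbb{J}^X$ bound is clean because Proposition \ref{z1} provides a uniform $C^0$-bound on $\theta_X(\phi)$, whereas no such pointwise bound is available for $\mathrm{tr}_{\omega_\phi}(Ric) - \underline R$. Consequently the $\mathbb{J}_{-Ric}$ estimate must pass through integrated pluripotential-theoretic bounds and the $d_1 \asymp I_1$ comparison, rather than a pointwise control of the integrand as in the $\mathbb{J}^X$ case.
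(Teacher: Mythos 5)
Your proof is correct and follows essentially the same route as the paper: bound $\cK_X(\phi_i)=\cK_X(\tilde\phi_i)$ via the $G$-invariance from Lemma \ref{ginvariant01}, then subtract the $\mathbb{J}_{-Ric}$ and $\mathbb{J}^X$ terms, which are controlled by the uniform $d_1$-bound. The paper states the boundedness of $|\mathbb{J}_{-Ric}|$ and $|\mathbb{J}^X|$ without elaboration; your added detail on why these are $d_1$-controlled (Proposition \ref{z1} for $\mathbb{J}^X$, the $d_1\asymp I_1$ comparison for $\mathbb{J}_{-Ric}$) is consistent with the estimates the paper invokes elsewhere, e.g.\ in Lemma \ref{kbelow1}.
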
 
\begin{proof}
By Lemma \ref{ginvariant01}, $\cK_X$ is invariant under the action of $\text{Aut}_0(M, V)$ since $\cK_X$ is bounded below over $\cH_X$. Hence  $\sup_i\cK_X(\phi_i)=\sup_i \cK_X(\tilde \phi_i)<\infty$. 
Recall 
\[
\cK_X(\phi)=H(\phi)+\mathbb{J}_{-Ric}(\phi)+\mathbb{J}^X(\phi)
\]
Since  $|\mathbb{J}_{-Ric}|, |\mathbb{J}^X|$ are bounded when $\sup_id_1(0, \phi_i)<\infty$. This implies that $H_\infty<\infty.$ \end{proof}

We need to consider the equation which $\phi_i$ satisfies. Denote $\omega_i=\sigma_i^* \omega=\omega+\sqrt{-1}\p\bar\p h_i$, with the normalization $\sup h_i=0$ (note that $h_i$ is in $\cH_X$, but not in $\cH^0_X$ in general). 

\begin{lemma}\label{l100}The potential $\phi_i$ satisfies the following equations
\begin{equation}\label{equation-i0}
\begin{split}
&\omega_{\phi_i}^n=e^{F_i}\omega^n\\
&\Delta_{\phi_i} F_i=(\underline R-\frac{1-t_i}{t_i}n+\theta_i)+\text{tr}_{\phi_i}(Ric(\omega)-\frac{1-t_i}{t_i}\omega_i),
\end{split}
\end{equation}
where $\theta_i=\theta_{X}(\tilde \phi_i)\circ \sigma_i=\theta_X(\phi_i)$. 
\end{lemma}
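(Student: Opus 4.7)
The plan is to push the continuity equation \eqref{continuity2} for $\tilde\phi_i$ forward through the automorphism $\sigma_i$, and then rewrite the resulting equation so that all reference metrics are the fixed background $\omega$ rather than the moving metric $\omega_i=\sigma_i^*\omega$. By definition $\omega_{\phi_i}=\sigma_i^*\omega_{\tilde\phi_i}$, so pulling the relation $\omega_{\tilde\phi_i}^n=e^{\tilde F_i}\omega^n$ back by $\sigma_i$ gives
\[
\omega_{\phi_i}^n = e^{\tilde F_i\circ\sigma_i}\,\sigma_i^*\omega^n = e^{\tilde F_i\circ\sigma_i}\,\omega_i^n = e^{\tilde F_i\circ\sigma_i + a_i}\,\omega^n,
\]
where $a_i=\log(\omega_i^n/\omega^n)$. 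This defines $F_i:=\tilde F_i\circ\sigma_i+a_i$ and yields the first equation of \eqref{equation-i0}.

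For the second equation, I would apply $\sigma_i^*$ to the PDE of \eqref{continuity2}. Since $\sigma_i$ is a holomorphic isometry from $(M,\omega_{\phi_i})$ to $(M,\omega_{\tilde\phi_i})$, the Laplacian and trace transform as
\[
(\Delta_{\tilde\phi_i}\tilde F_i)\circ\sigma_i=\Delta_{\phi_i}(\tilde F_i\circ\sigma_i),\qquad (\mathrm{tr}_{\tilde\phi_i}\eta)\circ\sigma_i=\mathrm{tr}_{\phi_i}(\sigma_i^*\eta),
\]
and under pullback $\sigma_i^*\mathrm{Ric}(\omega)=\mathrm{Ric}(\sigma_i^*\omega)=\mathrm{Ric}(\omega_i)$, while $\sigma_i^*\omega=\omega_i$. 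Thus the second equation of \eqref{continuity2} becomes
\[
\Delta_{\phi_i}(\tilde F_i\circ\sigma_i)=-\Bigl(\underline R-\tfrac{1-t_i}{t_i}n+\theta_X(\tilde\phi_i)\circ\sigma_i\Bigr)+\mathrm{tr}_{\phi_i}\Bigl(\mathrm{Ric}(\omega_i)-\tfrac{1-t_i}{t_i}\omega_i\Bigr).
\]
To replace $\mathrm{Ric}(\omega_i)$ by $\mathrm{Ric}(\omega)$ I use the standard identity $\mathrm{Ric}(\omega_i)=\mathrm{Ric}(\omega)-\sqrt{-1}\partial\bar\partial a_i$, which under the trace gives $\mathrm{tr}_{\phi_i}\mathrm{Ric}(\omega_i)=\mathrm{tr}_{\phi_i}\mathrm{Ric}(\omega)-\Delta_{\phi_i}a_i$; absorbing the $\Delta_{\phi_i}a_i$ term into the left hand side converts $\Delta_{\phi_i}(\tilde F_i\circ\sigma_i)$ into $\Delta_{\phi_i}F_i$, which produces the form of the second equation stated in \eqref{equation-i0} (with the Ricci background restored to $\omega$ but the $\omega_i$ kept in the last term, as desired).

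The only non-bookkeeping step is identifying the holomorphic potential: I would verify $\theta_X(\tilde\phi_i)\circ\sigma_i=\theta_X(\phi_i)$. Because $\sigma_i\in G=\mathrm{Aut}_0(M,V)$ commutes with the flow of $V$, its differential preserves $V$ and hence the complex extremal vector field $X=V-\sqrt{-1}JV$, i.e.\ $\sigma_i^*X=X$. Pulling back the defining equation $X=g_{\tilde\phi_i}^{j\bar k}\partial_{\bar k}\theta_X(\tilde\phi_i)\,\partial_{z_j}$ by $\sigma_i$ and invoking $\sigma_i^*\omega_{\tilde\phi_i}=\omega_{\phi_i}$ identifies $\theta_X(\tilde\phi_i)\circ\sigma_i$ as a holomorphic potential for $X$ with respect to $\omega_{\phi_i}$; the normalization $\int_M\theta_X(\phi_i)\omega_{\phi_i}^n=0$ is preserved because $\sigma_i$ is a diffeomorphism, so the two potentials agree, giving $\theta_i=\theta_X(\phi_i)$.

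The main obstacle, such as it is, is bookkeeping — keeping straight how the Laplacian, trace, Ricci form and volume form transform under $\sigma_i$, and carefully converting derivatives of $a_i$ between the two sides of the equation. There is no genuine analytic difficulty: once one records $\mathrm{Ric}(\omega_i)=\mathrm{Ric}(\omega)-\sqrt{-1}\partial\bar\partial a_i$ and exploits $\sigma_i^*X=X$ from $\sigma_i\in\mathrm{Aut}_0(M,V)$, the lemma follows.
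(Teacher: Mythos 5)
Your proof is correct and follows essentially the same route as the paper: pull back the volume relation and the PDE by $\sigma_i$, use $\mathrm{Ric}(\omega_i)=\mathrm{Ric}(\omega)-\sqrt{-1}\p\bar\p\log(\omega_i^n/\omega^n)$ to absorb the correction term into $F_i$, and use $\sigma_i^*X=X$ (since $\sigma_i\in\mathrm{Aut}_0(M,V)$) to identify $\theta_i=\theta_X(\phi_i)$. The only cosmetic difference is that the paper manipulates the equation in its wedge-product (weak) form while you work with Laplacians and traces directly; note also that your displayed sign $-(\underline R-\tfrac{1-t_i}{t_i}n+\theta_i)$ is the one consistent with \eqref{continuity2}, the opposite sign in the lemma's statement being a typographical slip.
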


\begin{proof}We write $\tilde F_i=\log \frac{\omega^n_{\tilde \phi_i}}{\omega^n}$. We have the following,
\[
\omega_{\phi_i}^n=\sigma_i^{*} (\omega_{\tilde \phi_i}^n),\; \sigma_i^{*} (e^{\tilde F_i}\omega^n)=e^{\tilde F_i \circ \sigma_i} \omega_i^n
\]
It follows that $\omega_{\phi_i}^n=e^{\tilde F_i \circ \sigma_i} \omega_i^n.$ Denote $F_i=\log \frac{\omega^n_{\phi_i}}{\omega^n}$. Hence we have
\begin{equation}\label{volume1}
F_i=\tilde F_i \circ \sigma_i+\log \frac{\omega_i^n}{\omega^n}
\end{equation}
Now we write the second equation (at $t=t_i$) in \eqref{continuity1}  in its equivalent form,
\[
\sqrt{-1} \p\bar\p \tilde F_i\wedge \frac{\omega^{n-1}_{\tilde \phi_i}}{(n-1)!}=-(\underline R-\frac{1-t_i}{t_i}n-\theta_{X}(\tilde \phi_i))\frac{\omega^n_{\tilde \phi_i}}{n!}+(Ric(\omega)-\frac{1-t_i}{t_i}\omega)\wedge \frac{\omega^{n-1}_{\tilde \phi_i}}{(n-1)!}
\]
Pulling back by $\sigma_i$, we obtain
\[
\sqrt{-1} \p\bar\p (\tilde F_i\circ \sigma_i)\wedge \frac{\omega^{n-1}_{\phi_i}}{(n-1)!}=-(\underline R-\frac{1-t_i}{t_i}n-\theta_{X}(\tilde \phi_i)\circ \sigma_i)\frac{\omega^n_{\phi_i}}{n!}+(Ric(\omega_i)-\frac{1-t_i}{t_i}\omega_i)\wedge \frac{\omega^{n-1}_{\phi_i}}{(n-1)!}
\]
Combining \eqref{volume1}, we obtain
\begin{equation}\label{equation-i1}
\sqrt{-1} \p\bar\p F_i\wedge \frac{\omega^{n-1}_{\phi_i}}{(n-1)!}=-(\underline R-\frac{1-t_i}{t_i}n-\theta_{X}(\tilde \phi_i)\circ \sigma_i)\frac{\omega^n_{\phi_i}}{n!}+(Ric(\omega)-\frac{1-t_i}{t_i}\omega_i)\wedge \frac{\omega^{n-1}_{\phi_i}}{(n-1)!}
\end{equation}
For simplicity we denote \begin{equation}\label{potential1}\theta_{i}:=\theta_{X}(\tilde \phi_i)\circ \sigma_i=\theta_{(\sigma_i)^{-1}_*X}(\phi_i)=\theta_X(\phi_i),\end{equation} 
where we use that $G$-action preserves $X$. 
We can rewrite \eqref{equation-i1} as
\begin{equation}
\Delta_{\phi_i} F_i=(\underline R-\frac{1-t_i}{t_i}n+\theta_i)+\text{tr}_{\phi_i}(Ric(\omega)-\frac{1-t_i}{t_i}\omega_i)
\end{equation}
This completes the proof.
\end{proof}

With the preparation above, we can then state the main theorem which gives an extremal metric with extremal vector field $V.$
\begin{thm}\label{con100}When $t_i\rightarrow 1$, $\omega_{\phi_i}$ converges smoothly to a smooth extremal metric $\omega_{\phi}$ with extremal vector field $V.$ 
\end{thm}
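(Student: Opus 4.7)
The plan is to apply Chen--Cheng's a priori estimates to the twisted equation \eqref{equation-i0} satisfied by $\phi_i$, extract uniform $C^\infty$ bounds, and then pass to the smooth limit. The ingredients in hand are: (a) the uniform $d_1$-bound $\sup_i d_1(0,\phi_i)<\infty$ produced by the properness assumption together with the $G$-correction $\sigma_i$; (b) the uniform entropy bound $H_\infty<\infty$ from Proposition \ref{e1000}; (c) Zhu's uniform bound $\|\theta_i\|_{L^\infty}\le C_1$ from Proposition \ref{z1}; and (d) the $T$-invariance of each $\phi_i$. The genuine difficulty is that the twist $\omega_i=\sigma_i^*\omega=\omega+\sqrt{-1}\partial\bar\partial h_i$ appearing on the right-hand side of \eqref{equation-i0} is not uniformly controlled in $C^0$, since the correcting automorphisms $\sigma_i$ may degenerate inside $G$. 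Theorem \ref{keyestimate2} is designed precisely for this situation.

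I cast \eqref{equation-i0} in the form of \eqref{continuity8} by setting
\[
R_{\text{eff}} = \underline R - \tfrac{1-t_i}{t_i}n + \theta_i, \qquad \text{Ric}_{\text{eff}} = \text{Ric}(\omega), \qquad \beta_i = \tfrac{1-t_i}{t_i}\omega_i,
\]
and decomposing $\beta_i = \beta_{0,i} + \sqrt{-1}\partial\bar\partial f_i$ with $\beta_{0,i}=\tfrac{1-t_i}{t_i}\omega$ and $f_i=\tfrac{1-t_i}{t_i}h_i$; note $\sup f_i=0$. The critical input $\int_M e^{-p_0 f_i}\omega^n\le C$ follows from the uniform Skoda/H\"ormander integrability for $\omega$-psh functions with supremum zero: there is a uniform $\alpha_0>0$ with $\int_M e^{-\alpha_0 h}\omega^n\le C$ for every such $h$. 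Once $t_i$ is close enough to $1$ that $p_0(1-t_i)/t_i<\alpha_0$, this bound is uniform in $i$. Theorem \ref{keyestimate2} then delivers, for any fixed large $p$,
\[
\|F_i+f_i\|_{W^{1,2p}} + \|n+\Delta\phi_i\|_{L^p} \le C.
\]
Since $f_i\to 0$ in every $L^q$, Sobolev embedding gives $F_i\in C^\alpha$ uniformly, so $e^{F_i}$ is uniformly pinched; feeding this into Theorem \ref{keyestimate1} yields $\|\nabla\phi_i\|_{C^0}\le C$ and $C_0^{-1}\omega\le\omega_{\phi_i}\le C_0\omega$. Standard bootstrapping of the coupled second-order system for $(\phi_i,F_i)$ then gives uniform $C^{k,\alpha}$ bounds for every $k$.

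With uniform $C^\infty$ control, Arzel\`a--Ascoli extracts a subsequence $\phi_i\to\phi_\infty$ in $C^\infty$. Passing to the limit in \eqref{equation-i0}, the $(1-t_i)/t_i$ terms vanish, while $\theta_X(\phi_i)\to\theta_X(\phi_\infty)$ by \eqref{hpotential}, yielding
\[
R_{\phi_\infty}-\underline R = \theta_X(\phi_\infty),
\]
which is the extremal equation \eqref{extremal1} with extremal vector field $V$. The $T$-invariance persists under the limit, so $\omega_{\phi_\infty}\in\cH_X$ as required. The anticipated main obstacle is the uniform integrability bound $\int_M e^{-p_0 f_i}\omega^n\le C$: it is what absorbs the degeneration of the twist $\omega_i$ along the $G$-orbit, and it is where the properness hypothesis (through the $d_1$-bound and the uniform $\alpha_0$-estimate) does the real work. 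Everything else is a careful transcription of Chen--Cheng's csck argument to the extremal case, with Proposition \ref{z1} playing the role of the trivial bound $\theta_X\equiv 0$ available in the csck setting.
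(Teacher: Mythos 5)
Your skeleton is correct up to and including the application of Theorem \ref{keyestimate2}: casting \eqref{equation-i0} in the form \eqref{continuity8}, and obtaining the uniform bound $\int_M e^{-p_0 f_i}\omega^n\le C$ from the $\alpha$-invariant once $p_0(1-t_i)/t_i$ is below the uniform Skoda exponent, is exactly what the paper does (via Chen--Cheng's Lemma 3.11). The argument breaks down immediately afterwards. Theorem \ref{keyestimate2} controls $F_i+f_i$ in $W^{1,2p}$, not $F_i$ alone; since $f_i=\tfrac{1-t_i}{t_i}h_i$ is controlled only in $L^q$ (and even that requires an argument you omit --- see below), not in $C^0$, you cannot split off $F_i$ and conclude that $e^{F_i}$ is uniformly pinched. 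Worse, Theorem \ref{keyestimate1} cannot be ``fed'' the equation \eqref{equation-i0} at all: its hypothesis requires $\max|\eta|_\omega<\infty$ for the $(1,1)$-form on the right-hand side, and here that form is $\mathrm{Ric}(\omega)-\tfrac{1-t_i}{t_i}\omega_i$ with $\omega_i=\sigma_i^*\omega$ degenerating along the $G$-orbit --- precisely the difficulty you flagged in your opening paragraph and then set aside. So the uniform $C^{k,\alpha}$ bounds, and with them the smooth convergence of the sequence, are not established.

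The paper's proof works only with the estimates actually available --- $\phi_i$ bounded in $W^{2,p}$ and $F_i+f_i$ bounded in $W^{1,2p}$ --- and passes to a \emph{weak} limit: $\phi_i\to\phi_*$ in $C^{1,\alpha}$, $F_i+f_i\to F_*$ in $C^{\alpha}$, and $\omega_{\phi_i}^k\to\omega_{\phi_*}^k$ weakly in $L^p$. To identify the limit equation one needs the claim $\int_M|f_i|\omega^n\to 0$, which you assert without proof; in the paper it follows from $(1-t_i)\mathbb{J}(\tilde\phi_i)\to 0$ in \eqref{j100}, the $d_1$-isometry of the $G$-action, Theorem \ref{dar1}, and the normalization $\sup h_i=0$ used to bound $\mathbb{I}(h_i)$. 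With $e^{-f_i}\to 1$ in $L^p$ one then obtains $\omega_{\phi_*}^n=e^{F_*}\omega^n$ together with the weak formulation \eqref{v105} of the extremal equation, and only \emph{at that stage} do the estimate of \cite{chenhe12} and standard elliptic theory, applied to the limit equation in which the $\omega_i$-term has disappeared, yield smoothness of $\phi_*$. You should restructure the second half of your argument along these lines rather than attempting uniform $C^\infty$ bounds along the sequence.
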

\begin{proof}Note that the equation \eqref{equation-i0} differs Chen's continuity path for csck only by the term $\theta_i$, which is uniformly bounded. Hence the argument proceeds almost identical to Chen-Cheng \cite{CC3}[Section 3].
Denote 
\[
R_i=\underline{R}-\frac{1-t_i}{t_i}n+\theta_i, \beta_i=\frac{1-t_i}{t_i}\omega_i, (\beta_0)_i=\frac{1-t_i}{t_i}\omega, f_i=\frac{1-t_i}{t_i}h_i.
\]
Chen-Cheng \cite{CC3}[Lemma 3.11] prove that (using Tian's $\alpha$-invariant \cite{Tian97}),  for any $p>1$, there exists $\epsilon_p>0$ such that if $t_i\in (1-\epsilon_p, 1)$, one has
\[
\int_M e^{-pf_i}\omega^n\leq C,
\]
for a constant $C$ uniformly bounded (independent of $p$ due to the choice of $\epsilon_p$). Hence Chen-Cheng's Theorem \ref{keyestimate2} applies to get
the following estimate
\[
\|F_i+f_i\|_{W^{1, 2p}}+\|n+\Delta \phi_i\|_{L^p}\leq C_1,
\]
where $C_1=C_1(p, \omega, H_\infty)$ (see \eqref{entropy100}). By taking a subsequence, we can pass to the limit to get $T$-invariant functions $\phi_*\in W^{2, p}, F_{*}\in W^{1, p}$ for any $p<\infty$  such that
\begin{equation}
\begin{split}
&\phi_i\rightarrow \phi_*\; \text{in}\; C^{1, \alpha}\; \text{and}\; \sqrt{-1}\p\bar\p \phi_i\rightarrow \p\bar\p \phi_*\;\text{weakly in}\; L^p\\
&F_i+f_i\rightarrow F_{*}\;\text{in}\; C^{\alpha}\; \text{and}\; \nabla(F_i+f_i)\rightarrow \nabla F_*\;\text{weakly in}\; L^p\\
&\omega_{\phi_i}^k\rightarrow \omega_{\phi_*}^k\;\text{weakly in}\; L^p, 1\leq k\leq n.
\end{split}
\end{equation}
It follows that $\phi_*$ is a weak solution of extremal metric in the following sense,
\begin{equation}\label{vol100}\omega^n_{\phi_*}=e^{F_{*}}\omega^n,\end{equation}
and for any $u\in C^\infty(M)$, we have
\begin{equation}\label{v102}
-\int_M d^c F_*\wedge du\wedge \frac{\omega^{n-1}_{\phi_*}}{(n-1)!}=-\int_M u(\underline R+\theta_X(\phi_*))\frac{\omega_{\phi_*}^n}{n!}+uRic\wedge \frac{\omega^{n-1}_{\phi_*}}{(n-1)!}.
\end{equation}
Next we claim that $\int_M|f_i|\omega^n\rightarrow 0$. Given the claim, it follows that $e^{-f_i}\rightarrow 1$ in $L^p$ for any $p<\infty$ by a modified Lebesgue's dominated convergence theorem since $\sup_i\int_Me^{-p^{'}f_i}\omega^n<\infty$  (take $p<p^{'}$). 

By \eqref{j100},  $(1-t_i)\mathbb{J}(\tilde \phi_i)\rightarrow 0$ when $i\rightarrow \infty$. This implies that $(1-t_i)d_1(0, \tilde \phi_i)\rightarrow 0$. 
Note that we have the normalization condition $\sup h_i=0$. Denote $\tilde h_i=h_i-\mathbb{I}(h_i)\text{Vol}^{-1}(M)\in \cH_0$.  It follows that
\[
d_1(0, \tilde h_i)-d_1(0, \phi_i)\leq d_1(\tilde h_i,  \phi_i)=d_1(\sigma[0], \sigma[\tilde\phi_i])=d_1(0, \tilde \phi_i),
\]
in the last step we know that $G$ acts on $\cH_0$ isometrically. Hence we get $(1-t_i)d_1(0, \tilde h_i)\rightarrow 0$. By Theorem \ref{dar1}, 
$(1-t_i)\int_M |\tilde h_i|\omega^n\rightarrow 0.$ Since $\sup h_i=0$, we have for a uniformly bounded $C$, 
\[
0\leq \int_M (-h_i)\omega^n\leq C
\]
We have $\mathbb{I}(h_i)\leq 0$ and \[\mathbb{I}(h_i)-\int_M h_i\frac{\omega^n}{n!}=\mathbb{J}(h_i)\geq 0\]
It follows that $\mathbb{I}(h_i)$ is uniformly bounded, and hence $\int_M|(1-t_i)h_i|\omega^n\rightarrow 0$. This proves the claim $\int_M|f_i|\omega^n\rightarrow 0$. This in particular proves \eqref{vol100}. By \cite{chenhe12} we know that there exists a uniform constant $C_0>1$ such that
\begin{equation}\label{ch100}
C_0^{-1}\omega \leq \omega_{\phi_*}\leq C_0\omega, \phi_*\in W^{3, p}.
\end{equation}
By \eqref{equation-i0}, we have
\[\Delta_{\phi_i} (F_i+f_i)=(\underline R-\frac{1-t_i}{t_i}n+\theta_i)+\text{tr}_{\phi_i}(Ric(\omega)-\frac{1-t_i}{t_i}\omega)
\]
For any smooth function $u$, we write
\begin{equation*}
\int_M  (F_i+f_i) d^cdu\wedge \frac{\omega^{n-1}_{\phi_i}}{(n-1)!}=-\int_M u(\underline R-\frac{1-t_i}{t_i}n+\theta_X(\phi_i))\frac{\omega_{\phi_i}^n}{n!}+u(Ric-\frac{1-t_i}{t_i}\omega)\wedge \frac{\omega^{n-1}_{\phi_i}}{(n-1)!}.
\end{equation*}
Given the convergence of $F_i+f_i\rightarrow F_*$ in $C^\alpha$, $\omega_i^k$ converges weakly and $\phi_i$ converges to $\phi_*$ in $C^\alpha$, we can then pass to the limit to get
\begin{equation}\label{v105}
\int_M  F_* d^cdu\wedge \frac{\omega^{n-1}_{\phi_*}}{(n-1)!}=-\int_M u(\underline R+\theta_X(\phi_*))\frac{\omega_{\phi_i}^n}{n!}+u Ric\wedge \frac{\omega^{n-1}_{\phi_*}}{(n-1)!}.
\end{equation}
The standard elliptic theory together with \eqref{ch100} implies that $\phi_*$ is a smooth extremal metric with extremal vector field $V$. 
\end{proof}

By carefully checking the process above, the proof can be carried out in a $K$-invariant way with necessary modifications, for a maximal compact subgroup $K$ of $\text{Aut}_0(M, V)$.  Note that we only assume $\cK_X$ is bounded below on $\cH_K$, the $K$-invariant metrics. By assumption $G=\text{Aut}_0(M, V)$ is the complexification of $K$. 
Now we assume reductive-properness and we show that there exists an extremal metric. We indicate modifications required in the above arguments to solve \eqref{continuity1}. We choose $\omega$ to be $K$-invariant and 
apply  Theorem \ref{open1} and Corollary \ref{open2} in a $K$-invariant way (simply replacing $T$ by $K$) to get solutions which are $K$-invariant in an open interval $[0, t_0)$ for $t_0\leq 1$. Since $\cK_X$ is bounded below on $\cH_K$, then at any time $t\in [0, 1)$, for any $\phi\in \cH_K$, we have,
\[
t\cK_X(\phi)+(1-t)\mathbb{J}(\phi)\geq (1-t)\frac{1}{n+1}I(\phi)-C
\]
Lemma \ref{kbelow1} applies that the continuity path \eqref{continuity1} 
has a $K$-invariant solution for each $t\in [0, 1)$ by the same argument. Now we consider $t\rightarrow 1$. Let $\tilde \phi_i$ solve the equation \eqref{continuity1} at $t_i\rightarrow 1$. We  establish the following, 

\begin{lemma}\label{kenergy2}We have the following, 
\begin{equation}\label{j200}
\begin{split}
&\tilde K_{t_i}(\tilde \phi_i)=\inf_{\phi\in \cH_K} \tilde K_{t_i}(\phi)\rightarrow \inf_{\cH_K} \cK_X(\phi), t_i\rightarrow 1\\
&\cK_X(\tilde \phi_i)\rightarrow \inf_{\cH_K} \cK_X(\phi), t_i\rightarrow 1\\
&(1-t_i)\mathbb{J}(\tilde \phi_i)\rightarrow 0.
\end{split}
\end{equation}
\end{lemma}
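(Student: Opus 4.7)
The plan is to adapt the proof of Lemma \ref{kenergy1} essentially verbatim, replacing $\cH_X$ by $\cH_K$ throughout. Three ingredients must be checked: that the continuity path solution $\tilde\phi_i$ is actually $K$-invariant, that $\tilde\phi_i$ minimizes $\tilde K_{t_i}$ not only over $\cH_X$ but also over the smaller space $\cH_K$, and that $\inf_{\cH_K}\cK_X$ is finite so that near-minimizers exist. The last item is hypothesis (1) of Definition \ref{rp1}, and the first is built into the $K$-invariant openness step and the $K$-invariant analogue of Lemma \ref{kbelow1} that was carried out just above the statement.

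For the minimization, I would first observe that $K$ acts $d_1$-isometrically on $\cH$ and that the $C^{1,1}$-geodesic joining two points of $\cH$ is unique; hence a geodesic joining two $K$-invariant potentials is itself $K$-invariant, so $\cH_K$ is a totally geodesic subspace of $\cH_X$. The convexity of $\tilde K_{t_i}=t_i\cK_X+(1-t_i)\mathbb{J}$ along $C^{1,1}$-geodesics (from Theorem \ref{bdl} together with the linearity of $\mathbb{J}^X$) therefore restricts to convexity along geodesics in $\cH_K$, and since $\tilde\phi_i$ is a smooth critical point of $\tilde K_{t_i}$ it must minimize $\tilde K_{t_i}$ over $\cH_K$. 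This is the only step where one really needs to think; everything else is formal.

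Given these preparations, the argument of Lemma \ref{kenergy1} runs unchanged: for $\epsilon>0$ pick $\phi^\epsilon\in\cH_K$ with $\cK_X(\phi^\epsilon)\leq \inf_{\cH_K}\cK_X+\epsilon$, and the minimization property yields
\[
\tilde K_{t_i}(\tilde\phi_i)\leq t_i\cK_X(\phi^\epsilon)+(1-t_i)\mathbb{J}(\phi^\epsilon),
\]
whence $\limsup_i \tilde K_{t_i}(\tilde\phi_i)\leq \inf_{\cH_K}\cK_X+\epsilon$. For the lower bound, $\mathbb{J}\geq 0$ gives $t_i\cK_X(\tilde\phi_i)\leq \tilde K_{t_i}(\tilde\phi_i)$, and combined with the trivial estimate $\cK_X(\tilde\phi_i)\geq \inf_{\cH_K}\cK_X$ this sandwiches both $\tilde K_{t_i}(\tilde\phi_i)$ and $\cK_X(\tilde\phi_i)$ against $\inf_{\cH_K}\cK_X$ as $\epsilon\to 0$, establishing the first two limits in \eqref{j200}. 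The third assertion $(1-t_i)\mathbb{J}(\tilde\phi_i)\to 0$ then follows by subtraction from the identity $(1-t_i)\mathbb{J}(\tilde\phi_i)=\tilde K_{t_i}(\tilde\phi_i)-t_i\cK_X(\tilde\phi_i)$. No new a priori estimates are needed beyond what is already in the paper.
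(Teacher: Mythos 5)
Your proposal is correct and follows the same route as the paper, which simply observes that the argument of Lemma \ref{kenergy1} carries over verbatim with $\inf_{\cH_X}$ replaced by $\inf_{\cH_K}$, using the hypothesis that $\cK_X$ is bounded below on $\cH_K$. The extra care you take in verifying that $\tilde\phi_i$ is $K$-invariant and minimizes $\tilde K_{t_i}$ over $\cH_K$ (via convexity along geodesics and the totally geodesic nature of $\cH_K$) is exactly the content the paper leaves implicit, so there is no discrepancy.
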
 
Using the fact that $\cK_X$ is bounded below over $\cH_K$, the same argument then applies, with $\inf_{\cH_X}$ replaced by $\inf_{\cH_K}$. Applying reductive-properness assumption to the sequence $\tilde \phi_i$, we get the distance bound $\sup_id_{1, G}(0, \tilde \phi_i)<\infty$. There exists $\sigma_i\in G$ with $\sigma_i^*\omega_{\tilde \phi_i}=\omega_{\phi_i}$ such that $\sup_i d_1(0, \phi_i)<\infty$. Note that $\phi_i$ and $\omega_{h_i}=\sigma_i^*\omega$ would not be $K$-invariant (only $T$-invariant) in general since $K$ is not a normal subgroup of $\text{Aur}_0(M, V)$. But $\cK_X$ is invariant under the action of $G$, by Lemma \ref{ginvariant01}. Hence Proposition \ref{e1000} and Lemma \ref{l100} both hold, such that $\phi_i$  solves the equation \eqref{equation-i0}, together with the uniform bounds $\sup_i d_1(0, \phi_i)<\infty, H_\infty<\infty$. By the same argument as in Theorem \ref{con100}, we get a smooth extremal metric $\phi_*$ as the limit of $\phi_i$ (passing to subsequence). The proof for the pair $(K_0, G_0)$ is identical. 

\subsection{Existence of an extremal metric implies properness modulo $G$}\label{p1000}
The main result of this section is to prove the following regularity result, which generalizes Chen-Cheng's result \cite{CC2}[Theorem 5.1] to extremal case. 
\begin{thm}\label{regularity1}Let $\phi_*\in \cE^1_X$ be a minimizer of $\cK_X$ over $\cE^1$. Then $\phi_*$ is smooth and it defines a smooth extremal metric. 
\end{thm}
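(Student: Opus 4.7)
The plan is to adapt Chen-Cheng's regularity argument from \cite{CC2}[Section 5] (established there for the csck case) to the extremal setting. The modifications are minor because Proposition \ref{z1} gives a uniform $C^0$-bound on the holomorphic potential $\theta_X(\phi)$, so the extremal term enters the continuity-path analysis and Chen-Cheng's a priori estimates only as a harmless lower-order perturbation.

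The first step: since $\phi_*\in\cE^1_X$ is a minimizer, $\cK_X$ is bounded below by $\cK_X(\phi_*)$ on $\cH_X$. Lemma \ref{kbelow1} therefore yields smooth, $T$-invariant solutions $\tilde\phi_t\in\cH_X^0$ of the modified continuity path \eqref{continuity1} for every $t\in[0,1)$, and Lemma \ref{kenergy1} guarantees that along any $t_i\nearrow 1$ the family $\{\tilde\phi_{t_i}\}$ is a minimizing sequence for $\cK_X$ with $(1-t_i)\mathbb{J}(\tilde\phi_{t_i})\rightarrow 0$.

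The second step substitutes the presence of the minimizer $\phi_*$ for the properness hypothesis used in Theorem \ref{con100}. Using uniqueness of $\cK_X$-minimizers in $\cE^1_X$ modulo the $G$-action --- which follows, as in the csck case of Berman-Berndtsson \cite{BB}, from the convexity of $\cK_X$ along finite-energy $d_1$-geodesics (combined with the $C^0$-bound on $\theta_X$ and the linearity of $\mathbb{J}^X$ along geodesics) --- any minimizing sequence for $\cK_X$ in $\cE^1_X$ clusters, up to $d_1$-distance zero, at the $G$-orbit of $\phi_*$. One may therefore select $\sigma_i\in G$ such that $\phi_i:=\sigma_i[\tilde\phi_{t_i}]$ satisfies $d_1(\phi_i,\phi_*)\rightarrow 0$; in particular $\sup_i d_1(0,\phi_i)<\infty$. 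The potentials $\phi_i$ solve the equation of Lemma \ref{l100} with uniformly bounded entropy (Proposition \ref{e1000}, using $\cK_X(\phi_i)=\cK_X(\tilde\phi_{t_i})$ by $G$-invariance, Lemma \ref{ginvariant01}). Chen-Cheng's a priori estimates (Theorem \ref{keyestimate2}) then deliver uniform $W^{2,p}$ bounds on $\phi_i$ for every $p$, and the argument of Theorem \ref{con100} extracts a smooth limit $\phi_\infty$ defining a smooth extremal metric with extremal vector field $V$. Since $\phi_i\rightarrow \phi_\infty$ smoothly while $d_1(\phi_i,\phi_*)\rightarrow 0$, we identify $\phi_\infty=\phi_*$ in $\cE^1$, so $\phi_*$ is smooth and defines a smooth extremal metric.

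The main obstacle is the uniqueness/rigidity step used to move the continuity-path sequence close to $\phi_*$ via a $G$-action. A cleaner alternative that avoids this input would be to run the continuity path \eqref{continuity1} with reference background taken to be a smooth $d_1$-approximation $\psi_k\in\cH_X$ of $\phi_*$ (using Darvas' approximation theory): convexity of $\cK_X$ along $d_1$-geodesics connecting $\tilde\phi_t$ and $\phi_*$, combined with the variational characterization of $\phi_*$ as a minimizer, would then directly control $d_1(\tilde\phi_t,\phi_*)$ along the path, and the smoothness of $\phi_*$ would follow from the Chen-Cheng estimates without invoking uniqueness of minimizers.
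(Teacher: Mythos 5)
Your primary argument has a genuine gap at the step where you pass from the continuity-path sequence $\tilde\phi_{t_i}$ to potentials $d_1$-close to $\phi_*$. You invoke ``uniqueness of $\cK_X$-minimizers in $\cE^1_X$ modulo the $G$-action'' and claim it follows from convexity of $\cK_X$ along finite-energy geodesics as in \cite{BB}. But the Berman--Berndtsson uniqueness is for \emph{smooth} extremal metrics; extending it to minimizers in $\cE^1$ is precisely the strong uniqueness theorem of \cite{BDL2}, whose proof itself rests on the regularity of weak minimizers --- i.e.\ on the very statement you are trying to prove. The paper explicitly flags this: one \emph{could} proceed by generalizing \cite{BDL2} to the extremal case, but it deliberately avoids doing so. Moreover, even granting uniqueness modulo $G$, your inference that ``any minimizing sequence clusters, up to $d_1$-distance zero, at the $G$-orbit of $\phi_*$'' is false without an additional coercivity or compactness input: a convex, lower semicontinuous functional on an infinite-dimensional metric space can have a unique minimizer while admitting minimizing sequences that escape to infinity. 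Supplying that compactness is exactly what properness would do, and properness is not available here (it is a consequence of the theorem, not a hypothesis).

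Your closing ``cleaner alternative'' is in fact the paper's actual proof, and it is the part of your proposal that works --- but you leave it as a one-sentence sketch. To execute it: take $\phi_i\in\cH_X$ with $d_1(\phi_i,\phi_*)\to0$ \emph{and} $H(\phi_i)\to H(\phi_*)$ (the $T$-invariant approximation with convergent entropy, Lemma \ref{approx3}, is needed so that $\cK_X(\phi_i)\to\cK_X(\phi_*)$), and run the path \eqref{continuity3} twisted by $\omega_{\phi_i}$, whose associated functional is $\tilde K^i_t=t\cK_X+(1-t)\mathbb{J}_i$ with $\mathbb{J}_i$ based at $\omega_{\phi_i}$. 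Since the solution $\phi_i^t$ minimizes $\tilde K^i_t$, comparing with $\phi_i$ (which minimizes $\mathbb{J}_i$) gives $\cK_X(\phi_i^t)\le\cK_X(\phi_i)$, hence a uniform entropy bound, while comparing with $\phi_*$ (which minimizes $\cK_X$) gives $0\le\mathbb{J}_i(\phi_i^t)\le\mathbb{J}_i(\phi_*)\le C\,d_1(\phi_i,\phi_*)\to0$. The quasi-triangle inequality for the $I$-functional \cite{BBEGZ} then converts this into a uniform $d_1$-bound on $\phi_i^t$, Chen--Cheng's estimates produce smooth extremal limits $u_i$ with $I_i(u_i)\to0$, and $I(\omega_u,\omega_{\phi_*})\to0$ forces the limit $u$ to equal $\phi_*$ up to a constant --- which is how the smoothness of $\phi_*$ itself is obtained, a conclusion your primary route never actually reaches (producing a smooth extremal metric somewhere in the class is weaker than showing $\phi_*$ is smooth). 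So the correct skeleton is present in your last paragraph, but the argument you actually develop in detail is circular, and the decisive comparisons and the identification $u=\phi_*$ are missing.
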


\begin{rmk}It is a conjecture of Darvas-Rubinstein \cite{DR} that a minimizer of $\cK$-energy in $\cE^1$ is a smooth csck and it was proved by Chen-Cheng \cite{CC2}. Earlier the author, together with  Y. Zeng \cite{HZ}, proved partly Chen's conjecture that a $C^{1, 1}$ minimizer of $\cK$ is a smooth csck and it extends to extremal case.  
\end{rmk}

First we prove the following result, generalizing \cite{CC2}[Lemma 5.2]. 
\begin{lemma}Let $\phi_*\in \cE^1_X$ be a minimizer of $\cK_X$ over $\cE^1$. Then there exists a smooth extremal metric in $[\omega]$ with extremal vector field $V$.
\end{lemma}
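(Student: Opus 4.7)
\emph{Proof outline.} The strategy is to adapt the existence argument of Section 3.2, with the minimizer $\phi_*$ replacing the properness hypothesis. Since $\phi_*\in\cE^1_X$ realizes $\inf_{\cE^1_X}\cK_X$, the functional $\cK_X$ is bounded below on $\cH_X$. Lemma \ref{kbelow1} therefore provides smooth $K$-invariant solutions $\tilde\phi_i\in\cH^0_X$ to the modified continuity path \eqref{continuity1} at any sequence $t_i\nearrow 1$, and an adaptation of Lemma \ref{kenergy1} identifies $\{\tilde\phi_i\}$ as a $\cK_X$-minimizing sequence with $(1-t_i)\mathbb{J}(\tilde\phi_i)\to 0$.

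The crucial step is to bound $\{\tilde\phi_i\}$ in $d_1$ modulo the action of $G$. The plan is to approximate $\phi_*$ by smooth $K$-invariant $u_k\in\cH_K$ satisfying $d_1(u_k,\phi_*)\to 0$ and $\cK_X(u_k)\to\cK_X(\phi_*)$; such approximations exist by the $d_1$-lower semicontinuity of $\cK_X$ and density of $\cH_K\subset\cE^1_X$. Comparing $\tilde\phi_i$ with the $u_k$ along geodesics in $\cE^1_X$, using convexity of $\cK_X$ along finite energy geodesics together with the linear bounds $|\mathbb{J}^X(\phi)|,|\mathbb{J}_{-Ric}(\phi)|\leq Cd_1(0,\phi)$, one extracts automorphisms $\sigma_i\in G$ such that $\phi_i\in\cH^0_X$ defined by $\omega_{\phi_i}=\sigma_i^*\omega_{\tilde\phi_i}$ satisfies $\sup_i d_1(0,\phi_i)<\infty$. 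This is the extremal analogue of \cite{CC2}[Lemma 5.2], with the extra potential $\theta_X$ controlled uniformly by Proposition \ref{z1}.

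Once the distance bound is secured, the argument of Theorem \ref{con100} applies essentially verbatim: Lemma \ref{ginvariant01} gives a uniform entropy bound $\sup_i H(\phi_i)<\infty$ via $G$-invariance of $\cK_X$ on $\cH_K$; equation \eqref{equation-i0} holds for $\phi_i$ with uniformly bounded $\theta_i$; and Chen--Cheng's estimate (Theorem \ref{keyestimate2}), combined with Tian's $\alpha$-invariant to bound $\int e^{-pf_i}\omega^n$ and the $C^0$-estimate of \cite{chenhe12}, produces a smooth extremal limit with extremal vector field $V$. The main obstacle is securing the $d_1$-bound in the absence of properness: the minimizer $\phi_*$ serves as the essential witness preventing the minimizing sequence from escaping to infinity modulo $G$, and carrying out this comparison rigorously (controlling the energy along the geodesic segments from $\tilde\phi_i$ to suitably translated approximations of $\phi_*$) is the delicate part of the proof.
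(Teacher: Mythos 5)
Your proposal identifies the right starting point (the continuity path \eqref{continuity1}, solvable for $t\in[0,1)$ once $\cK_X$ is bounded below), but the central step --- the uniform $d_1$-bound on the solutions --- is only asserted, not proved, and the route you sketch for it is both incomplete and more complicated than necessary. You propose to bound the sequence only modulo $G$, by comparing $\tilde\phi_i$ with smooth approximations of $\phi_*$ along finite energy geodesics and then ``extracting'' automorphisms $\sigma_i\in G$; you acknowledge yourself that this comparison is the delicate part, and nothing in your outline actually produces the $\sigma_i$ or the bound $\sup_i d_1(0,\phi_i)<\infty$. That is a genuine gap.

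The point you are missing is that the hypothesis here is strictly stronger than properness modulo $G$: $\phi_*$ is a \emph{global} minimizer of $\cK_X$ over $\cE^1_X$, and this makes the distance bound essentially free, with no automorphisms at all. The solution $\phi_i$ of \eqref{continuity1} at $t_i$ (normalized by $\mathbb{I}(\phi_i)=0$) minimizes $\tilde K_{t_i}=t_i\cK_X+(1-t_i)\mathbb{J}$ over $\cE^1_X$, so
\begin{equation*}
t_i\cK_X(\phi_*)+(1-t_i)\mathbb{J}(\phi_i)\;\le\; t_i\cK_X(\phi_i)+(1-t_i)\mathbb{J}(\phi_i)\;\le\; t_i\cK_X(\phi_*)+(1-t_i)\mathbb{J}(\phi_*),
\end{equation*}
where the first inequality uses that $\phi_*$ minimizes $\cK_X$ and the second that $\phi_i$ minimizes $\tilde K_{t_i}$. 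Cancelling gives $\mathbb{J}(\phi_i)\le\mathbb{J}(\phi_*)$ uniformly in $i$. Since $\mathbb{J}$ itself is coercive on $\cH_0$ (by \eqref{ij3} together with \cite{DR}[Proposition 5.5], one has $\mathbb{J}(\phi)\ge\delta d_1(0,\phi)-C$), this yields $\sup_i d_1(0,\phi_i)<\infty$ directly. A uniform entropy bound then follows from $\cK_X(\phi_i)\le\cK_X(\phi_*)+\frac{1-t_i}{t_i}\mathbb{J}(\phi_*)$ and the $d_1$-bounds on $\mathbb{J}_{-Ric}$ and $\mathbb{J}^X$, and Theorem \ref{keyestimate1} alone gives smooth convergence to an extremal metric. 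Your appeal to Theorem \ref{keyestimate2}, Tian's $\alpha$-invariant and the pulled-back equation \eqref{equation-i0} belongs to the properness-implies-existence argument of Theorem \ref{con100}, where the sequence really must be renormalized by automorphisms and the twist form degenerates; none of that machinery is needed for this lemma.
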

\begin{proof}We want to solve the equation \eqref{continuity1} up to $t=1$. By Lemma \ref{kbelow1}, the equation has a smooth solution at $t\in [0, 1)$.  Choose a sequence $t_i\rightarrow 1$, and let $\phi_i$ be the solution at $t=t_i$, with normalization $\mathbb{I}(\phi_i)=0$. Note that $\phi_i$ minimizes $\tilde K_{t_i}$ over $\cE^1_X$, similar to \cite{CC2}[Corollary], replacing $\cK$-energy by $\cK_X$. Hence it follows that
\[
t_i\cK_X(\phi_*)+(1-t)\mathbb{J}(\phi_i)\leq t_i\cK_X(\phi_i)+(1-t)\mathbb{J}(\phi_i)\leq t_i\cK_X(\phi_*)+(1-t)\mathbb{J}(\phi_*)
\]
This implies $\mathbb{J}(\phi_i)\leq \mathbb{J}(\phi_*)$. Now $\mathbb{J}$ is proper in the sense $\mathbb{J}(\phi)\geq \delta d_1(0, \phi)-C$, for $\phi\in \cH_0$. It follows that
\[
\sup_i d_1(0, \phi_i)\leq \frac{1}{\delta}(C+\mathbb{J}(\phi_*))<\infty. 
\]
This in turn implies that, 
\[
\sup_i\int_M \log \frac{\omega_{\phi_i}^n}{\omega^n} \omega_{\phi_i}^n<\infty
\]
It then follows from Theorem \ref{keyestimate1}, that $\phi_i$ converges to a smooth function $\phi_\infty$ which defines a smooth extremal metric. 
\end{proof}

One can proceed by generalizing the strong uniqueness theorem of csck as in \cite{BDL2} to extremal case. Here we follow the proof of \cite{CC2}[Theorem 5.2] to prove Theorem \ref{regularity1}.

\begin{proof}Let $\phi_*$ be a minimizer of $\cK_X$ over $\cE^1_X$. By Lemma \ref{approx3} below, we can find a sequence $\phi_i\in \cH_X$ such that $d_1(\phi_i, \phi_*)\rightarrow 0$ and also the entropy converges, $H(\phi_i)\rightarrow H(\phi_*)$. Since $\mathbb{J}_{-Ric}, \mathbb{J}^X$ are all $d_1$ continuous, it follows that $\cK_X(\phi_i)\rightarrow \cK_X(\phi_*)$. 
We consider the following continuity path, for each $i$, 
\begin{equation}\label{continuity3}
t(R_\phi-\theta_X(\phi)-\underline{R})-(1-t)(\text{tr}_\phi \omega_{\phi_i}-n)=0. 
\end{equation}
The openness follows from Theorem \ref{open1} and Corollary \ref{open2} by taking $\alpha=\omega_{\phi_i}$. Denote the functionals $\mathbb{J}_i$, $\mathbb{I}_i, J_i, I_i$ to be the corresponding functionals with the base K\"ahler form as $\omega_{\phi_i}$. 
By definition we have
\[\mathbb{J}_i(\phi)=\mathbb{J}(\omega_{\phi_i}, \omega_\phi), I_i(\phi)=I(\omega_{\phi_i}, \omega_\phi), J_i(\phi)=J(\omega_{\phi_i}, \omega_\phi)\]
The corresponding functional for the path \eqref{continuity3} is
\[
\tilde K_t^i:=t\cK_X+(1-t)\mathbb{J}_i.
\]
Since for each $i$, $\mathbb{J}_i$ is proper, hence arguing exactly as in  Lemma \ref{kbelow1}, we can get a unique smooth solution $\phi_i^t$ to the equation \eqref{continuity3}, for $t\in [0, 1)$ with $\mathbb{I}_\omega(\phi_i^t)=0$. Note that $\phi^t_i$ minimizes $\tilde K_t^i$ over $\cH_X.$  It follows that for any $\phi\in \cE^1_X$
\begin{equation}\label{min1}
t\cK_X(\phi_i^t)+(1-t_i)\mathbb{J}_i(\phi_i^t)\leq t \cK_X(\phi)+(1-t)\mathbb{J}_i(\phi)
\end{equation}
Taking $\phi=\phi_i$ in \eqref{min1} (noting that $\phi_i$ minimizes $\mathbb{J}_i(\phi)$), we have
\begin{equation}\label{k01}
0=\mathbb{J}_i(\phi_i)\leq \mathbb{J}_i(\phi_i^t), \;\text{and hence}\; \cK_X(\phi_i^t)\leq \cK_X(\phi_i)
\end{equation}
Taking $\phi=\phi_*$ in \eqref{min1} (noting that $\phi_*$ minimizes $\cK_X$), we have
\[
\mathbb{J}_i(\phi_i^t)\leq \mathbb{J}_i(\phi_*).
\]
In other words, we have
\begin{equation}
0=\mathbb{J}_i(\phi_i)\leq \mathbb{J}_i(\phi_i^t)\leq \mathbb{J}_i(\phi_*).
\end{equation}
Hence we have, by the result of T. Darvas (see Theorem \ref{dar1}),
\[
\mathbb{J}_i(\phi_*)=(I_i-J_i)(\phi_*)\leq \frac{1}{n+1}I_i(\phi_*)\leq Cd_1(\phi_i, \phi_*)\rightarrow 0. 
\]
Note that we have,
\[
\mathbb{J}_i(\phi_i^t)=(I_i-J_i)(\phi_i^t)\geq  \frac{1}{n+1}I_i(\phi_i^t)
\]
Hence it follows that $I_i(\phi_i^t)\rightarrow 0$ when $i\rightarrow \infty$, uniform in $t$. By \cite{BBEGZ}[Theorem 1.8], we have
\[
I_\omega(\phi_i^t)\leq C_n(I_{\omega}(\phi_i)+I_i(\phi_i^t))\leq C
\]
This gives the distance bound  (see \cite{DR}[Proposition 5]), 
\[
d_1(0, \phi_i^t)\leq CI_{\omega}(\phi_i^t)+C.
\]
Together with the $\cK_X$-energy bound \eqref{k01}, this gives the uniform upper bound of the entropy $H(\phi_i^t)$, uniformly in $i$ and $t\in (0, 1)$. By Chen-Cheng's key estimate Theorem \ref{keyestimate1}, we conclude that $\phi^t_i$ converges to a smooth $u_i$ when $t\rightarrow 1$, such that $u_i\in \cH_X^0$ solves the equation
\[
R_{u_i}-\underline R-\theta_X(u_i)=0.
\]
In other words, $u_i$ defines an extremal metric for each $i$ and we have $I_i(u_i)\rightarrow 0$ when $i\rightarrow \infty$. Note that $d_1(0, u_i)$ is also uniformly bounded, this implies that by passing to a subsequence, if necessary $u_i$ converges to a smooth extremal potential $u\in \cH_X^0$. It follows that
\[
I(\omega_u, \omega_{\phi_*})\leq C \left(I_i(u_i)+I(\omega_{\phi_i}, \omega_{\phi_*})\right)\rightarrow 0. 
\] 
This implies that $u$ differs by $\phi_*$ by a constant (see \cite{CC2}[Lemma 5.7] for example). 
\end{proof}

We need an approximation in $d_1$ with convergent entropy in a $T$-invariant way, where $T$ is the compact subtorus  generated by $JV$.

\begin{lemma}\label{approx3}Given $u\in \cE^1_X$, there exists $u_k\in \cH_X$ such that $d_1(u, u_k)\rightarrow 0$ and $H_\omega(u_k)\rightarrow H_\omega(u)$, where the entropy is defined to be
$H_\omega(u)=\frac{1}{n!}\int_M \log(\frac{\omega_u^n}{\omega^n}) \omega^n$. 
\end{lemma}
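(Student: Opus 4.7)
The plan has two steps: first invoke the standard density result for $\cE^1$, then pass to $T$-invariant approximants. By the density and entropy approximation theorem of Berman-Darvas-Lu and Darvas-Di Nezza-Lu refining Theorem \ref{bdl}, there exist smooth $\phi_k \in \cH$ (not yet $T$-invariant) with $d_1(\phi_k, u) \to 0$ and $H_\omega(\phi_k) \to H_\omega(u)$.

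To produce $T$-invariant approximants I would average over $T$: set
$$u_k(x):=\int_T \phi_k(t\cdot x)\,d\mu(t),$$
where $d\mu$ is the normalized Haar measure on the compact torus $T\subset G$ generated by $JV$. Since $\omega$ is $T$-invariant and $T$ acts by biholomorphisms, each $\phi_k\circ t$ is smooth and $\omega$-psh, hence $u_k\in C^\infty(M)$ and $\omega_{u_k}=\int_T t^{*}\omega_{\phi_k}\,d\mu(t)>0$, so $u_k\in \cH_X$. For the $d_1$-convergence, $u$ being $T$-invariant gives $u\circ t = u$, so $u_k-u=\int_T (\phi_k-u)\circ t\,d\mu(t)$; combining convexity of $d_1$ (via the equivalence $d_1\asymp I_1$ in Theorem \ref{dar1}) with the fact that $\phi\mapsto \phi\circ t$ is an isometry of $(\cH,d_1)$, one gets
$$d_1(u_k,u)\le \int_T d_1(\phi_k\circ t,u)\,d\mu(t) = d_1(\phi_k,u)\to 0.$$

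The main obstacle is the entropy convergence $H_\omega(u_k)\to H_\omega(u)$. The lower bound $\liminf_k H_\omega(u_k)\ge H_\omega(u)$ comes for free from $d_1$-lower semicontinuity of $H_\omega$. The matching upper bound is the delicate point: one would like $H_\omega(u_k)\le H_\omega(\phi_k)$, but entropy is convex in the \emph{measure} $\omega_\phi^n$ rather than in the potential $\phi$ under linear averaging, so linear $T$-averaging of potentials does not directly cooperate with entropy. The cleanest remedy, and the route I would follow, is to carry out the Berman-Darvas-Lu approximation in a $T$-equivariant fashion from the start: both Demailly-type regularization (built from the exponential of a $T$-invariant background Kähler metric) and the complex Monge--Ampère solvers used to produce $\phi_k$ from $u$ preserve $T$-invariance when all the reference data are $T$-invariant. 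This already yields smooth $T$-invariant $\phi_k\in\cH_X$ with $d_1(\phi_k,u)\to 0$ and $H_\omega(\phi_k)\to H_\omega(u)$, bypassing the averaging step entirely; one takes $u_k=\phi_k$ and the lemma follows.
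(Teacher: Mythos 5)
Your final route --- discarding the potential-averaging step (correctly, for exactly the entropy reason you identify) and instead running the Berman--Darvas--Lu approximation $T$-equivariantly from the start --- is essentially the paper's proof: the paper truncates the automatically $T$-invariant density $h=\omega_u^n/\omega^n$, smooths it, and solves the Calabi--Yau equation, whose solution lies in $\cH_X$ by uniqueness. The one detail you gloss over is that a generic smoothing of the truncated density need not be $T$-invariant; the paper restores invariance by averaging the \emph{density} (not the potential) over $T$, which, unlike potential-averaging, is compatible with the entropy estimates.
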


\begin{proof}
Our argument is a modification of \cite{BDL}[Lemma 3.1]. We can suppose $H(u)<\infty$. Otherwise any $u_k\in \cH_X$ with $d_1(u_k, u)\rightarrow 0$ satisfies the requirement, since Fatou's lemma implies that $H$ is lower semi-continuous with respect to $d_1$ topology. Let $h=\frac{\omega_u^n}{\omega^n}\geq 0$. First we denote $h_k=h$ for $k^{-1}\leq h\leq k$, and $h_k=k$ for $h>k$ and $h_k=k^{-1}$ for $h<k^{-1}$.  Since $h$ is $T$-invariant, so is $h_k$. Clearly $h_k$ is a positive bounded function and $h_k\rightarrow h$ in $L^1$ and 
\[
\int_M h_k\log h_k\omega^n\rightarrow \int_M h\log h \omega^n. 
\]
We approximate $h_k$ by a smooth function $\tilde h_k$ which is bounded for each $k$ such that  $|\tilde h_k-h_k|_{L^1}<k^{-1}$  (the approximation can be taken in any $L^p$ norm for any $p<\infty$ since $h_k$ is uniformly bounded for each $k$) and
\begin{equation}\label{h01}
\left|\int_M h_k\log h_k\omega^n-\int_M \tilde h_k\log \tilde h_k \omega^n\right|<k^{-1}. 
\end{equation}
We need to choose $\tilde h_k$ such that it is $T$-invariant. Since $h_k$ and $\omega$ are both $T$-invariant, hence for any $\sigma\in T$, $\tilde h_k\circ \sigma$ is also an approximation of $h_k$ which satisfies \eqref{h01} in particular. Taking average with respect to $T$-action, that gives a $T$-invariant approximation, 
\[
h^T_k=\int_T \sigma^*(\tilde h_k) d\mu_\sigma
\] 
where $\mu$ is a $T$-invariant measure (a Haar measure) on $T$ with $\mu(T)=1$.  
Now we solve the Calabi-Yau equation
\[
\omega^n_{v_k}=\frac{h^T_k \omega^n}{\int_M h^T_k\omega^n}
\]
By the uniqueness (modulo constant) of the solution we know that $v_k\in \cH_X$. By \cite{BDL}[Theorem 2.8] we have $\tilde h\in \cH_X$ such that $d_1(v_k, h)\rightarrow 0$ (after passing to a subsequence). By \cite{D2}[Theorem 5(i)] we have $\omega_h^n=\omega_u^n$. The uniqueness \cite{GZ} then implies that $h=u$ modulo a constant. Hence we can suppose that $d_1(v_k, u)\rightarrow 0$ with the desired entropy approximation. 
\end{proof}

Lemma \ref{approx3} has an obvious generalization, for any connected compact subgroup $P$ of $\text{Aut}_0(M)$, we can have a smooth $P$-invariant approximation of $\cE^1_P$ with convergent entropy.
\begin{cor}Given $u\in \cE^1_P$, there exists $u_k\in \cH_P$ such that $d_1(u, u_k)\rightarrow 0$ and $H_\omega(u_k)\rightarrow H_\omega(u)$, where the entropy is defined to be
$H_\omega(u)=\frac{1}{n!}\int_M \log(\frac{\omega_u^n}{\omega^n}) \omega^n$. 
\end{cor}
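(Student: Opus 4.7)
The plan is to repeat the argument of Lemma \ref{approx3} verbatim, with the compact subtorus $T$ replaced by the connected compact subgroup $P$. The only three facts used about $T$ in that proof were that $T$ preserves the background metric $\omega$, that $T$ admits a bi-invariant Haar probability measure, and that the Calabi--Yau equation has a unique solution modulo constants, so $P$-invariant data yields a $P$-invariant potential. All three properties hold for any connected compact subgroup $P\subset \text{Aut}_0(M)$ (assuming, as is standard, that $\omega$ is chosen $P$-invariant, which can always be arranged by averaging).

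First I would reduce to $H_\omega(u)<\infty$; otherwise any $d_1$-approximation suffices since entropy is $d_1$-lower semicontinuous by Fatou. Then set $h=\omega_u^n/\omega^n$, which is $P$-invariant because $u$ and $\omega$ are, and truncate to $h_k$ at levels $k^{-1}$ and $k$ exactly as in the lemma. The function $h_k$ is $P$-invariant, uniformly bounded, converges to $h$ in $L^1$, and satisfies $\int_M h_k\log h_k\,\omega^n\to\int_M h\log h\,\omega^n$ by dominated convergence. Next pick a smooth bounded approximation $\tilde h_k$ with $\|\tilde h_k-h_k\|_{L^1}<k^{-1}$ and $|\int h_k\log h_k\omega^n-\int \tilde h_k\log \tilde h_k\omega^n|<k^{-1}$, and symmetrize:
\[
h_k^P(x)=\int_P \tilde h_k(\sigma\cdot x)\,d\mu_\sigma,
\]
with $\mu$ the Haar probability measure on $P$. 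Because each $\sigma^*\tilde h_k$ is another bounded smooth approximation of the $P$-invariant $h_k$ satisfying the same two estimates, the averaged $h_k^P$ is smooth, bounded, $P$-invariant, and still approximates $h_k$ in $L^1$ and in entropy by Jensen's inequality together with the $P$-invariance of $\omega$.

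Then solve the Calabi--Yau equation
\[
\omega_{v_k}^n=\frac{h_k^P\,\omega^n}{\int_M h_k^P\,\omega^n},
\]
which by Yau's theorem has a unique solution $v_k\in\cH$ modulo constants. Since the right-hand side is $P$-invariant, uniqueness forces $v_k\in\cH_P$. From here apply the exact stability arguments of Lemma \ref{approx3}: \cite{BDL}[Theorem 2.8] gives $d_1(v_k,h_\infty)\to 0$ along a subsequence for some $h_\infty\in\cE^1$, \cite{D2}[Theorem 5(i)] identifies $\omega_{h_\infty}^n=\omega_u^n$, and uniqueness in \cite{GZ} yields $h_\infty=u$ up to an additive constant. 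Adjusting by this constant produces the desired sequence $u_k\in\cH_P$ with $d_1(u_k,u)\to 0$ and $H_\omega(u_k)\to H_\omega(u)$.

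The only delicate point, really not an obstacle but worth verifying, is that averaging by $P$ preserves both smoothness and the entropy estimate \eqref{h01}; smoothness follows from the smooth compact action on $M$, while the entropy convergence is an immediate consequence of Jensen's inequality applied pointwise to $t\mapsto t\log t$ together with $P$-invariance of $\omega$, which ensures $\int_M h_k^P\,\omega^n=\int_M \tilde h_k\,\omega^n$. No new analytic input beyond Lemma \ref{approx3} is required.
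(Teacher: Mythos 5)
Your proposal is correct and is exactly what the paper intends: the paper gives no separate proof of this corollary, stating only that Lemma \ref{approx3} ``has an obvious generalization,'' and your argument is precisely that generalization, replacing $T$ by $P$ and verifying that the only properties of $T$ used (invariance of $\omega$, Haar averaging, and uniqueness for the Calabi--Yau equation) persist. Your added remark on Jensen's inequality for the averaged entropy estimate is a welcome clarification of a step the paper itself leaves implicit.
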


As a direct application, we have the following,
\begin{thm}Suppose $(M, [\omega], J)$ admits an extremal K\"ahler metric with extremal vector field $V$. Then the modified $\cK_X$ energy is $d_{1, G}$ proper, as defined in Definition \ref{rp}. In particular we get two constants $C, D>0$ such that
\[
\cK_X(\phi)\geq Cd_{1, G}(0, \phi)-D.
\]
\end{thm}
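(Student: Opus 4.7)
The plan is to adapt the Berman--Darvas--Lu arguments \cite{BDL, BDL2} to the extremal setting, using the regularity result (Theorem \ref{regularity1}) as the key new ingredient that replaces the strong uniqueness of csck metrics. Let $\phi_e\in \cH_X^0$ be the potential of the given extremal metric. By the corollary following Theorem \ref{bdl}, $\phi_e$ minimizes $\cK_X$ over $\cE^1_X$, so $\cK_X$ is bounded below and condition (1) of Definition \ref{rp} is automatic. It remains to prove the quantitative coercivity estimate $\cK_X(\phi) \geq C\,d_{1,G}(0,\phi) - D$, which in turn implies the sequential properness of condition (2).

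I would argue by contradiction. Suppose no such constants exist; then there is a sequence $\phi_j \in \cH^0_X$ with $d_{1,G}(\phi_e,\phi_j) \to \infty$ and $\cK_X(\phi_j)$ bounded above by some $M<\infty$. Using Lemma \ref{ginvariant01} ($\cK_X$ is $G$-invariant) and the fact that $G$ acts on $\cE^1_0$ by $d_1$-isometries, I replace each $\phi_j$ by a $G$-translate so that $d_1(\phi_e,\phi_j) \leq d_{1,G}(\phi_e,\phi_j) + 1$, without changing $\cK_X(\phi_j)$ and while keeping $\phi_j \in \cH^0_X$. Set $\ell_j := d_1(\phi_e,\phi_j)\to\infty$ and connect $\phi_e$ to $\phi_j$ by the unit-speed finite-energy $d_1$-geodesic $t\mapsto \phi_j(t)\in \cE^1_{0,X}$, $t\in[0,\ell_j]$, which stays in $\cE^1_X$ by geodesic convexity of $\cH_X$ inside $\cH$. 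Convexity of $\cK_X$ along this geodesic, together with minimality of $\phi_e$, gives
\[
0 \leq \frac{d^{+}\cK_X(\phi_j(t))}{dt}\bigg|_{t=0^+} \leq \frac{\cK_X(\phi_j) - \cK_X(\phi_e)}{\ell_j} \leq \frac{M - \cK_X(\phi_e)}{\ell_j} \longrightarrow 0.
\]

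Next, following the Darvas--Rubinstein compactness framework \cite{DR}, I extract a subsequential limit geodesic ray $t \mapsto u(t) \in \cE^1_{0,X}$ starting at $\phi_e$ (the $\phi_j(\cdot)$ are uniformly $1$-Lipschitz in $t$, and the slope estimate gives the necessary entropy bounds on compact $t$-intervals). By lower semicontinuity and convexity of $\cK_X$ on $\cE^1_X$ combined with the minimization property,
\[
0 \leq \lim_{t\to\infty} \frac{\cK_X(u(t)) - \cK_X(\phi_e)}{t} \leq 0,
\]
so the modified Mabuchi slope of $u$ vanishes.

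The main obstacle is the extremal analogue of the BDL rigidity theorem: \emph{a finite-energy geodesic ray $u$ from a smooth extremal $\phi_e$ along which $\cK_X$ has zero slope must be induced by a one-parameter subgroup of $G = \Aut_0(M,V)$.} The csck case is \cite{BDL2}[Theorem 1.3]. Its proof combines (i) regularity of $\cE^1$-minimizers, (ii) strong uniqueness of minimizers modulo the automorphism group, and (iii) a Duistermaat--Heckman analysis of destabilizing rays. Ingredient (i) is supplied by Theorem \ref{regularity1}; (ii) follows by combining (i) with Calabi's uniqueness of extremal metrics within $\cH_X$ up to $G$-action and Lemma \ref{ginvariant01}; and (iii) carries over after replacing the scalar curvature $R$ by the twisted quantity $R - \theta_X(\cdot)$, with the uniformly bounded correction $\theta_X$ controlled by Proposition \ref{z1}. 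Granting this rigidity, $u(t) = \sigma_t[\phi_e]$ with $\{\sigma_t\}\subset G$, so $d_{1,G}(\phi_e, u(t)) = 0$ for every $t$. For any fixed $T > 0$ and $j$ large, $d_{1,G}(\phi_e, \phi_j(T)) < \varepsilon$, hence
\[
d_{1,G}(\phi_e, \phi_j) \leq d_{1,G}(\phi_e,\phi_j(T)) + d_1(\phi_j(T), \phi_j) < \varepsilon + (\ell_j - T).
\]
Combined with the normalization $\ell_j \leq d_{1,G}(\phi_e,\phi_j) + 1$, this forces $T \leq \varepsilon + 1$, contradicting the arbitrariness of $T$. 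The quantitative form $\cK_X(\phi) \geq C d_{1,G}(0,\phi) - D$ then follows by a standard sharpening of the above argument applied to near-infimizing sequences of $\cK_X(\phi)/d_{1,G}(0,\phi)$.
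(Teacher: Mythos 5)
Your overall strategy (contradiction via convexity, compactness, lower semicontinuity, Theorem \ref{regularity1}, and uniqueness of extremal metrics modulo $G$) uses the same ingredients as the paper, but you route the argument through geodesic \emph{rays}, whereas the paper never leaves a bounded region: it connects $0$ (taken to be the extremal potential) to $\phi_i$ by a unit-speed geodesic, looks only at the point $u_i$ at distance $1$, uses convexity to get $\cK_X(u_i)\to\inf\cK_X$, extracts a $d_1$-limit $u$ which is a minimizer, invokes Theorem \ref{regularity1} to conclude $u$ is a smooth extremal metric, and then contradicts $d_{1,G}(0,u)=1$ against the transitivity of $G$ on extremal metrics \cite{BB, CLP}. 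That version is shorter and avoids any discussion of asymptotic slopes or limit rays.

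The genuine gap in your write-up is the ``extremal analogue of the BDL rigidity theorem,'' which is the crux of your contradiction and which you only assert, claiming that a Duistermaat--Heckman analysis of destabilizing rays ``carries over.'' That is not a proof, and it also overstates what is needed: since your ray $u(t)$ emanates from the minimizer $\phi_e$ and $\cK_X\circ u$ is convex, nonnegative relative to $\cK_X(\phi_e)$, and has vanishing asymptotic slope, the difference quotient $t^{-1}\bigl(\cK_X(u(t))-\cK_X(\phi_e)\bigr)$ is nondecreasing, nonnegative, and tends to $0$, hence is identically zero; so \emph{every} $u(t)$ is a minimizer of $\cK_X$ over $\cE^1_X$, hence smooth extremal by Theorem \ref{regularity1}, hence of the form $\sigma_t[\phi_e]$ by uniqueness of extremal metrics modulo $G$. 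No destabilizing-ray or Duistermaat--Heckman input is required, and indeed once you see this you do not need the limit ray at all --- applying the same reasoning to the single point $\phi_j(1)$ reproduces the paper's argument. A second, smaller gap: your final sentence derives the quantitative bound $\cK_X(\phi)\geq C\,d_{1,G}(0,\phi)-D$ by ``a standard sharpening,'' which should be spelled out (the paper does so by rerunning the contradiction for sequences with $\cK_X(\phi_i)/d_1(0,\phi_i)\to 0$).
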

\begin{proof}We argue that $\cK_X$ is $d_{1, G}$ proper in $\cE^1_X$.  We choose the base metric $\omega$ to be an extremal metric. Suppose otherwise, there exists $\phi_i\in \cE^1_{0, X}$ such that $\cK_X(\phi_i)\leq C$, but $d_{1}(0, \sigma[\phi_i])\rightarrow \infty$ for any $\sigma\in G$.  Connecting $0$ and $ \phi_i$ by a $d_1$ geodesic with unit speed (which is not unique in general \cite{D2}). Such a geodesic exists by Darvas's result \cite{D2}. Consider the point $u_i$ along the geodesic such that $d_1(0, u_i)=1$. The convexity of $\cK_X$ along the $d_1$ geodesic \cite{BDL} implies that 
\begin{equation}\label{p10}
\frac{\cK_X(u_i)}{d_1(0, u_i)}\leq \frac{\cK_X(\phi_i)}{d_1(0, \phi_i)}\rightarrow 0. 
\end{equation}
Given $\cK_X(u_i)\rightarrow 0$ and $d_1(0, u_i)=1$, the compactness result (which relies on results of \cite{D1, D2} and \cite{BBEGZ}) implies that $u_i$ converges to $u$ in $d_1$-topology (by subsequence), and $\cK_X(u)=0$ (lower semicontinuity). It follows that $u$ is a smooth extremal metric with extremal vector field by Theorem \ref{regularity1}. 

For $\tilde \phi_i\in \cE^1_{0, X}$ with $\tilde \phi_i=\sigma [\phi_i]$ for some $\sigma\in G$, such that $d_1(0, \tilde \phi_i)\rightarrow \infty$. Since $\cK_X$ is $G$-invariant, $\cK_X(\tilde \phi_i)\leq C$. The discussion above then applies to $\tilde \phi_i$. Connecting $0$ and $\tilde \phi_i$ by the geodesic with $\tilde u_i=\sigma[u_i]$, where $\tilde u_i$ is the point along the geodesic such that $d_1(0, \tilde u_i)=1$. By the discussion above, we get that $d_1(0, \tilde u)=1$ and $\tilde u=\sigma [u]$ such that $\cK_X(\tilde u)=0$.  Since this discussion holds for any $\sigma\in G$, this implies that $d_{1, G}(0, u)=1$. This contradicts the fact that $G$ acts transitively on extremal metrics with the extremal vector field $V$ \cite{BB, CLP}. By checking the proof more carefully,  when $d(x, \phi_i)\rightarrow \infty$, if we have $\frac{\cK_X(\phi_i)}{d_1(0, \phi_i)}\rightarrow 0,$
then \eqref{p10} still applies to get that $\cK_X(u)=0$, and it leads to contradiction. Hence, we can get constants $C, D>0$ such that, 
\[
\cK_X(\phi)\geq Cd_{1, G}(0, \phi)-D.
\]
\end{proof}

\begin{rmk}
Reductive properness and reduced properness are weaker than $G$-properness a priori. With the properness, one should be able to prove relative $K$-polystability if an extremal metric exists, in view of \cite{BDL2}. 
\end{rmk}

\end{document}